\numberwithin{equation}{section}
\numberwithin{figure}{section}
\numberwithin{equation}{section} %% Comment out for sequentially-numbered
\numberwithin{figure}{section} %% Comment out for sequentially-numbered
\theoremstyle{plain}
\newtheorem{thm}{Theorem}[section]
  \theoremstyle{definition}
  \newtheorem{defn}[thm]{Definition}
  \theoremstyle{plain}
  \newtheorem{lem}[thm]{Lemma}
 \theoremstyle{definition}
  \newtheorem{example}[thm]{Example}
  \theoremstyle{remark}
  \newtheorem{rem}[thm]{Remark}
  \theoremstyle{plain}
  \newtheorem{prop}[thm]{Proposition}
  \theoremstyle{plain}
  \newtheorem{cor}[thm]{Corollary}
\begin{document}

\title{Stability of Anosov Hamiltonian Structures}

\author{Will J. Merry and Gabriel P. Paternain}
\begin{abstract}
Let $(M^{n},g)$ denote a closed Riemannian manifold ($n\geq3$) which
admits a metric of negative curvature (not necessarily equal to $g$).
Let $\omega_{1}:=\omega_{0}+\pi^{*}\sigma$ denote a twisted symplectic
form on $TM$, where $\sigma\in\Omega^{2}\left(M\right)$ is a closed
$2$-form and $\omega_{0}$ is the symplectic structure on $TM$ obtained
by pulling back the canonical symplectic form $dx\wedge dp$ on $T^{*}M$
via the Riemannian metric. Let $\Sigma_{k}$ be the hypersurface $|v|=\sqrt{2k}$.
We prove that if $n$ is odd and the Hamiltonian structure $(\Sigma_{k},\omega_{1})$
is Anosov with $C^{1}$ weak bundles then $(\Sigma_{k},\omega_{1})$
is stable if and only if it is contact. If $n$ is even and in addition
the Hamiltonian structure is $1/2$-pinched, then the same conclusion
holds. As a corollary we deduce that if $g$ is negatively curved,
strictly $1/4$-pinched and $\sigma$ is not exact then the Hamiltonian
structure $(\Sigma_{k},\omega_{1})$ is never stable for all sufficiently
large $k$. 
\end{abstract}

\address{Department of Pure Mathematics and Mathematical Statistics, University
of Cambridge, Cambridge CB3 0WB, England}

\email{\texttt{w.merry@dpmms.cam.ac.uk, g.p.paternain@dpmms.cam.ac.uk}}

\maketitle

\section{Introduction}

Let $\Sigma$ be a closed oriented manifold of dimension $2n-1$.
A \emph{Hamiltonian structure} on $\Sigma$ is a closed $2$-form
$\omega$ such that $\omega^{n-1}\ne0$. Its kernel $\ker\,\omega$
defines an orientable $1$-dimensional foliation.

A natural condition to impose on a Hamiltonian structure is \emph{stability};
this asserts the existence of a $1$-form $\lambda$ such that $\ker\,\omega\subseteq\ker\, d\lambda$
and such that $\lambda\wedge\omega^{n-1}>0$. The $1$-form $\lambda$
is known as a \emph{stabilizing $1$-form}.

A stronger condition one might like to impose is the following: $\left(\Sigma,\omega\right)$
is called of \emph{contact type} if we can find a $1$-form $\lambda$
such that $d\lambda=\omega$ and $\lambda\wedge\omega^{n-1}>0$. In
particular $\lambda$ is a stabilizing $1$-form and $\lambda$ is
a contact form on $\Sigma$. Note that $\left(\Sigma,\omega\right)$
can be of contact type only if $\omega$ is exact. The stability condition
first appeared in \cite{HoferZehnder1994} as a condition for which
the Weinstein conjecture could be proved. More recently, stability
has been recognized as a key condition to produce compactness results
in Symplectic Field Theory \cite{BourgeoisEliashbergHoferWysockiZehnder2003,CieliebakMohnke2005,EliashbergKimPolterovich2006}
and Rabinowitz Floer homology \cite{CieliebakFrauenfelderPaternain2009}.
This paper is motivated by the desire to generalize a result in the
latter reference, as we explain below.

Let $F$ be any vector field spanning $\ker\,\omega$. We say that
$\left(\Sigma,\omega\right)$ is an \emph{Anosov} Hamiltonian structure
if the flow $\phi_{t}$ of $F$ is Anosov. Recall that this asserts
the existence of a $d\phi_{t}$-invariant splitting \[
T\Sigma=\mathbb{R}F\oplus E^{s}\oplus E^{u},\]
 where $\mathbb{R}F$ is the $1$-dimensional distribution spanned
by $F$, and such that there exist constants $C,\mu>0$ such that
for all $x\in\Sigma$ and $t\geq0$,\[
\left|d_{x}\phi_{t}(\xi)\right|\leq C\left|\xi\right|e^{-\mu t}\ \mbox{for }\xi\in E^{s}(x);\]
 \[
\left|d_{x}\phi_{-t}\left(\xi\right)\right|\leq C\left|\xi\right|e^{-\mu t}\ \mbox{for }\xi\in E^{u}(x).\]
 The Anosov condition is invariant under time changes, and so is independent
of the choice of vector field $F$. In other words, it is intrinsic
to the Hamiltonian structure $(\Sigma,\omega)$. The \textit{weak
bundles} $E^{+}:=\mathbb{R}F\oplus E^{s}$ and $E^{-}:=\mathbb{R}F\oplus E^{u}$
are also invariant under time changes. \\

We say that $\phi_{t}:\Sigma\rightarrow\Sigma$ is $1/2$-\textit{pinched}
(or $1$\emph{-bunched} \cite{Hasselblatt1994a}) if there exist positive
constants $C,A,a$ with $A<2a$ such that

\[
\frac{1}{C}|\xi|e^{-At}\leq|d_{x}\phi_{t}(\xi)|\leq C|\xi|e^{-at}\;\;\mbox{{\rm for}}\;\xi\in E^{s}\;\;\mbox{{\rm and}}\; t\geq0,\]
 \[
\frac{1}{C}|\xi|e^{-At}\leq|d_{x}\phi_{-t}(\xi)|\leq C|\xi|e^{-at}\;\;\mbox{{\rm for}}\;\xi\in E^{u}\;\;\mbox{{\rm and}}\; t\geq0.\]

We say that an Anosov Hamiltonian structure satisfies the $1/2$-pinching
condition if there exists \emph{some} vector field $F$ spanning $\ker\,\omega$
whose flow satisfies the $1/2$-pinching condition.\\

Here is the situation we are actually interested in. Let $(M,g)$
be a closed Riemannian manifold that admits a background metric of
negative curvature (possibly different from $g$) and $\pi:TM\rightarrow M$
the tangent bundle. Throughout the paper we let $\omega_{0}$ denote
the symplectic form on $TM$ obtained by pulling back the canonical
symplectic form $dx\wedge dp$ on $T^{*}M$ via the Riemannian metric.
The form $\omega_{0}$ is exact; if $\alpha\in\Omega^{1}(TM)$ denotes
the $1$-form defined by \begin{equation}
\alpha_{v}(\xi)=\left\langle d_{v}\pi(\xi),v\right\rangle ,\label{eq:alpha}\end{equation}
 then it is well known that $\omega_{0}=-d\alpha$. Suppose $\sigma\in\Omega^{2}\left(M\right)$
is a closed $2$-form on $M$. Given $\varepsilon\in\mathbb{R}$ we
define \[
\omega_{\varepsilon}:=\omega+\varepsilon\pi^{*}\sigma.\]
 Let $F_{\varepsilon}$ denote the symplectic gradient of the Hamiltonian
\[
H(x,v)=\frac{1}{2}\left|v\right|^{2}\]
 with respect to $\omega_{\varepsilon}$ and let $\phi_{t}^{\varepsilon}$
denote the flow of $F_{\varepsilon}$ with respect to $\omega_{\varepsilon}$.
Note that $\phi_{t}^{0}$ is simply the geodesic flow. This flow models
the motion of a particle of unit mass and charge $\varepsilon$ under
the effect of a magnetic field, whose \emph{Lorentz force} $Y:TM\rightarrow TM$
is the bundle map uniquely determined by \begin{equation}
\sigma_{x}(u,v)=\left\langle Y_{x}(u),v\right\rangle \label{eq:lorentz}\end{equation}
 for all $u,v\in T_{x}M$ and all $x\in M$.

The family $\left\{ \omega_{\varepsilon}\right\} $ for $\varepsilon\in\left[0,1\right]$
interpolates between the standard symplectic form $\omega_{0}$ and
the form $\omega_{1}$. The form $\omega_{1}$ is called a \emph{twisted
symplectic structure} \cite{ArnoldGivental1990} and the flow $\phi_{t}^{1}$
is called a \emph{twisted geodesic flow} or a \emph{magnetic flow}.

Let $\Sigma_{k}=H^{-1}(k)$. We are interested in Anosov Hamiltonian
structures of the form $(\Sigma_{k},\omega_{1})$. \\

Here is the main result we present. Let $n=\dim\, M$. \\

\noindent \textbf{Theorem A.} \emph{Suppose $(\Sigma_{k},\omega_{1})$
is an Anosov Hamiltonian structure and $n\geq3$. Assume in addition:}
\begin{itemize}
\item \emph{If $n$ is odd, $(\Sigma_{k},\omega_{1})$ has weak bundles
of class $C^{1}$;} 
\item \emph{If $n$ is even, $(\Sigma_{k},\omega_{1})$ is $1/2$-pinched.}
\end{itemize}
\emph{Then $(\Sigma_{k},\omega_{1})$ is stable if and only if it
is of contact type. In particular, if $(\Sigma_{k},\omega_{1})$ is
stable, then $\sigma$ must be exact.}\\

The last statement in the theorem can be seen as follows. Since $n=\dim\, M\geq3$,
the Gysin sequence of the sphere bundle $\pi|\Sigma_{k}:\Sigma_{k}\rightarrow M$
shows that $(\pi|\Sigma_{k})^{*}:H^{2}(M,\mathbb{R})\rightarrow H^{2}(\Sigma_{k},\mathbb{R})$
is an isomorphism for $n\geq4$ and injective for $n=3$. Since $\omega_{1}=-d\alpha+\pi^{*}\sigma$,
the assertion that $\omega_{1}$ is exact on $\Sigma_{k}$ implies
that $\pi^{*}\sigma|\Sigma_{k}$ is exact. Putting this together we
conclude that $\sigma$ is exact.

The bunching condition is a necessary one in the even dimensional
case. Indeed, consider the twisted geodesic flow $\phi_{t}^{1}$ on
compact quotients of complex hyperbolic space with $\sigma$ given
by the K\"ahler form. Then for $k$ sufficiently large, $\phi_{t}^{1}|\Sigma_{k}$
is Anosov, and $\Sigma_{k}$ is stable but not contact ($\sigma$
is not exact). The flow $\phi_{t}^{1}$ is algebraic and thus the
stable and unstable bundles are real analytic. A stabilizing 1-form
$\lambda$ can be defined by setting $\lambda(F_{1})=1$ and $\ker\,\lambda=E_{1}^{s}\oplus E_{1}^{u}$.
The flow $\phi_{t}^{1}$ is not $1/2$-pinched since it has 2:1 resonances.
It seems a reasonable conjecture that these are in fact the only cases
where $C^{1}$ weak bundles is not sufficient to ensure that the conclusion
of Theorem A holds. A well known theorem \cite{HirschPughShub1977}
states that the $1/2$-pinching condition implies that the weak bundles
are of class $C^{1}$. However the pinching condition is strictly
stronger than requiring the weak bundles to be of class $C^{1}$ as
the example of complex hyperbolic space described above shows.\\

We will prove in Proposition \ref{pro:ANOSOV 12PINCHED} that if $g$
is negatively curved and strictly $1/4$-pinched, for $k$ sufficiently
large, the flow $\phi_{t}^{1}:\Sigma_{k}\rightarrow\Sigma_{k}$ is
Anosov and $1/2$-pinched. Thus, as a corollary of Theorem A we obtain
the following.\\

\noindent \textbf{Corollary B.} \emph{Suppose $n\geq3$ and $g$ is
negatively curved and strictly $1/4$-pinched. Then for any $k$ sufficiently
large, the hypersurface $\Sigma_{k}\subset TM$ is} \textbf{\emph{not}}
\emph{stable if $\sigma$ is not exact.}\\

The corollary was first proved in \cite[Theorem 1.4]{CieliebakFrauenfelderPaternain2009}
for the case of $n$ even (but not Theorem A) and this previous result
was the motivation for the present paper. It shows that the stability
condition may fail for whole intervals of energy levels.

A caveat about the word {}``stable'' is in order. One of the most
remarkable features about Anosov systems is that they are structurally
stable. This means that nearby systems are orbitally equivalent via
a \textit{homeomorphism} which in general is \textit{not} $C^{1}$.
The stability condition in Symplectic Geometry is equivalent to the
existence of a thickening of the hypersurface with smoothly conjugate
characteristic foliations \cite[Lemma 2.3]{CieliebakMohnke2005}.
In some sense it is this additional smoothness that is being exploited
in Theorem A. One can put this into different words as follows: The
existence of the form 1-form $\lambda$ means that one can find a
parametrization of the characteristic foliation such that the hyperplane
bundle $E^{s}\oplus E^{u}$ is smooth. In general, $E^{s}\oplus E^{u}$
is only H\"older continuous.

The assumption that $M$ admits a background metric of negative curvature
is most likely superfluous. We use it to construct a conjugacy between
our flow and the geodesic flow of a metric of negative curvature on
$M$. We use this conjugacy to show that the fundamental group $\pi_{1}(M)$
acts as a `North-South dynamics' (see Section \ref{sec:North-South-Dynamics}
and in particular Theorem \ref{thm:north south dynamic}) on the space
of stable leaves on the universal covering, and that the space of
leaves admit a `flip map' (see the discussion before the proof of
Theorem \ref{strong}). In order to remove the assumption of negative
curvature one would need to prove Theorem \ref{thm:north south dynamic}
and Theorem \ref{strong} directly instead of constructing such a
conjugacy.\newline

\emph{Acknowlegment}: We thank the referee for Remark \ref{Referee remark}.

\section{\label{sec:Preliminaries-on-Hamiltonia}Preliminaries on Hamiltonian
structures, holonomy and the Kanai connection}

\begin{defn} A \emph{Hamiltonian structure} is a pair $\left(\Sigma,\omega\right)$
where $\Sigma^{2n-1}$ is a closed oriented manifold and $\omega$
is a closed $2$-form on $\Sigma$ such that $\omega^{n-1}\ne0$ everywhere.
\end{defn} If $\left(\Sigma,\omega\right)$ is a Hamiltonian structure
then $\ker\,\omega$ defines an orientable $1$-dimensional foliation
of $\Sigma$, which we call the \emph{characteristic foliation}. \begin{defn}
Let $\phi_{t}:N\rightarrow N$ be a smooth flow on a closed Riemannian
manifold $N$, and let $F$ denote its infinitesimal generator. We
say that $\phi_{t}$ is \emph{Anosov} if there exists a $d\phi_{t}$-invariant
splitting \[
T\Sigma=\mathbb{R}F\oplus E^{s}\oplus E^{u},\]
 where $\mathbb{R}F$ is the $1$-dimensional distribution spanned
by $F$, and such that there exist constants $C,\mu>0$ such that
for all $x\in\Sigma$ and $t\geq0$,\[
\left|d_{x}\phi_{t}(\xi)\right|\leq C\left|\xi\right|e^{-\mu t}\ \mbox{for }\xi\in E^{s}(x);\]
 \[
\left|d_{x}\phi_{-t}\left(\xi\right)\right|\leq C\left|\xi\right|e^{-\mu t}\ \mbox{for }\xi\in E^{u}(x).\]

\end{defn} The Anosov property is invariant under time changes (see
\cite[Lemma 1.2]{deLaLlaveMarcoMoryion1986} or \cite[Proposition 17.4.5]{KatokHasselblatt1995}),
and hence we can define a Hamiltonian structure $\left(\Sigma,\omega\right)$
to be \emph{Anosov} if the flow $\phi_{t}$ of \emph{any} vector field
$F$ spanning $\ker\,\omega$ is Anosov.

We say that $\phi_{t}:\Sigma\rightarrow\Sigma$ is $1/2$-\textit{pinched}
(or $1$\emph{-bunched} \cite{Hasselblatt1994a}) if there exist positive
constants $C,A,a$ with $A<2a$ such that

\[
\frac{1}{C}|\xi|e^{-At}\leq|d_{x}\phi_{t}(\xi)|\leq C|\xi|e^{-at}\;\;\mbox{{\rm for}}\;\xi\in E^{s}\;\;\mbox{{\rm and}}\; t\geq0,\]
 \[
\frac{1}{C}|\xi|e^{-At}\leq|d_{x}\phi_{-t}(\xi)|\leq C|\xi|e^{-at}\;\;\mbox{{\rm for}}\;\xi\in E^{u}\;\;\mbox{{\rm and}}\; t\geq0.\]

We say that an Anosov Hamiltonian structure satisfies the $1/2$-pinching
condition if there exists \emph{some} vector field $F$ spanning $\mbox{{\rm ker}}\,\omega$
whose flow satisfies the $1/2$-pinching condition.\\

The $1/2$-pinching condition is a natural one to study, and should
be thought of as a statement about being `strongly hyperbolic'. Unsurprisingly,
an Anosov system possessing this enhanced degree of hyperbolicity
enjoys greater regularity. More specifically, write $E^{+}$ and $E^{-}$
for the \emph{weak stable} and \emph{weak unstable} bundles $E^{s}\oplus\mathbb{R}F$
and $E^{u}\oplus\mathbb{R}F$ respectively. If an Anosov Hamiltonian
structure is $1/2$-pinched, then $E^{+}$ and $E^{-}$ are of class
$C^{1}$ \cite{HirschPughShub1977}. The next example will be crucial
for the proof of Corollary B. \begin{example} \label{exa:example sec curv}Let
$(M,g)$ be a closed manifold with negative sectional curvature $K$.
Then the geodesic flow $\phi_{t}:SM\rightarrow SM$ is Anosov (see
\cite[Section 17.6]{KatokHasselblatt1995}). In fact, we can say more.
By compactness we can find constants $k_{1}\geq k_{0}>0$ such that
\[
-k_{1}^{2}\leq K\leq-k_{0}^{2}.\]
 Then, comparison theorems show that \cite[Theorem 3.2.17]{Klingenberg1995}
(see also \cite[Proposition 3.2]{Knieper2002}) there is a constant
$C>0$ such that \begin{equation}
\frac{1}{C}|\xi|e^{-k_{1}t}\leq|d_{v}\phi_{t}(\xi)|\leq C|\xi|e^{-k_{0}t}\;\;\mbox{{\rm for}}\;\xi\in E^{s}(v)\;\;\mbox{{\rm and}}\; t\geq0,\label{eq:a1}\end{equation}
 \begin{equation}
\frac{1}{C}|\xi|e^{-k_{1}t}\leq|d_{v}\phi_{-t}(\xi)|\leq C|\xi|e^{-k_{0}t}\;\;\mbox{{\rm for}}\;\xi\in E^{u}(v)\;\;\mbox{{\rm and}}\; t\geq0.\label{eq:a2}\end{equation}
 We see that $\phi_{t}$ is $1/2$-pinched as long as $k_{1}<2k_{0}$.
Therefore the geodesic flow of a metric whose sectional curvature
satisfies $-4\leq K<-1$ is $1/2$-pinched.\end{example}

We return to two more definitions regarding Hamiltonian structures.
\begin{defn} Let $\left(\Sigma,\omega\right)$ denote a Hamiltonian
structure. We say that $\left(\Sigma,\omega\right)$ is \emph{stable}
if there exists a $1$-form $\lambda$ such that $\ker\,\omega\subseteq\ker\, d\lambda$
and $\lambda\wedge\omega^{n-1}>0$. $\lambda$ is called a \emph{stabilizing
$1$-form}. Note that if $\lambda$ is a stabilizing $1$-form and
$F$ is any vector field tangent to $\ker\,\omega$ we have $i_{F}d\lambda=0$.
If $F$ is normalized so that $\lambda(F)=1$ then we say $F$ is
the \emph{Reeb vector field} of \textbf{$\lambda$}; note that the
Reeb vector field is unique\textbf{.} \end{defn} A stronger condition
is the following. \begin{defn} Let $\left(\Sigma,\omega\right)$
denote a Hamiltonian structure. We say that $\left(\Sigma,\omega\right)$
is of \emph{contact type} if there exists a $1$-form $\lambda$ such
that $d\lambda=\omega$ and $\lambda\wedge\omega^{n-1}>0$. Note if
$\left(\Sigma,\omega\right)$ is of contact type then it is certainly
stable, and that if $\left(\Sigma,\omega\right)$ is of contact type
then $\omega$ is exact. \end{defn} Now let $\left(\Sigma,\omega\right)$
denote a stable Anosov Hamiltonian structure with weak bundles of
class $C^{1}$. Let $\lambda$ be a stabilizing $1$-form and let
$F$ be the Reeb vector field of $\lambda$. Let $\phi_{t}$ denote
the flow of $F$; then $\phi_{t}$ is Anosov and $T\Sigma=\mathbb{R}F\oplus E^{s}\oplus E^{u}$,
with $\ker\,\lambda=E^{s}\oplus E^{u}$. Then the weak bundles are
$C^{1}$; since $\lambda$ is $C^{\infty}$ (and so the bundle $\ker\,\lambda$
is of class $C^{\infty}$), it follows that the strong bundles $E^{s}$
and $E^{u}$ are also of class $C^{1}$. The importance of this is
that, as we shall see in Theorem \ref{thm:(the-Kanai-connection)}
below, under these conditions there exists a unique connection $\nabla$
on $\Sigma$ called the \emph{Kanai connection} which satisfies certain
desirable properties. This was originally introduced by Kanai in \cite{Kanai1988};
see also \cite{Kanai1993,FeresKatok1990}.

\subsection*{\label{sec:holonomy}Holonomy}

We briefly recall the concept of holonomy transport along the weak
(un)stable foliations defined by an Anosov flow.

Throughout the remainder of this section, let $N$ denote a closed
manifold of dimension $2n-1$ and $\phi_{t}:N\rightarrow N$ an Anosov
flow on $N$ with infinitesimal generator $F$. We also assume that
$E:=E^{s}\oplus E^{u}$ is smooth and admits a smooth symplectic form
$\omega$ which is $\phi_{t}$-invariant. We extend this form $\omega$
to a form defined on all of $TN$ by requiring that $i_{F}\omega=0$.
As above $\lambda$ is the 1-form defined by $\ker\,\lambda=E$ and
$\lambda(F)=1$.

It is well known that the subbundles $E^{s}$ and $E^{u}$, together
with the weak bundles $E^{+}$ and $E^{-}$, are all integrable. Namely,
given any $x\in N$, we define \[
W^{s}(x):=\left\{ y\in N\,:\,\mbox{dist}(\phi_{t}x,\phi_{t}y)\rightarrow0\mbox{ as }t\rightarrow\infty\right\} \]
 and \[
W^{u}(x):=\left\{ y\in N\,:\,\mbox{dist}(\phi_{t}x,\phi_{t}y)\rightarrow0\mbox{ as }t\rightarrow-\infty\right\} .\]
 The sets $W^{s}(x)$ and $W^{u}(x)$ are injectively immersed manifolds
called the \emph{strong (un)stable manifolds} at $x$ and satisfy\[
T_{x}W^{s}(x)=E^{s}(x),\ \ \ T_{x}W^{u}(x)=E^{u}(x).\]
 These define foliations $\mathcal{W}^{s}$ and $\mathcal{W}^{u}$
of $N$, called the \emph{strong (un)stable foliations}. We assume
throughout this section that these foliations $\mathcal{W}^{s},\mathcal{W}^{u}$
are of class $C^{1}$.

Similarly, given $x\in N$, we define \[
W^{+}(x):=\bigcup_{t\in\mathbb{R}}\phi_{t}[W^{s}(x)]=\bigcup_{t\in\mathbb{R}}W^{s}(\phi_{t}x)\]
 and \[
W^{-}(x):=\bigcup_{t\in\mathbb{R}}\phi_{t}[W^{u}(x)]=\bigcup_{t\in\mathbb{R}}W^{u}(\phi_{t}x),\]
 which are then the \emph{weak (un)stable manifolds} at $x$. They
satisfy\[
T_{x}W^{+}(x)=E^{+}(x),\ \ \ T_{x}W^{-}(x)=E^{-}(x),\]
 and define foliations $\mathcal{W}^{+}$ and $\mathcal{W}^{-}$ of
$M$.\\

Consider the foliations $\mathcal{W}^{u}$ and $\mathcal{W}^{+}$.
It is well known that these are transverse to each other (i.e. $E^{u}(x)\cap E^{+}(x)=\{0\}$)
and have complementary dimensions $n-1$ and $n$ respectively. The
same is of course true of the foliations $\mathcal{W}^{s}$ and $\mathcal{W}^{-}$,
and all of what we say below can be repeated for them.

By a \emph{foliation chart} we mean a diffeomorphism $\varphi:U\rightarrow(-1,1)^{n-1}\times(-1,1)^{n}$
of the form $x\mapsto(\varphi_{1}(x),\varphi_{2}(x))$ where $U\subseteq N$
is open, such that the connected components of $\mathcal{W}^{u}|U$
are given by $\varphi_{2}=\mbox{const}$ and the connected components
of $\mathcal{W}^{+}|U$ are given by $\varphi_{1}=\mbox{const}$.
We then call $U$ a \emph{foliated neighborhood}.

Let $ $$(\varphi,U)$ denote a foliated chart defined on $N$. Given
$x\in U$, let $W_{U}^{u}(x):=W^{u}(x)\cap U$. Suppose $y\in W_{U}^{+}(x)$
lies in the same connected component of $W_{U}^{+}(x)$ as $x$. We
want to define the \emph{holonomy map $\mathcal{H}_{x,y}^{U}:W_{U}^{u}(x)\rightarrow W_{U}^{u}(y)$
along the leaves of $\mathcal{W}^{+}$} . This is defined as follows.
Suppose $p\in W_{U}^{u}(x)$. Then there exists a unique point $q\in W^{+}(p)\cap W_{U}^{u}(y)$,
and we define $\mathcal{H}_{x,y}^{U}(p)=q$. The map $\mathcal{H}_{x,y}^{U}$
is of class $C^{r}$ for $r\geq1$ if the foliations $\mathcal{W}^{u}$
and $\mathcal{W}^{+}$ are also of class $C^{r}$.

More generally, suppose $\gamma:[0,T]\rightarrow N$ is a smooth curve
such that $\gamma(0)=x$ and $\gamma(t)\in W^{+}(x)$ for all $t$.
Let $y:=\gamma(T)\in W^{+}(x)$. Then we can define $\mathcal{H}_{x,y}^{\gamma}$
by covering the image of $\gamma$ with foliated charts $(\varphi_{i},U_{i})$
for $i=1,\dots,l$ with $x\in U_{1}$ and $y\in U_{l}$, and choosing
points $0=t_{0},\dots,t_{l}=T$ such that $\gamma(t_{i-1}),\gamma(t_{i})\in U_{i}$
for each $1\leq i\leq l$ and then setting \[
\mathcal{H}_{x,y}^{\gamma}:=\mathcal{H}_{\gamma(t_{l-1}),y}^{U_{l}}\circ\mathcal{H}_{\gamma(t_{l-2}),\gamma(t_{l-1})}^{U_{l-1}}\circ\dots\circ\mathcal{H}_{x,\gamma(t_{1})}^{U_{1}}.\]
 It can be shown that $\mathcal{H}_{x,y}^{\gamma}$ only depends on
the homotopy class of $\gamma\in W^{+}(x)$. One can check that if
$x,y,z$ are in the image of $\gamma$, then after suitably restricting
the domains of definition, it holds that\[
\mathcal{H}_{y,z}^{\gamma}\circ\mathcal{H}_{x,y}^{\gamma}=\mathcal{H}_{x,z}^{\gamma}.\]
 Moreover, since the foliations are $\phi_{t}$-invariant, for any
curve $\gamma$ in the weak unstable foliation $\mathcal{W}^{+}$
we have\begin{equation}
\phi_{t}\circ\mathcal{H}_{x,y}^{\gamma}\circ\phi_{-t}=\mathcal{H}_{\phi_{t}x,\phi_{t}y}^{\phi_{t}\circ\gamma}.\label{eq:hol eq}\end{equation}

Next, we consider the differential of $\mathcal{H}_{x,y}^{\gamma}$,
known as the \emph{holonomy transport along $\gamma$}, \[
H_{x,y}^{\gamma}(p):=d_{p}\mathcal{H}_{x,y}^{\gamma}:T_{p}W^{u}(x)\rightarrow T_{\mathcal{H}_{x,y}^{\gamma}(p)}W^{u}(y).\]
 In particular we write \[
H_{x,y}^{\gamma}:E^{u}(x)\rightarrow E^{u}(y)\]
 for the map $H_{x,y}^{\gamma}(x):T_{x}W^{u}(x)\rightarrow T_{y}W^{u}(y)$.
Note that differentiating \eqref{eq:hol eq} gives

\begin{equation}
d_{y}\phi_{t}\circ H_{x,y}^{\gamma}\circ d_{\phi_{-t}x}\phi_{-t}=H_{\phi_{t}x,\phi_{t}y}^{\phi_{t}\circ\gamma}\ \ \ \mbox{as maps }E^{u}(\phi_{t}x)\rightarrow E^{u}(\phi_{t}y).\label{eq:diff hol eq}\end{equation}
 We say that a vector field $X\in\Gamma(E^{u})$ is \emph{invariant
under the holonomy transport along the leaves of} \textbf{$\mathcal{W}^{+}$}
if for any curve $\gamma$ from $x$ to $y$ contained in $W^{+}(x)$
it holds that \[
H_{x,y}^{\gamma}(p)(X(p))=X(\mathcal{H}_{x,y}^{\gamma}(p))\ \ \ \mbox{for all\ }p\in W^{u}(x).\]

\subsection*{The Kanai connection}

We now recall the definition and main features of the Kanai connection.

Let $I$ be the $(1,1)$-tensor on $N$ given by $I(v)=-v$ for $v\in E^{s}$,
$I(v)=v$ for $v\in E^{u}$ and $I(F)=0$. Consider the symmetric
non-degenerate bilinear form given by \[
h(X,Y):=\omega(X,IY)+\lambda\otimes\lambda(X,Y).\]
 The pseudo-Riemannian metric $h$ is of class $C^{1}$ and thus there
exists a unique $C^{0}$ affine connection $\nabla$ such that: 
\begin{enumerate}
\item $h$ is parallel with respect to $\nabla$; 
\item $\nabla$ has torsion $\omega\otimes F$. 
\end{enumerate}
\begin{thm} \label{thm:(the-Kanai-connection)} The Kanai connection
$\nabla$ has the following properties: 
\begin{enumerate}
\item $\nabla$ is $\phi_{t}$-invariant, $\nabla\omega=0$, $\nabla_{F}=L_{F}$
and $\nabla F=0$. 
\item The Anosov splitting is invariant under $\nabla$, that is, if $X_{s}\in\Gamma(E^{s}),X_{u}\in\Gamma(E^{u})$
and $Y$ is any vector field on $N$ then \[
\nabla_{Y}X_{s}\in\Gamma(E^{s}),\ \ \ \nabla_{Y}X_{u}\in\Gamma(E^{u}).\]

\item The restriction of $\nabla$ to each leaf of the foliations $\mathcal{W}^{s}$
and $\mathcal{W}^{u}$ of $N$ is flat (note that restriction of the
connection to the leaves of the stable and unstable foliations is
smooth so it makes sense to talk about its curvature). 
\item Parallel transport along curves on the weak stable and unstable manifolds
coincide with the holonomy transport determined by the stable and
unstable foliations. 
\end{enumerate}
\end{thm}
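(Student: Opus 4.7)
The plan is to exploit the fact that $\nabla$ is uniquely determined by its two defining properties, and that these properties are $\phi_t$-invariant. Uniqueness follows from the generalized Koszul identity for an affine connection compatible with a (pseudo-)Riemannian metric with prescribed torsion $T$:
\[
2h(\nabla_X Y, Z) = Xh(Y,Z) + Yh(X,Z) - Zh(X,Y) + h([X,Y],Z) - h([Y,Z],X) + h([Z,X],Y) + h(T(X,Y),Z) + h(T(Z,Y),X) + h(T(Z,X),Y),
\]
which determines $\nabla_X Y$ once $h$ and $T = \omega \otimes F$ are fixed; under our hypotheses the right-hand side is of class $C^0$. Since $\phi_t$ preserves $\omega$, $\lambda$, $F$ and the Anosov splitting (hence $I$), it preserves both $h$ and $T$, so $\phi_t^* \nabla$ satisfies the defining conditions and equals $\nabla$ by uniqueness. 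This yields the $\phi_t$-invariance in property (1) immediately.

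The deepest step is property (2), the $\nabla$-invariance of the Anosov splitting; I expect this to be the main obstacle. The approach is a rate comparison argument: for $X_s \in \Gamma(E^s)$ and a vector field $Y$ on $N$, decompose $\nabla_Y X_s = aF + Z_s + Z_u$ pointwise with $Z_s \in E^s$ and $Z_u \in E^u$. Applying $(\phi_{-t})_*$ and using $\phi_t$-invariance of $\nabla$ yields
\[
d\phi_{-t}(\nabla_Y X_s) = \nabla_{(\phi_{-t})_* Y}\bigl((\phi_{-t})_* X_s\bigr)
\]
at $\phi_{-t} x$. The $E^u$-component of the left-hand side grows like $e^{\mu t}$, whereas the right-hand side can be controlled by the $C^1$-norm of $(\phi_{-t})_* X_s$ on $E^s$ together with $\|(\phi_{-t})_* Y\|$; the Anosov bounds combined with Hirsch--Pugh--Shub estimates on first derivatives along leaves give a sub-exponential bound that forces $Z_u = 0$. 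A symmetric application of $\phi_t$ with $X_u \in \Gamma(E^u)$ handles the other strong bundle, and then $a = 0$ follows from $\nabla h = 0$ applied to the identity $h(X_s, F) = 0$.

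Once splitting invariance is in hand, the remaining parts of (1) become algebraic. The tensor $I$ is parallel because it acts as $\pm 1$ on the $\nabla$-invariant subbundles $E^s$ and $E^u$; from $\omega(X,Y) = -h(X,IY)$ on $E$, $i_F\omega = 0$, and $\nabla h = 0$ one deduces $\nabla\omega = 0$. From $h(F,F) = 1$ and $\nabla h = 0$ one obtains $\lambda(\nabla_Y F) = 0$, which together with $\nabla_Y F \in \mathbb{R}F$ (splitting invariance) gives $\nabla F = 0$. The torsion identity $T(F,X) = \nabla_F X - \nabla_X F - [F,X] = \omega(F,X)F = 0$ then collapses to $\nabla_F X = [F,X]$, i.e.\ $\nabla_F = L_F$.

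For (3), the curvature of the restriction of $\nabla$ to a leaf of $\mathcal{W}^s$ is $\phi_t$-invariant by (1) and lives over a leaf uniformly contracted by $\phi_t$; the induced exponential rescaling under $\phi_t$ is incompatible with $\phi_t$-invariance of a non-zero curvature tensor, forcing it to vanish, and the analogous argument with $\phi_{-t}$ handles $\mathcal{W}^u$. Finally (4) follows because holonomy transport $\mathcal{H}^\gamma_{x,y}$ along a leaf of $\mathcal{W}^+$ preserves $E^u$, is $\phi_t$-equivariant by \eqref{eq:diff hol eq}, and preserves the restrictions of $h$, $\omega$ and $\lambda$; its differential thus defines a leafwise flat linear transport on $E^u$ compatible with the defining data of the Kanai connection, and uniqueness identifies it with $\nabla$-parallel transport along curves in $W^+(x)$.
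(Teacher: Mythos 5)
The paper itself gives no proof of this theorem (it is quoted from Kanai and Feres--Katok), so the relevant comparison is with the standard arguments, which \emph{construct} the connection piecewise (namely $\nabla_{F}=L_{F}$, $\nabla_{X_{s}}Z_{u}:=p_{u}[X_{s},Z_{u}]$ and $\nabla_{Z_{u}}X_{s}:=p_{s}[Z_{u},X_{s}]$ for the mixed terms, the leafwise parts by $\omega$-duality), read off properties (2)--(4) from the construction, and only then invoke the uniqueness you state. Your opening step (Koszul identity with prescribed torsion, hence uniqueness and $\phi_{t}$-invariance) is fine, but the argument you give for (2) has a genuine gap, at two levels. First, the rate comparison does not work as described: for $t\geq0$ the map $d\phi_{-t}$ \emph{contracts} $E^{u}$, so the $E^{u}$-component of your left-hand side decays rather than grows; if you reverse time so that it grows, the right-hand side $\nabla_{(\phi_{t})_{*}Y}\bigl((\phi_{t})_{*}X_{s}\bigr)$ admits no sub-exponential bound, because $(\phi_{t})_{*}X_{s}$ has exponentially growing first derivatives in unstable directions; Hirsch--Pugh--Shub gives regularity of the invariant bundles under bunching, not estimates of this kind, and in the odd-dimensional case of the paper no bunching is assumed at all. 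Second, and more fundamentally, (2) cannot be derived from the defining data as literally stated: for \emph{any} connection preserving $E=E^{s}\oplus E^{u}$ one has $\lambda(T(X,Y))=-\lambda([X,Y])=d\lambda(X,Y)$ for $X,Y\in\Gamma(E)$, so torsion exactly $\omega\otimes F$ together with (2) would force $d\lambda=\omega$ on $E$, i.e.\ contact type --- which is contradicted by the paper's stable, non-contact complex-hyperbolic example to which the construction is meant to apply. The intended torsion is $d\lambda\otimes F$ (in Kanai's contact setting $\omega=d\lambda$, so the distinction is invisible there); with that reading, (2) follows from the Koszul identity you wrote by a purely algebraic computation using only $d\omega=0$, the $\omega$-isotropy of $E^{s}$ and $E^{u}$, and integrability of $E^{s},E^{u},E^{\pm}$ --- no dynamics --- and one finds in particular $\nabla_{X_{s}}Z_{u}=p_{u}[X_{s},Z_{u}]$.

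The arguments for (3) and (4) also do not close. The leafwise curvature is a $(1,3)$-tensor all of whose slots lie in $E^{s}$: under $d\phi_{t}$ the three inputs contract but so does the output, so invariance only yields $\|R\|\leq Ce^{(A-3a)t}\sup\|R\|$ when the contraction rates lie between $e^{-At}$ and $e^{-at}$, and vanishing requires the $3{:}1$ bunching $A<3a$; this is available under the even-case $1/2$-pinching but not in the odd case, and the bare principle that an invariant tensor over a contracted leaf must vanish is false (the identity endomorphism of $E^{s}$ is invariant). For (4), the invariances you invoke are vacuous, since $h$, $\omega$ and $\lambda$ all restrict to zero on $E^{u}$, and uniqueness of the global connection says nothing about a transport defined only along leaves. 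The efficient route is the identity $\nabla_{X_{s}}Z_{u}=p_{u}[X_{s},Z_{u}]$ together with $\nabla_{F}Z_{u}=[F,Z_{u}]$: it says that along weak stable leaves $\nabla$ acts on $E^{u}$ as the Bott (linear holonomy) connection, which is exactly (4); the Bott connection is flat along leaves by the Jacobi identity, and the leafwise connection on $E^{s}$ is its $\omega$-dual via $\nabla\omega=0$, so its curvature vanishes by nondegeneracy of the pairing, which gives (3) without any pinching hypothesis.
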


\begin{rem} \label{-indeo of gamma}Let us observe that since we
know that the restriction of $\nabla$ to each leaf of $\mathcal{W}^{s}$
and $\mathcal{W}^{u}$ is flat, it follows that $H_{x,y}^{\gamma}$
is independent of $\gamma$. Thus we will omit $\gamma$ from the
notation and simply write $H_{x,y}$.\end{rem}

\begin{lem}\label{The-holonomy-transport=00003D00003D00003Ddif}The
holonomy transport $H_{x,\phi_{t}x}$ is given by $d_{x}\phi_{t}|E^{u}(x)$,
that is,\[
H_{x,\phi_{t}x}=d_{x}\phi_{t}|E^{u}(x)\ \ \ \mbox{as maps }E^{u}(x)\rightarrow E^{u}(\phi_{t}x).\]
 \end{lem}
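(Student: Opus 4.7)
The plan is to exhibit an explicit curve in the weak unstable foliation and identify the underlying holonomy diffeomorphism geometrically. First I would choose $\gamma(s) := \phi_s(x)$ for $s \in [0,t]$ as the curve from $x$ to $\phi_t x$; since the flow orbit through any point lies in both its weak stable and weak unstable manifolds, $\gamma$ is entirely contained in $W^{+}(x)$. By Remark~\ref{-indeo of gamma} the holonomy transport does not depend on the curve chosen, so this is a legitimate choice.

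The core step is to show that the holonomy diffeomorphism $\mathcal{H}^\gamma_{x,\phi_t x}$ between strong unstable leaves coincides locally with the time-$t$ map of the flow. Fix a foliation chart $U$ meeting a short subarc of $\gamma$, and let $p \in W^u_U(x)$ be close to $x$. On one hand, because $\phi_t$ preserves the strong unstable foliation, $\phi_t(p) \in W^u(\phi_t x)$; for $p$ sufficiently near $x$ and $t$ sufficiently small, this representative lies in the local leaf $W^u_U(\phi_t x)$. On the other hand, the entire orbit of $p$ is contained in $W^{+}(p)$, so $\phi_t(p) \in W^{+}(p)$ automatically. Transversality of $\mathcal{W}^u$ and $\mathcal{W}^{+}$ inside a foliation chart forces $W^{+}(p) \cap W^u_U(\phi_t x)$ to consist of a single point, which must therefore be $\phi_t(p)$. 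This identifies $\mathcal{H}^\gamma_{x,\phi_t x}$ with $\phi_t$ as germs of diffeomorphisms $W^u(x) \to W^u(\phi_t x)$ at $x$.

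For general $t$ I would reduce to this local picture either by covering $\gamma([0,t])$ with finitely many foliated charts and using the semigroup property $\phi_{s+t} = \phi_s \circ \phi_t$, or more cleanly by invoking Remark~\ref{-indeo of gamma} once the identity is known for short times. Differentiating the equality $\mathcal{H}^\gamma_{x,\phi_t x} = \phi_t$ at $p = x$ and restricting to $E^u(x) = T_x W^u(x)$ then yields
\[
H_{x,\phi_t x} = d_x \phi_t|_{E^u(x)},
\]
as required. I do not anticipate a substantial obstacle; the only delicate verification is that $\phi_t(p)$ falls inside the \emph{local} piece $W^u_U(\phi_t x)$ of the chart, which follows by continuity of the flow and of the foliations $\mathcal{W}^u,\mathcal{W}^{+}$.
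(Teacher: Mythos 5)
Your argument is correct, but it is not the route the paper takes. You identify the holonomy \emph{diffeomorphism} itself: along the orbit segment $\gamma(s)=\phi_{s}x$ (a legitimate curve in $W^{+}(x)$, and a legitimate choice by Remark \ref{-indeo of gamma}), the point $\phi_{t}(p)$ lies both in $W^{+}(p)$ and in the strong unstable leaf of $\phi_{t}x$, so by the uniqueness of the intersection point in a foliation chart it must be the holonomy image of $p$; hence $\mathcal{H}_{x,\phi_{t}x}^{\gamma}=\phi_{t}$ as germs along $W^{u}(x)$, and differentiating at $x$ gives the claim. This is purely foliation-theoretic: it uses only the flow-invariance of $\mathcal{W}^{u}$ and $\mathcal{W}^{+}$ and the local product structure, and it proves the stronger statement that the holonomy map (not just its derivative) is the time-$t$ map. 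The paper instead argues infinitesimally through the Kanai connection: it shows that $V(t)=d_{x}\phi_{t}(\xi)$ is $\nabla$-parallel along the orbit using $\nabla_{F}=L_{F}$, and then invokes Theorem \ref{thm:(the-Kanai-connection)}(4), which identifies parallel transport on (un)stable leaves with holonomy transport. Given that the Kanai machinery is already set up, the paper's proof is a two-line computation and stays within the connection-theoretic framework used in the rest of Section \ref{sec:Constructing-the-Invariant}; your proof is more elementary and self-contained, at the cost of the chart-by-chart bookkeeping for large $t$ (your covering argument with the group property of $\phi_{t}$ handles this; note that curve-independence alone does not do the gluing---it is the composition property $\mathcal{H}_{y,z}^{\gamma}\circ\mathcal{H}_{x,y}^{\gamma}=\mathcal{H}_{x,z}^{\gamma}$ that you actually need there). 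Both proofs rely on Remark \ref{-indeo of gamma} to make the unadorned notation $H_{x,\phi_{t}x}$ meaningful.
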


\begin{proof}Fix $x\in N$, and let $\Gamma(t):=\phi_{t}x$. Fix
$\xi\in E^{u}(x)$, and let $ $$V(t)$ denote the vector field along
$\Gamma(t)$ defined by $V(t)=d_{x}\phi_{t}(\xi)$. It suffices to
show that $V$ is parallel: $\nabla_{\dot{\Gamma}}V\equiv0$.

Note that $\dot{\Gamma}(t)=F(\phi_{t}x)$. Since $\nabla_{F}=L_{F}$
we have \[
\nabla_{F(\phi_{t}x)}V(t)=\frac{d}{dt}\Bigl|_{t=0}d_{\phi_{t}x}\phi_{-t}(V(\phi_{t}x))=\frac{d}{dt}\Bigl|_{t=0}\xi=0.\]
 and the lemma follows.\end{proof}

\section{\label{sec:Constructing-the-Invariant}Constructing the Invariant
Subbundles}

Throughout this section let $\left(\Sigma,\omega\right)$ denote a
stable Anosov Hamiltonian structure of dimension $2n-1$ where $n\geq2$,
$\lambda$ a stabilizing $1$-form for $(\Sigma,\omega)$, $F$ the
Reeb vector field of $\lambda$, and $\phi_{t}$ the flow of $F$.
If $n$ is odd, suppose that the weak (un)stable bundles are of class
$C^{1}$. If $n$ is even, assume that the Anosov Hamiltonian structure
is $1/2$-pinched. In either case, the Kanai connection $\nabla$
is defined. The goal in this section is to construct a subbundle of
$E^{u}$ that is invariant under both $\phi_{t}$ and the holonomy
transport along the leaves of the weak stable foliation $\mathcal{W}^{+}$.
It is the existence of this subbundle that we will then exploit in
Section \ref{sec:Proof-of-Theorem A} in order to prove Theorem A
from the introduction. The main ideas for these constructions come
from \cite{Hamenstaedt1995,Feres1991,FeresKatok1990,Kanai1993}.\\

In the even dimensional case we will need to know that $d\lambda$
is parallel with respect to the Kanai connection. This is the only
place in the paper where we will actually use the $1/2$-pinching
condition (as opposed to just $C^{1}$ weak (un)stable bundles). The
following lemma is due to Kanai (\cite[Lemma 3.2]{Kanai1993}). \begin{lem}
\label{lem:.dlambda is parallel}Suppose $\phi_{t}$ is a time change
of a $1/2$-pinched Anosov flow. Then $\nabla(d\lambda)=0$.\end{lem}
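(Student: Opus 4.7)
Writing $\tau := \nabla(d\lambda)$, the plan is to show pointwise that the $(0,3)$-tensor $\tau$ vanishes on every combination of arguments drawn from the invariant splitting $T\Sigma = \mathbb{R}F \oplus E^s \oplus E^u$. Three ingredients from the paper will be used throughout: (i) $d\lambda$ is $\phi_t^*$-invariant, because the Reeb conditions $\lambda(F)=1$ and $i_F d\lambda = 0$ combine via Cartan's formula to give $L_F\lambda = 0$; (ii) the Kanai connection is $\phi_t$-invariant, satisfies $\nabla F = 0$, preserves the Anosov splitting, and $\nabla_F Y = [F,Y]$; and (iii) combining (i) and (ii), $\tau$ itself is $\phi_t^*$-invariant.

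First I would dispose of every triple in which one argument is $F$. If $Y=F$ or $Z=F$, the Leibniz expansion
\[
(\nabla_X d\lambda)(Y,Z) = X\!\left(d\lambda(Y,Z)\right) - d\lambda(\nabla_X Y, Z) - d\lambda(Y, \nabla_X Z)
\]
collapses to $0$ thanks to $i_F d\lambda = 0$ and $\nabla F = 0$. If $X = F$, the same expansion together with $\nabla_F Y = [F,Y]$ rewrites $(\nabla_F d\lambda)(Y,Z)$ as $(L_F d\lambda)(Y,Z) = d(L_F\lambda)(Y,Z) = 0$. Next I would observe that $d\lambda|_{E^s\times E^s} = d\lambda|_{E^u\times E^u} = 0$: for $Y,Z \in E^s(x)$, $\phi_t^*$-invariance gives $d\lambda_x(Y,Z) = d\lambda_{\phi_t x}(d\phi_t Y, d\phi_t Z)$, and Anosov contraction forces the right side to zero as $t \to \infty$. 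Since $\nabla$ preserves the splitting, this forces $\tau(X,Y,Z) = 0$ whenever $Y,Z$ lie in the same strong bundle; notice that up to this point no pinching has been used.

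The remaining case is $Y \in E^s$ and $Z \in E^u$ (with $X \in E^s \cup E^u$), and this is exactly where $1/2$-pinching enters. From the $\phi_t^*$-invariance of $\tau$ and tensoriality,
\[
|\tau_x(X,Y,Z)| = |\tau_{\phi_t x}(d\phi_t X, d\phi_t Y, d\phi_t Z)| \le \|\tau\|_\infty\,|d\phi_t X|\,|d\phi_t Y|\,|d\phi_t Z|.
\]
When $X,Y \in E^s$ and $Z \in E^u$, flowing forward the pinching bounds give a product bounded by $C^3 e^{(A-2a)t}$, which tends to zero precisely because $A < 2a$; when $X \in E^u$, $Y \in E^s$, $Z \in E^u$, the mirror computation under $\phi_{-t}$ yields the same rate $e^{(A-2a)t}$. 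Hence $\tau_x(X,Y,Z) = 0$ in both subcases and the proof is complete. The main thing to watch is that $\nabla d\lambda$ is a bona fide $C^0$ tensor on the compact manifold $\Sigma$ (using the $C^1$ regularity of the weak bundles, which is ensured by $1/2$-pinching), so that $\|\tau\|_\infty$ is finite; beyond this the argument is just a careful count of which combination of stable/unstable exponents appears, the inequality $A < 2a$ being used in the minimal way needed to beat one ``wrong-way'' exponent against two ``right-way'' ones.
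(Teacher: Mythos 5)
Your strategy coincides with the paper's: show that $\tau:=\nabla(d\lambda)$ is a flow-invariant $(0,3)$-tensor annihilated by $F$ in every slot, then kill the remaining components by pushing with the flow and using $A<2a$; your extra observation that the triples with both of the last two arguments in the same strong bundle need no pinching (only Anosov contraction plus the fact that $\nabla$ preserves the splitting) is correct but inessential, since the paper's single estimate already handles those cases with room to spare. However, there is one genuine gap. The hypothesis of the lemma is that the Reeb flow $\phi_t$ of $\lambda$ is only a \emph{time change} of a $1/2$-pinched Anosov flow: the pinching inequalities are assumed for some other parametrization $\psi_t$ of the characteristic foliation, not for $\phi_t$ itself, and they are not known to survive reparametrization. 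In your final step you nevertheless apply the bounds $|d_x\phi_t(\xi)|\le C|\xi|e^{-at}$ (on $E^s$) and $|d_x\phi_t(\xi)|\le C|\xi|e^{At}$ (on $E^u$) to the Reeb flow, so the inequality $|\tau_x(X,Y,Z)|\le \|\tau\|_\infty C^3 e^{(A-2a)t}$ is, as written, unjustified.

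The repair is short and uses exactly what you have already proved. Since $i_F\tau=0$ in every slot, $\tau$ is invariant not only under $\phi_t$ but under every time change $\psi_t$ of it: the differential $d\psi_t$ differs from the appropriate $d\phi_s$ only by components along $F$, which $\tau$ does not see. Moreover the weak bundles are time-change invariant, so the strong bundles of $\psi_t$ differ from those of $\phi_t$ only by components in $\mathbb{R}F$; hence vanishing of $\tau$ on the $\psi_t$-splitting implies vanishing on the $\phi_t$-splitting and therefore everywhere. Consequently one may assume without loss of generality that the flow used in the estimate is itself $1/2$-pinched --- this is precisely the reduction made in the first lines of the paper's proof --- and once that sentence is added your argument closes.
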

\begin{proof} Suppose $\tau$ is any invariant $(0,3)$-tensor annihilated
by $F$, i.e. $i_{F}\tau=0$. We claim that $\tau$ must vanish. Note
that if $\psi_{t}$ is any time change of $\phi_{t}$, then $\psi_{t}$
also leaves $\tau$ invariant since $F$ annihilates $\tau$, so in
the proof below without loss of generality, we may assume that $\phi_{t}$
itself is $1/2$-pinched.

To see that $\tau$ vanishes consider for example a triple of vectors
$(\xi_{1},\xi_{2},\xi_{3})$ where $\xi_{1},\xi_{2}\in E^{s}(x)$
but $\xi_{3}\in E^{u}(x)$. Then there is a constant $C>0$ such that
\begin{align*}
|\tau_{x}(\xi_{1},\xi_{2},\xi_{3})| & =|\tau_{\phi_{t}x}(d_{x}\phi_{t}(\xi_{1}),d_{x}\phi_{t}(\xi_{2}),d_{x}\phi_{t}(\xi_{3}))|\\
 & \leq Ce^{(A-2a)t}|\xi_{1}||\xi_{2}||\xi_{3}|,\end{align*}
 By the $1/2$-pinching condition the last expression tends to zero
as $t\to\infty$ and therefore $\tau_{x}(\xi_{1},\xi_{2},\xi_{3})=0$.
The same will happen for other possible triples $(\xi_{1},\xi_{2},\xi_{3})$
when we let $t\to\pm\infty$.

Since $d\lambda$ and $\nabla$ are $\phi_{t}$-invariant, so is $\nabla(d\lambda)$.
Since $i_{F}d\lambda=0$, $\nabla(d\lambda)$ is also annihilated
by $F$ (to see that $\nabla_{F}(d\lambda)=0$ use that $d\lambda$
is $\phi_{t}$-invariant and that $\nabla_{F}=L_{F}$). Hence by the
previous argument applied to $\tau=\nabla(d\lambda)$ we conclude
that $\nabla(d\lambda)=0$ as desired.

\end{proof} \begin{lem} There exists a smooth $\phi_{t}$-invariant
bundle map $L:E\rightarrow E$ such that for $X,Y\in\Gamma(E)$, \[
d\lambda(X,Y)=\omega(LX,Y)=\omega(X,LY).\]
 Moreover $L$ preserves the decomposition of $E=E^{s}\oplus E^{u}$,
that is, $L=L^{s}+L^{u}$ where $L^{s}:E^{s}\rightarrow E^{s}$ and
$L^{u}:E^{u}\rightarrow E^{u}$.\end{lem}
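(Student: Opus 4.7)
The strategy is to define $L$ pointwise using non-degeneracy of $\omega$ on $E$, then to exploit $\phi_t$-invariance together with the Anosov contraction/expansion estimates to show $L$ respects the splitting. No appeal to the Kanai connection (or to Lemma \ref{lem:.dlambda is parallel}) should be needed for this lemma itself.

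First I would observe that $\omega|_E$ is fiberwise symplectic: since $i_F\omega = 0$ and $\omega^{n-1}\ne 0$, its restriction to the smooth rank-$(2n-2)$ bundle $E = \ker\lambda$ is a non-degenerate antisymmetric form. Hence for each $X \in E$ there is a unique $L(X) \in E$ determined by
\[
\omega(L(X), Y) = d\lambda(X, Y) \quad \text{for all } Y \in E.
\]
Because $\omega$, $d\lambda$, and $E$ are smooth, the bundle map $L : E \to E$ is smooth. The identity $\omega(LX, Y) = \omega(X, LY)$ then follows at once: both sides equal $d\lambda(X, Y)$ by antisymmetry of $\omega$ and $d\lambda$.

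Next, for $\phi_t$-invariance of $L$, I would note that $\omega$ is $\phi_t$-invariant by hypothesis, while $d\lambda$ is $\phi_t$-invariant because $i_F d\lambda = 0$ (stability) and hence $L_F d\lambda = 0$ by Cartan's formula. Pulling back the defining identity for $L$ under $\phi_t$ yields $\omega(d\phi_{-t}\,L(d\phi_t X), Y) = \omega(L(X), Y)$ for all $Y \in E$, and non-degeneracy of $\omega|_E$ forces $L \circ d\phi_t = d\phi_t \circ L$.

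The main step is showing $L$ preserves the Anosov splitting $E = E^s \oplus E^u$. Given $X \in E^s(x)$, decompose $L(X) = Y_s + Y_u$ with $Y_s \in E^s(x)$ and $Y_u \in E^u(x)$. Applying $\phi_t$-invariance and composing with the projection $\pi_u$ onto $E^u$ along $E^s$ (which is uniformly bounded by compactness of $\Sigma$), one obtains
\[
d_x\phi_t\,Y_u = \pi_u L(d_x\phi_t X).
\]
The right-hand side satisfies $|\pi_u L(d_x\phi_t X)| \le C'\|L\|\,|d_x\phi_t X| \le C''\,e^{-\mu t}\,|X|$ by Anosov contraction on $E^s$, while the left-hand side is bounded below by $C^{-1} e^{\mu t}\,|Y_u|$ by Anosov expansion on $E^u$. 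Sending $t \to \infty$ forces $Y_u = 0$; the inclusion $L(E^u) \subset E^u$ is obtained symmetrically by sending $t \to -\infty$. Setting $L^s := L|_{E^s}$ and $L^u := L|_{E^u}$ then gives the desired splitting $L = L^s + L^u$. I expect this final step to be the main (though modest) obstacle, the only subtlety being the uniformity of the projection constants, which follows from compactness.
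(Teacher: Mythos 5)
Your proposal is correct, but the decisive step --- showing that $L$ preserves the splitting --- is handled by a genuinely different argument from the paper's. The paper argues algebraically: for $X_{s},Y_{s}\in\Gamma(E^{s})$ it computes $d\lambda(X_{s},Y_{s})=X_{s}\lambda(Y_{s})-Y_{s}\lambda(X_{s})-\lambda([X_{s},Y_{s}])=0$, using integrability of $E^{s}$ and $\ker\,\lambda=E$, so that $\omega(LX_{s},Y_{s})=0$ for all $Y_{s}\in\Gamma(E^{s})$; since $E^{s}$ is a Lagrangian subbundle of $(E,\omega|_{E})$ (isotropic by the usual invariance-plus-contraction argument, and of half the dimension), this forces $LX_{s}\in\Gamma(E^{s})$, and symmetrically for $E^{u}$. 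You instead run a purely dynamical argument: commutation of $L$ with $d\phi_{t}$ together with the Anosov estimates and uniform bounds on $\left\Vert L\right\Vert $ and on the projections kills the off-diagonal blocks of $L$ as $t\rightarrow\pm\infty$. Both are valid. The paper's route is shorter once one grants the (standard, but unstated there) fact that the $\omega$-annihilator of $E^{s}$ inside $E$ is $E^{s}$ itself; your route avoids both the integrability of the strong foliations and the Lagrangian property, at the cost of an estimate, and is in that sense more self-contained --- it is essentially the same contraction argument that proves isotropy of $E^{s}$ and $E^{u}$ in the first place. The first half of your write-up (pointwise definition of $L$ via non-degeneracy of $\omega|_{E}$, the symmetry identity, and $\phi_{t}$-invariance from $L_{F}\omega=L_{F}d\lambda=0$) coincides with what the paper simply asserts; the only point worth making explicit there is that $d_{x}\phi_{t}$ maps $E(x)$ onto $E(\phi_{t}x)$ (because $\lambda$, and hence $\ker\,\lambda$, is $\phi_{t}$-invariant), which you use tacitly when pulling back the defining identity and invoking non-degeneracy.
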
 \begin{proof} Since $d\lambda$
is $\phi_{t}$-invariant and annihilated by $F$, there exists a $\phi_{t}$-invariant
smooth section $L$ of $E^{*}\otimes E$ such that the stated equation
holds. It remains to check that $L$ preserves the decomposition,
that is, $L$ commutes with $I$. But this is clear: if $X_{s},Y_{s}\in\Gamma(E^{s})$
then \[
d\lambda(X_{s},Y_{s})=X_{s}\lambda(Y_{s})-Y_{s}\lambda(X_{s})-\lambda\bigl(\left[X_{s},Y_{s}\right]\bigr);\]
 using integrability of $E^{s}$ and the fact that $\ker\,\lambda=E$,
we see that \[
0=d\lambda(X_{s},Y_{s})=\omega(LX_{s},Y_{s}),\]
 and hence $LX_{s}\in\Gamma(E^{s})$. The same argument applies with
sections of $E^{u}$. \end{proof} The construction of the invariant
subbundle will depend on the parity of $n$. We will begin with the
easier case, when $n$ is even.

\subsection*{The case of $n$ even}

Since $\dim\,\Sigma=2n-1$ and $n$ is even, we have $\dim\, E^{u}=n-1$
an odd number. Thus for any $x\in\Sigma$, the map $L_{x}^{u}:E^{u}(x)\rightarrow E^{u}(x)$
admits a real eigenvalue $\rho_{x}$. In fact, we have the following
result. \begin{lem} \label{lem:eigenvalue lemma}There exists $\rho\in\mathbb{R}$
such that $\rho_{x}=\rho$ for all $x\in\Sigma$.\end{lem}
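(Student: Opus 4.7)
The plan is to show that the characteristic polynomial $p_x(\mu) := \det(\mu I - L^u_x)$ of $L^u_x$ is independent of $x \in \Sigma$; since its degree $n - 1$ is odd (as $n$ is even), $p_x$ then admits a real root which we take to be $\rho$.

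First I would observe that $L$ is $\phi_t$-invariant and respects the splitting $E = E^s \oplus E^u$, so
\[ L^u_{\phi_t x} = d_x\phi_t \circ L^u_x \circ (d_x \phi_t)^{-1} \qquad \text{as maps } E^u(\phi_t x) \to E^u(\phi_t x). \]
Writing $p_x(\mu) = \mu^{n-1} + \sum_{j=0}^{n-2} c_j(x)\, \mu^j$, each coefficient $c_j \colon \Sigma \to \mathbb{R}$ is then a smooth function which is constant along orbits of $\phi_t$.

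Next I would promote ``constant along orbits'' to ``constant on $\Sigma$'' by ergodicity. The stability assumption makes $\Omega := \lambda \wedge \omega^{n-1}$ a smooth volume form on $\Sigma$, and the identities $\lambda(F) = 1$, $i_F d\lambda = 0$, $L_F \omega = 0$ give $L_F \Omega = 0$ via Cartan's formula. Hence $\phi_t$ is a smooth volume-preserving Anosov flow, so by the classical theorem of Anosov it is ergodic. Each smooth $\phi_t$-invariant function $c_j$ is therefore constant, so $p_x$ is independent of $x$, and $\rho$ can be taken as any real root of the resulting polynomial.

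The main obstacle is justifying the appeal to ergodicity in this abstract setting, where $\phi_t$ need not be a geodesic flow; however, $C^1$ weak bundles (which hold here by the $1/2$-pinching hypothesis via Hirsch--Pugh--Shub) suffice to run Hopf's absolute-continuity argument. An alternative route that stays entirely within the paper's framework is to derive $\nabla L = 0$ from $\nabla \omega = 0$ and $\nabla d\lambda = 0$, combined with the defining relation $d\lambda(X,Y) = \omega(LX,Y)$ and the non-degeneracy of $\omega|_E$; this forces $L^u$ to commute with holonomy along leaves of $\mathcal{W}^u$ (via the identification of parallel transport with holonomy on leaves), which together with $\phi_t$-invariance and transitivity again yields that $p_x$ is constant in $x$.
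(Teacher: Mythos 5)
Your proof is correct, and its backbone is the same as the paper's: the coefficients of the characteristic polynomial of $L^{u}_{x}$ are continuous, $\phi_{t}$-invariant functions on $\Sigma$, hence constant, and since $\deg p_{x}=n-1$ is odd (as $n$ is even) a common real eigenvalue exists. The only real difference is how you pass from ``invariant and continuous'' to ``constant''. The paper uses topological transitivity: since $\phi_{t}$ preserves a probability measure (exactly the volume $\lambda\wedge\omega^{n-1}$ you construct), the non-wandering set is all of $\Sigma$, and an Anosov flow with $\Omega(\phi)=\Sigma$ is transitive, so a continuous invariant function is constant along a dense orbit and hence everywhere. You instead invoke ergodicity of the smooth volume-preserving Anosov flow (Anosov/Hopf), which gives constancy almost everywhere and then everywhere by continuity and full support of the volume. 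Both are valid; your route is heavier, since it needs absolute continuity of the foliations (here harmless, because the flow is $C^{\infty}$ and, in any case, the weak bundles are $C^{1}$ by Hirsch--Pugh--Shub), whereas the paper's transitivity argument is purely topological and needs no regularity of the splitting beyond continuity. One small point of hygiene: $L^{u}$ is only as regular as the splitting (so $C^{1}$ at best, not smooth, even though $L$ itself is a smooth section of $E^{*}\otimes E$), but continuity of the coefficients is all either argument uses. Your alternative route via $\nabla L=0$ and holonomy invariance is also sound, but it front-loads material the paper only develops in the subsequent proposition and is not needed for this lemma.
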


\begin{rem}\label{standard anosov rem}In the proof of the lemma
we will make use of the fact that $\phi_{t}$ is a \emph{transitive}
flow. To see this, we first note that since $\phi_{t}$ preserves
a probability measure, the \emph{non-wandering set }$\Omega(\phi)$
of $\phi_{t}$ is necessarily equal to all of $\Sigma$ (see for instance
\cite{Anosov1967} or \cite[Chapter 18]{KatokHasselblatt1995}). It
is then a standard result that an Anosov flow whose non-wandering
set is the whole space is transitive (see for instance \cite[p.576]{KatokHasselblatt1995}).
We also remark that since $\phi_{t}$ is Anosov the set of periodic
points is dense in the non-wandering set $\Omega(\phi)$, and hence
dense in $\Sigma$; we will use this observation in the next subsection.\end{rem}

\begin{proof} For $k\geq0$, let $a_{k}(x)$ denote the coefficient
of $t^{k}$ in the characteristic polynomial $p_{x}(t)$ of $L_{x}^{u}$.
Then $a_{k}:\Sigma\rightarrow\mathbb{R}$ is continuous and $\phi_{t}$-invariant.
Since $\phi_{t}$ is transitive, $a_{k}$ is constant. Thus the characteristic
polynomial $p_{x}(t)$ of $L_{x}^{u}$ is independent of $x$, and
so each $L_{x}^{u}$ admits the same eigenvalues. \end{proof} Let
therefore $\rho_{0}\in\mathbb{R}$ be a common eigenvalue of the maps
$\left\{ L_{x}^{u}\right\} $. Let \begin{equation}
P_{\rho_{0}}(x):=\left\{ \xi\in E^{u}(x)\,:\, L_{x}^{u}\xi=\rho_{0}\xi\right\} \ne\{0\}.\label{eq:P ro 0}\end{equation}

\begin{prop} The map $x\mapsto P_{\rho_{0}}(x)$ defines a $C^{1}$
subbundle of $E^{u}$. Moreover for $x\in\Sigma$, the restriction
of $P_{\rho_{0}}$ to $W^{u}(x)$ is integrable. \end{prop}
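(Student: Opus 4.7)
The plan is threefold: show $x\mapsto P_{\rho_{0}}(x)$ has constant dimension so that it forms a continuous subbundle, upgrade this to $C^{1}$ regularity, and then verify integrability of $P_{\rho_{0}}|_{W^{u}(x)}$ via Frobenius. The linchpin is the assertion that $L$ is parallel with respect to the Kanai connection, i.e.\ $\nabla L=0$. This follows from $\nabla\omega=0$ (Theorem \ref{thm:(the-Kanai-connection)}) and $\nabla(d\lambda)=0$ (Lemma \ref{lem:.dlambda is parallel}, which is the single place the $1/2$-pinching hypothesis enters in the even-dimensional case) by differentiating the defining identity $d\lambda(X,Y)=\omega(LX,Y)$ and invoking non-degeneracy of $\omega$ on $E$. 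Since $\nabla$ preserves the Anosov splitting (Theorem \ref{thm:(the-Kanai-connection)}(2)), $L^{u}=L|_{E^{u}}$ is likewise $\nabla$-parallel.

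For the constant-dimension step, fix $x\in\Sigma$ and let $y$ lie in the same leaf of $\mathcal{W}^{u}$ (or $\mathcal{W}^{s}$) as $x$. Flatness of $\nabla$ along the leaves (Theorem \ref{thm:(the-Kanai-connection)}(3)) makes parallel transport in the leaf well-defined, and since $L$ is parallel this transport conjugates $L^{u}_{x}$ to $L^{u}_{y}$. Along a flow orbit, $d\phi_{t}$ conjugates $L^{u}_{x}$ to $L^{u}_{\phi_{t}x}$ by $\phi_{t}$-invariance of $L$. The local product structure of an Anosov flow means nearby points can be reached from one another by concatenations of such curves, so the full Jordan type of $L^{u}_{x}$ is locally constant in $x$; by connectedness of $\Sigma$ it is globally constant, and in particular $\dim P_{\rho_{0}}(x)$ is independent of $x$. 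Since $L^{u}$ is a $C^{1}$ bundle map and $L^{u}-\rho_{0}I$ now has constant rank, $P_{\rho_{0}}=\ker(L^{u}-\rho_{0}I)$ is a $C^{1}$ subbundle by the constant-rank theorem.

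For integrability along $W^{u}(x)$, I apply Frobenius. Take $X,Y\in\Gamma(P_{\rho_{0}})$ tangent to $W^{u}(x)$. Because $\nabla L=0$ and $\nabla$ preserves $E^{u}$, one checks $L^{u}(\nabla_{X}Y)=\rho_{0}\nabla_{X}Y$, so $\nabla_{X}Y,\,\nabla_{Y}X\in\Gamma(P_{\rho_{0}})$. The torsion $T(X,Y)=\omega(X,Y)F$ of the Kanai connection vanishes because $E^{u}$ is Lagrangian for the symplectic pairing $\omega|_{E}$: vectors in $E^{u}$ contract under $d\phi_{-t}$ as $t\to+\infty$, so $\phi_{t}$-invariance of $\omega$ forces $\omega|_{E^{u}\times E^{u}}\equiv 0$, and the dimension count $\dim E^{u}=n-1=\frac{1}{2}\dim E$ upgrades isotropic to Lagrangian. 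Hence $[X,Y]=\nabla_{X}Y-\nabla_{Y}X-T(X,Y)\in\Gamma(P_{\rho_{0}})$, and Frobenius delivers the foliation.

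The main obstacle is the constant-dimension step. Constancy of the characteristic polynomial (already extracted from transitivity in Lemma \ref{lem:eigenvalue lemma}) does not by itself rule out Jordan-type jumps between distinct $\phi_{t}$-orbits, and I do not see how to dispose of this purely from semicontinuity plus $\phi_{t}$-invariance of the various rank functions. The fix is to feed in all of the structure of the Kanai connection at once: $\nabla L=0$ gives two additional transverse directions of conjugacy, and the local product structure of the Anosov flow ties arbitrary pairs of points together through them.
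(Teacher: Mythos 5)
Your proposal is correct and follows essentially the paper's own argument: you obtain $\nabla L=0$ by differentiating $d\lambda(\cdot,\cdot)=\omega(L\cdot,\cdot)$ against $\nabla(d\lambda)=0$ and $\nabla\omega=0$, deduce that parallel transport conjugates $L^{u}_{x}$ to $L^{u}_{y}$ so that $\dim P_{\rho_{0}}$ is constant and $P_{\rho_{0}}=\ker(L^{u}-\rho_{0}\mathrm{Id})$ is a $C^{1}$ subbundle, and get integrability from the vanishing of the torsion $\omega\otimes F$ on the isotropic bundle $E^{u}$. The only divergence is that your constant-dimension step is more laborious than needed: since the Kanai connection preserves the Anosov splitting, parallel transport along an arbitrary curve joining any two points of the connected manifold $\Sigma$ already maps $E^{u}(x)$ to $E^{u}(y)$ and conjugates $L^{u}_{x}$ to $L^{u}_{y}$, so the detour through leafwise transport, flow conjugacy and the local product structure (and the appeal to leafwise flatness, which only governs path-independence, not existence, of transport) can be dropped --- this direct transport argument is what the paper uses.
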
 \begin{proof}
Since $d\lambda$ is $\nabla$-parallel by Lemma \ref{lem:.dlambda is parallel},
$P_{\rho_{0}}(x)$ is invariant under the parallel transport of $\nabla$.
More precisely, given a curve $\gamma$ from $x$ to $y$, if $\mathcal{P}_{x,y}^{\gamma}:T_{x}\Sigma\rightarrow T_{y}\Sigma$
denotes the parallel transport along $\gamma$ with respect to $\nabla$,
then \[
\mathcal{P}_{x,y}^{\gamma}P_{\rho_{0}}(x)\subseteq P_{\rho_{0}}(y),\]
 since for $\xi\in E^{u}(x)$, if $L_{x}^{u}\xi=\rho_{0}\xi$ then
\[
L_{y}^{u}\mathcal{P}_{x,y}^{\gamma}(\xi)=\rho_{0}\mathcal{P}_{x,y}^{u}(\xi).\]

Indeed, we have for any vector fields $X,Y,Z$ that:\begin{eqnarray*}
0 & = & \bigl(\nabla_{X}d\lambda\bigr)\left(Y,Z\right)\\
 & = & \nabla_{X}(d\lambda(Y,Z))-d\lambda\bigl(\nabla_{X}Y,Z\bigr)-d\lambda\bigl(Y,\nabla_{X}Z\bigr)\\
 & = & \nabla_{X}\bigl(\omega(LY,Z)\bigr)-\omega\bigl(L\bigl(\nabla_{X}Y\bigr),Z\bigr)-\omega\bigl(LY,\nabla_{X}Z\bigr)\\
 & = & \nabla_{X}\bigl(\omega\left(LY,Z\right)\bigr)-\omega\bigl(\nabla_{X}(LY),Z\bigr)-\omega\bigl(LY,\nabla_{X}Z\bigr)+\omega\bigl(\bigl(\nabla_{X}L\bigr)Y,Z\bigr)\\
 & = & \nabla_{X}\omega\bigl(LY,Z\bigr)+\omega\bigl(\bigl(\nabla_{X}L\bigr)Y,Z\bigr)\\
 & = & \omega\bigl(\bigl(\nabla_{X}L\bigr)Y,Z\bigr),\end{eqnarray*}
where the last equality used the fact that $\nabla\omega=0$. Thus
$L$ is parallel, and hence $L^{u}$ and $L^{s}$ are also parallel.
Thus: \begin{eqnarray*}
L_{y}^{u}\mathcal{P}_{x,y}^{\gamma}(\xi) & = & \mathcal{P}_{x,y}^{\gamma}\bigl(L_{x}^{u}\xi\bigr)\\
 & = & \mathcal{P}_{x,y}^{\gamma}(\rho_{0}\xi)\\
 & = & \rho_{0}\mathcal{P}_{x,y}^{\gamma}(\xi).\end{eqnarray*}
 It remains to check that the restriction of $P_{\rho_{0}}$ to $W^{u}(x)$
is integrable. Note that if $X$ and $Y$ are parallel sections of
$E^{u}$ over $W^{u}(x)$ then since $\nabla$ has no torsion over
$E^{u}$, we have \[
0=\nabla_{X}Y-\nabla_{Y}X=\left[X,Y\right].\]

\end{proof}

\subsection*{The case of $n$ odd}

We now want to construct a $\phi_{t}$-invariant subbundle $P_{\rho_{0}}\subseteq E^{u}$
that is invariant under holonomy transport along the leaves of $\mathcal{W}^{+}$
for the case when $n$ is an odd integer. As before we construct the
maps $L_{x}^{u}:E^{u}(x)\rightarrow E^{u}(x)$; however in this case
since $\dim\, E^{u}(x)=n-1$ is even, it is no longer necessarily
the case that $L_{x}^{u}$ admits a real eigenvalue, and so our previous
construction will not work.

\begin{rem}\label{Referee remark}Recall we are only assuming that
$\phi_{t}$ is a time change of a $1/2$-pinched Anosov flow in the
even dimensional case. Thus in the odd dimensional case Lemma \ref{lem:.dlambda is parallel}
is not available to us. If however we did assume that $\nabla(d\lambda)=0$
then we could dramatically simplify the treatment of the odd-dimensional
case both in this section and in Section \ref{sec:Proof-of-Theorem A}.
Indeed, whilst in the odd-dimensional case the characteristic polynomial
$p(t)$ of $L^{u}$ no longer necessarily admits a real eigenvalue,
it is reducible over $\mathbb{R}$ to a product of quadratic factors,
say $p(t)=q_{1}(t)\dots q_{k}(t)$. We may then define sub-bundles
$P_{i}:=\ker\, q_{i}(L^{u})$ of $E^{u}$, which have constant non-zero
dimension. Since $\nabla(d\lambda)=0$ these are parallel and hence
integrable. We thank the referee for this observation.\end{rem}

Let $E_{\mathbb{C}}^{u}:=E^{u}\otimes\mathbb{C}$ denote the complexification
of $E^{u}$, and let $\mathbb{L}:E_{\mathbb{C}}^{u}\rightarrow E_{\mathbb{C}}^{u}$
denote the complex linear extension of $L^{u}$. Given $\rho\in\mathbb{C}$,
we set \[
Q_{\rho}(x):=\left\{ \xi\in E^{u}(x)\,:\,\xi=\mbox{Re}\,\zeta\mbox{ for some }\zeta\in E_{\mathbb{C}}^{u}\mbox{ with }\mathbb{L}_{x}\zeta=\rho\zeta\right\} .\]
 By the same arguments as those used in the proof of Lemma \ref{lem:eigenvalue lemma},
there exists an open $\phi_{t}$-invariant set $O\subseteq\Sigma$
with the property that for any fixed $\rho\in\mathbb{C}$, the dimension
of the subspaces $Q_{\rho}(x)$ is constant for $x\in O$. Thus $Q_{\rho}|O$
defines a continuous $\phi_{t}$-invariant subbundle of $E^{u}|O$
for all $\rho\in\mathbb{C}$.

Now define a new bundle $P_{\rho}$ over all of $\Sigma$ by setting
\begin{equation}
P_{\rho}(x):=\bigcap\left\{ H_{y,x}Q_{\rho}(y)\,:\, y\in O\cap W^{s}(x)\right\} .\label{eq:P ro}\end{equation}
 We call $P_{\rho}$ the \emph{holonomy intersection} of $Q_{\rho}$
(see \cite[p685]{Hamenstaedt1995}, as well as \cite[Section 8]{Feres1991}).
Note that $P_{\rho}$ is clearly holonomy invariant. The next result
is Lemma $2.7$ in \cite{Hamenstaedt1995}. \begin{thm} \label{thm:hamen2-1}For
all $\rho\in\mathbb{C}$, $P_{\rho}$ is a continuous $\phi_{t}$-invariant
subbundle of $E^{u}$ over $\Sigma$. \end{thm}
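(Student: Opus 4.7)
The plan is to establish three properties of $P_\rho$ in sequence: $\phi_t$-invariance, invariance under holonomy along strong-stable leaves, and constant rank. Once the rank is constant, continuity of $P_\rho$ as a subbundle of $E^u$ follows automatically from continuous dependence of the defining intersection on $x$.

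First I would check $\phi_t$-invariance. The bundle map $L^u$, and therefore its complex-linear extension $\mathbb{L}$, is $\phi_t$-invariant, so $d\phi_t$ sends $Q_\rho(y)$ bijectively onto $Q_\rho(\phi_t y)$. Combining with \eqref{eq:diff hol eq} one obtains
\[
d\phi_t\bigl(H_{y,x}Q_\rho(y)\bigr)=H_{\phi_t y,\phi_t x}Q_\rho(\phi_t y).
\]
Since $O$ is $\phi_t$-invariant and $\phi_t(W^s(x))=W^s(\phi_t x)$, the reparametrization $y\mapsto\phi_t y$ is a bijection between $O\cap W^s(x)$ and $O\cap W^s(\phi_t x)$; intersecting over these parameters gives $d\phi_t(P_\rho(x))=P_\rho(\phi_t x)$.

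Next I would check invariance under strong-stable holonomy. For any $z\in W^s(x)$ the stable leaves agree, $W^s(z)=W^s(x)$, so the indexing sets for $P_\rho(x)$ and $P_\rho(z)$ coincide. By Remark \ref{-indeo of gamma} the holonomy is independent of the path, so the cocycle identity $H_{x,z}\circ H_{y,x}=H_{y,z}$ holds and sends each term $H_{y,x}Q_\rho(y)$ to $H_{y,z}Q_\rho(y)$. Intersecting yields $H_{x,z}(P_\rho(x))=P_\rho(z)$. Combined with the preceding step, $P_\rho$ is invariant under holonomy along all of $\mathcal{W}^+$, and in particular the function $d(x):=\dim P_\rho(x)$ is constant along each weak-stable leaf.

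The main obstacle will be upgrading constancy of $d$ along leaves to constancy on all of $\Sigma$. Upper semi-continuity of $d$ is automatic (in finite dimensions, intersection dimensions can only jump up in limits). Since $(\Sigma,\omega)$ is a stable Anosov Hamiltonian structure, $\phi_t$ preserves the smooth volume $\lambda\wedge\omega^{n-1}$ of full support, and by Remark \ref{standard anosov rem} it is topologically transitive; a classical argument for volume-preserving transitive Anosov flows shows that $\mathcal{W}^+$ is then minimal, so every leaf $W^+(x)$ is dense in $\Sigma$. Density together with leafwise constancy and upper semi-continuity forces $d$ to be constant on $\Sigma$, and continuity of $P_\rho$ then follows.
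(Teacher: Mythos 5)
Your overall skeleton is sound and, in fact, quite close to the paper's: invariance under the flow and under stable holonomy (so the rank $d(x)=\dim P_{\rho}(x)$ is constant along weak-stable leaves), saturation of $\Sigma$ by dense (weak-) stable leaves, and a semicontinuity argument at a point where the minimal rank is attained. Your first two steps (flow-invariance via \eqref{eq:diff hol eq} and holonomy-invariance via the cocycle identity together with Remark \ref{-indeo of gamma}) are correct, and the minimality of $\mathcal{W}^{+}$ for a transitive Anosov flow is indeed the standard fact the paper implicitly invokes when it writes $\Sigma=\bigcup_{y\in V}W^{s}(y)$.

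The gap is that the two steps you declare ``automatic'' --- upper semicontinuity of $d$ and continuity of $P_{\rho}$ once the rank is constant --- are exactly the technical content of the paper's proof, and they are not automatic for an object of the form \eqref{eq:P ro}. The slogan ``intersection dimensions can only jump up in limits'' is valid for a \emph{fixed finite} family of continuously varying subspaces of constant dimensions; here the intersection is taken over an $x$-dependent, a priori infinite, index family $O\cap W^{s}(x)$, so one must first rule out that the constraints present at $x_{0}$ simply have no counterparts on nearby leaves (in which case $d$ could jump \emph{up} off $x_{0}$, destroying both semicontinuity and continuity). The paper handles this by the finite sub-intersection argument: by finite-dimensionality choose $x_{1},\dots,x_{\ell}\in O\cap W^{s}(x_{0})$ with $P_{\rho}(x_{0})=\bigcap_{i}H_{x_{i},x_{0}}Q_{\rho}(x_{i})$, use openness of $O$, continuity of the stable foliation, of $Q_{\rho}$ on $O$ and of the holonomies to continue each $x_{i}$ to a point $y_{i}\in O\cap W^{s}(y_{0})$ for every $y_{0}$ near $x_{0}$, and observe that the continuously varying finite intersection $\bigcap_{i}H_{y_{i},y_{0}}Q_{\rho}(y_{i})$ contains $P_{\rho}(y_{0})$ and has dimension at most $K$ near $x_{0}$; minimality of $K$ then forces equality, which gives both the semicontinuity you need and the local continuity of $P_{\rho}$ in one stroke. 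Your argument becomes complete once you insert this local persistence-of-constraints step; without it, ``continuous dependence of the defining intersection on $x$'' is precisely what is being asserted rather than proved.
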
 \begin{proof} Let
\[
K:=\min\left\{ \dim\, P_{\rho}(x)\,:\, x\in\Sigma\right\} .\]
 Since the dimension of $K$ is constant along the leaves of $\mathcal{W}^{s}$,
there exists $x_{0}\in O$ with $\dim P_{\rho}(x_{0})=K$. We can
choose points $x_{1},\dots,x_{\ell}\in W^{s}(x_{0})\cap O$ such that
\[
P_{\rho}(x_{0})=\bigcap_{i=1}^{\ell}H_{x_{i},x_{0}}Q_{\rho}(x_{i}).\]
 Since $Q_{\rho}$ and the holonomy maps $H_{x_{i},x_{0}}$ are continuous
there exists an open neighborhood $O_{0}\subseteq O\cap W^{-}(x_{0})$
of $x_{0}$ such that the following holds. Given $y_{0}\in O_{0}$
there exist open neighborhoods $U_{i}$ of $x_{i}$ such that the
connected component of $x_{i}$ in the set $U_{i}\cap W^{s}(y_{0})\cap W^{u}(x_{i})$
consists of a single point $y_{i}$, and moreover that \[
\dim\,\bigcap_{i=1}^{\ell}H_{y_{i},y_{0}}Q_{\rho}(y_{0})\leq K.\]
 By minimality of $K$ this forces\[
P_{\rho}(y_{0})=\bigcap_{i=1}^{\ell}H_{y_{i},y_{0}}Q_{\rho}(y_{0});\]
 in particular this proves we can find a neighborhood $V$ of $x_{0}$
in $\Sigma$ such that the restriction of $P_{\rho}$ to $V$ is a
continuous subbundle of $E^{u}|V$. But then since \[
\Sigma=\bigcup_{y\in V}W^{s}(y),\]
 it follows that $P_{\rho}$ is a continuous subbundle of $E^{u}$
over all of $\Sigma$. \end{proof} Unfortunately, it is not necessarily
the case that $P_{\rho}$ is of positive dimension. To ensure this,
we will need to choose $\rho\in\mathbb{C}$ carefully. Here is the
general idea; the precise construction (due to Hamenst\"adt \cite[Section 2]{Hamenstaedt1995})
is somewhat technical. We choose a periodic point $q$ of $\phi_{t}$,
with period $T>0$, say. Then $d_{q}\phi_{T}:T_{q}\Sigma\rightarrow T_{q}\Sigma$
induces a map $A_{q}^{u}:E^{u}(q)\rightarrow E^{u}(q)$. Let $\mathbb{A}_{q}$
denote the complex linear extension of $A_{q}^{u}$ to a map $E_{\mathbb{C}}^{u}(q)\rightarrow E_{\mathbb{C}}^{u}(q)$,
and let $\sigma\in\mathbb{R},\sigma>1$ denote the minimal absolute
value of an eigenvalue of $\mathbb{A}_{q}$. We are interested in
the subspace $S(q)\subseteq E^{u}(q)$ consisting of the subspace
spanned by the union of the eigenspaces of $\mathbb{A}_{q}$ corresponding
to eigenvalues of absolute value $\sigma$.

We then use holonomy transport to carry $S(q)$ to subspaces $S(x)\subseteq E^{u}(x)$
for $x\in W^{+}(q)$, thus creating a distribution $S$ over $E^{u}|W^{+}(q)$.\\

What is the point of this construction? Suppose now $q\in O$ (where
$O$ is the open set defined earlier on which $Q_{\rho}$ is a continuous
subbundle of $E^{u}|O$ for all $\rho\in\mathbb{C}$ - such $q$ always
exist since the set of periodic points of $\phi_{t}$ is dense in
$\Sigma$, see Remark \ref{standard anosov rem}). Then one shows
that $L_{q}^{u}:E^{u}(q)\rightarrow E^{u}(q)$ preserves the subbundle
$S(q)$, and thus $\mathbb{L}_{q}:E_{\mathbb{C}}^{u}(q)\rightarrow E_{\mathbb{C}}^{u}(q)$
preserves the complexification $S_{\mathbb{C}}(q)\subseteq E_{\mathbb{C}}^{u}(q)$.
Moreover if $\rho_{0}$ is an eigenvalue of $\mathbb{L}_{q}|S_{\mathbb{C}}(q)$,
we will show that the subspace \[
Q_{\rho_{0}}(q)\cap S(q)\subseteq E^{u}(q)\]
 (which is necessarily of positive dimension) is contained in $P_{\rho_{0}}(q)$;
in other words, for this choice of $\rho_{0}$, $P_{\rho_{0}}$ is
of positive dimension, and this gives us our desired subbundle of
$E^{u}$ for the case where $n$ is odd.\\

We will now begin with the details of the construction.\\

Since $q$ is periodic of period $T$, $d_{q}\phi_{T}:T_{q}\Sigma\rightarrow T_{q}\Sigma$
defines a hyperbolic linear map $A_{q}:E(q)\rightarrow E(q)$ which
preserves $E^{s}(q)$ and $E^{u}(q)$, and so defines maps $A_{q}^{s}:E^{s}(q)\rightarrow E^{s}(q)$
and $A_{q}^{u}:E^{u}(q)\rightarrow E^{u}(q)$. Let $\mathbb{A}_{q}:E_{\mathbb{C}}^{u}(q)\rightarrow E_{\mathbb{C}}^{u}(q)$
denote the linear map induced by $A_{q}^{u}$. Let $\sigma\in\mathbb{R}$
denote the smallest absolute value of an eigenvalue of $\mathbb{A}_{q}$
(note $\sigma>1$ as $d\phi_{t}|E^{u}$ is expanding).

Now let us set \[
S(q):=\mbox{span}\left\{ \xi\in E^{u}(q)\,:\,\exists\,\zeta\in E_{\mathbb{C}}^{u}(q),\rho\in\mathbb{C},\left|\rho\right|=\sigma,\mbox{ with }\xi=\mbox{Re}\,\zeta,\ \mathbb{A}_{q}\zeta=\rho\zeta\right\} .\]

Then for $x\in W^{+}(q)$ we define \[
S(x):=H_{q,x}[S(q)].\]
 Then $S$ is a $C^{1}$-subbundle of $E^{u}|W^{+}(q)$ and moreover
using \eqref{eq:diff hol eq} and Lemma \ref{The-holonomy-transport=00003D00003D00003Ddif}
we see $d_{x}\phi_{t}[S(x)]\subseteq S(\phi_{t}x)$ for all $x\in W^{+}(q)$;
in particular $A_{q}^{u}$ maps $S(x)$ to itself.\\
 \\
 We now make a little digression into some elementary linear algebra.
Given an invertible complex linear endomorphism $A:\mathbb{C}^{m}\rightarrow\mathbb{C}^{m}$,
decompose $\mathbb{C}^{m}=\bigoplus_{j=1}^{k}V^{\rho_{j}}$ into the
root spaces of $A$, and let $\left\{ \xi_{1}^{\rho_{1}},\dots,\xi_{d(\rho_{1})}^{\rho_{1}},\xi_{1}^{\rho_{2}},\dots,\xi_{d(\rho_{2})}^{\rho_{2}},\dots,\xi_{1}^{\rho_{k}},\dots,\xi_{d(\rho_{k})}^{\rho_{k}}\right\} $
denote a basis of $\mathbb{C}^{m}$ such that $A$ is in Jordan normal
form with respect to this basis. That is, we have $d(\rho_{j})=\dim\, V^{\rho_{j}}$,
$A\xi_{1}^{\rho_{j}}=\rho_{j}\xi_{1}^{\rho_{j}}$ for $j=1,\dots,k$
and $A\xi_{i}^{\rho_{j}}=\xi_{1}^{\rho_{j}}+\xi_{2}^{\rho_{j}}+\dots+\xi_{i-1}^{\rho_{j}}+\rho_{j}\xi_{i}^{\rho_{j}}$
for $i>1$ and $j=1,\dots,k$. Now set \[
\eta_{i}^{\rho_{j}}:=\frac{\rho_{j}}{\left|\rho_{j}\right|}\xi_{i}^{\rho_{j}},\]
 and define a Hermitian inner product $\left\langle \cdot,\cdot\right\rangle $
on $\mathbb{C}^{m}$ by declaring $\{\eta_{i}^{\rho_{j}}\}$ to be
a unitary basis.

Let $\sigma:=\min\{\left|\rho_{j}\right|\,:j=1\dots,k\}$, and set
\[
S:=\mbox{span}_{\mathbb{C}}\left\{ \xi_{1}^{\rho_{j}}\,:\,\left|\rho^{j}\right|=\sigma\right\} .\]
 Then for any $\xi,\eta\in S$ we have \[
\left\langle A\xi,A\eta\right\rangle =\sigma^{2}\left\langle \xi,\eta\right\rangle ,\]
 The following result is essentially due to Hamenst\"adt.

\begin{lem}Let $\xi,\eta\in\mathbb{C}^{n}$, with $\eta\in S$, $\eta\ne0$
and $\xi\notin S$. Then \begin{equation}
\lim_{k\rightarrow\infty}\frac{\left|A^{k}\xi\right|}{\left|A^{k}\eta\right|}=\infty.\label{eq:required limit}\end{equation}
 \end{lem}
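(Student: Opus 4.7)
My plan is to reduce the statement to a direct computation using the Jordan decomposition, exploiting the key feature that in the chosen inner product the root spaces are pairwise orthogonal and $A$-invariant, so there is no cross-cancellation between components belonging to different generalized eigenspaces.

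The first step is to observe that for $\eta \in S$ we have the exact equality $|A^k \eta| = \sigma^k |\eta|$. Indeed, $S$ is spanned by eigenvectors $\xi_1^{\rho_j}$ with $|\rho_j| = \sigma$, and these are (up to the unimodular scalars $\rho_j/|\rho_j|$) unitary basis vectors. Iterating the identity $\langle A \xi, A \eta \rangle = \sigma^2 \langle \xi, \eta \rangle$ on $S$ gives $\langle A^k \eta, A^k \eta \rangle = \sigma^{2k} \langle \eta, \eta \rangle$. Thus \eqref{eq:required limit} reduces to showing $|A^k \xi|/\sigma^k \to \infty$ whenever $\xi \notin S$.

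For the numerator, I decompose $\xi = \sum_j \xi^{(j)}$ along the root spaces $V^{\rho_j}$. Since $\{\eta_i^{\rho_j}\}$ is a unitary basis, distinct root spaces are orthogonal with respect to $\langle \cdot, \cdot \rangle$, and they are $A$-invariant. Hence
\[
|A^k \xi|^2 = \sum_j |A^k \xi^{(j)}|^2,
\]
so it is enough to bound any single term from below. On $V^{\rho_j}$ the operator splits as $A = \rho_j I + N_j$ with $N_j$ nilpotent, and for any nonzero $\xi^{(j)}$, letting $m_j$ be the largest integer with $N_j^{m_j} \xi^{(j)} \neq 0$, a standard binomial expansion gives
\[
A^k \xi^{(j)} = \rho_j^k \sum_{\ell=0}^{m_j} \binom{k}{\ell} \rho_j^{-\ell} N_j^\ell \xi^{(j)},
\]
whose dominant term as $k \to \infty$ has norm asymptotic to a positive constant times $|\rho_j|^k k^{m_j}$.

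Now the hypothesis $\xi \notin S$ forces one of two alternatives: either (a) some $\xi^{(j)} \neq 0$ with $|\rho_j| > \sigma$, or (b) every nonzero $\xi^{(j)}$ has $|\rho_j| = \sigma$ but at least one of them satisfies $m_j \geq 1$, that is, has a nontrivial nilpotent component (otherwise $\xi$ would lie in the span of the $\xi_1^{\rho_j}$ with $|\rho_j| = \sigma$, contradicting $\xi \notin S$). In case (a) the corresponding term already gives $|A^k \xi| \gtrsim |\rho_j|^k$ with $|\rho_j| > \sigma$, so the ratio blows up exponentially. In case (b) the term gives $|A^k \xi| \gtrsim \sigma^k k^{m_j}$ with $m_j \geq 1$, so the ratio blows up polynomially. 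Either way, \eqref{eq:required limit} follows.

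The only subtlety worth flagging is verifying that $\xi \notin S$ really does force one of the two alternatives above; the logical structure is straightforward, but it relies on the precise definition of $S$ as the span of the $\xi_1^{\rho_j}$ with $|\rho_j| = \sigma$ (the length-one Jordan chains at the minimal-modulus eigenvalues), so the negation $\xi \notin S$ is exactly the disjunction of (a) and (b). Beyond that the proof is routine linear algebra; the orthogonality of root spaces afforded by the carefully chosen Hermitian inner product is what makes the componentwise lower bound effective.
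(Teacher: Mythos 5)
Your proof is correct, and it is worth noting how it differs from the paper's. The paper quotes Hamenst\"adt's asymptotic formula $\lim_k A^k\zeta/(\rho^k k^{j-1})=\frac{1}{\rho^{j-1}(j-1)!}(A-\rho\,\mathrm{Id})^{j-1}\zeta$ for a single root vector, derives from it the two pairwise comparisons (root vectors with eigenvalues of different modulus, and root vectors of different chain depth at the same eigenvalue), and then simply asserts ``this implies the lemma,'' leaving the passage from comparisons of individual root vectors to arbitrary $\xi\notin S$ and $\eta\in S$ implicit. You fill exactly that gap: the identity $\langle A\xi',A\eta'\rangle=\sigma^2\langle\xi',\eta'\rangle$ on the $A$-invariant subspace $S$ gives the exact value $|A^k\eta|=\sigma^k|\eta|$ for the denominator, and the orthogonality of the root spaces in the specially constructed Hermitian product lets you estimate the numerator componentwise via the binomial expansion $A^k=(\rho_j\mathrm{Id}+N_j)^k$ on each block, recovering the same growth rates $|\rho_j|^k k^{m_j}$ that underlie Hamenst\"adt's formula but in a self-contained way. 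Your flagged subtlety is the right one and is harmless here: with the normal form as stated in the paper (a single chain per listed eigenvalue, so $\ker N_j=\mathbb{C}\xi_1^{\rho_j}$), the failure of both of your alternatives (a) and (b) forces every nonzero component of $\xi$ to be a multiple of some $\xi_1^{\rho_j}$ with $|\rho_j|=\sigma$, i.e.\ $\xi\in S$; this is the same structural fact the paper's argument tacitly relies on. In short, your route buys a complete, elementary proof that also makes visible why the auxiliary inner product was constructed the way it was, at the cost of not reusing the quoted Corollary 2.3.
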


\begin{proof}The crux of the proof is the following formula, whose
proof can be found in \cite[Corollary 2.3]{Hamenstaedt1995}. Suppose
$\zeta\in\mathbb{C}^{n}$ is a root vector for $A$ with eigenvalue
$\rho\ne0$, that is, there exists $j\in\mathbb{N}$ such that\[
(A-\rho\mbox{\mbox{Id}})^{j}\zeta=0\ \ \ \mbox{but}\ \ \ (A-\rho\mbox{\mbox{Id}})^{j-1}\zeta\ne0.\]
 Then \[
\lim_{k\rightarrow\infty}\frac{A^{k}\zeta}{\rho^{k}k^{j-1}}=\frac{1}{\rho^{j-1}(j-1)!}(A-\sigma\mbox{Id})^{j-1}\zeta.\]
 In particular, if either: 
\begin{enumerate}
\item $\rho,\rho'$ are eigenvalues of $A$ with $\left|\rho\right|<\left|\rho'\right|$
and $\zeta,\zeta'\in\mathbb{C}^{N}$ are root vectors for $A$ with
eigenvalues $\rho,\rho'$ or 
\item $\zeta$ and $\zeta'$ are both root vectors for $A$ with eigenvalue
$\rho$, such that there exists $j\in\mathbb{N}$ with \[
(A-\rho\mbox{Id})^{j}\zeta=0\ \ \ \mbox{but}\ \ \ (A-\rho\mbox{Id})^{j}\zeta'\ne0,\]

\end{enumerate}
then \[
\lim_{k\rightarrow\infty}\frac{\left|A^{k}\zeta'\right|}{\left|A^{k}\zeta\right|}=\infty.\]
 This implies the lemma.\end{proof}

We return now to the problem at hand and prove the following result
(\cite[Lemma 2.5]{Hamenstaedt1995}). \begin{prop} \label{lu preserves}The
map $L^{u}$ preserves the bundle $S$ over $E^{u}|W^{+}(q)$. \end{prop}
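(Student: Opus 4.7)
The strategy is to first verify the claim at the periodic base point $q$ and then propagate it to arbitrary $x \in W^+(q)$ by an iteration argument driven by the dynamics of $A^u_q$.

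\emph{Stage 1: $L^u_q$ preserves $S(q)$.} Since $L^u$ is $\phi_t$-invariant and $A^u_q = d_q\phi_T|_{E^u(q)}$, the maps $L^u_q$ and $A^u_q$ commute on $E^u(q)$, hence so do their complexifications $\mathbb{L}_q$ and $\mathbb{A}_q$ on $E^u_{\mathbb C}(q)$. Suppose, for contradiction, that some non-zero $\eta \in S(q)$ had $L^u_q \eta \notin S(q)$. Applying the preceding growth-rate lemma with $A = \mathbb{A}_q$, this $\eta$, and $\xi = L^u_q \eta \notin S$ would give $|\mathbb{A}_q^k L^u_q \eta|/|\mathbb{A}_q^k \eta| \to \infty$. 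But commutativity lets us pull $L^u_q$ past $\mathbb{A}_q^k$, so the numerator equals $|L^u_q \mathbb{A}_q^k \eta| \leq \|L^u_q\|\,|\mathbb{A}_q^k \eta|$, bounding the ratio by $\|L^u_q\|$: contradiction.

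\emph{Stage 2: $L^u_x$ preserves $S(x)$ for $x \in W^+(q)$.} The property $L^u_x(S(x))\subseteq S(x)$ is $\phi_t$-invariant, because $S$ is $d\phi_t$-equivariant (by the equivariance of the holonomy with $d\phi_t$, together with Lemma~\ref{The-holonomy-transport=00003D00003D00003Ddif}) and $L^u$ is $\phi_t$-invariant; since $W^+(q) = \bigcup_{t\in\mathbb{R}} W^s(\phi_t q)$, we may replace $q$ by $\phi_{t_0}q$ and thus assume $x \in W^s(q)$. Set $y_k := \phi_{kT} x \in W^s(q)$, so that $y_k \to q$ exponentially. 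Combining the holonomy equivariance \eqref{eq:diff hol eq} with $\phi_{kT}q = q$ and the $\phi_t$-invariance of $L^u$ gives, after canceling consecutive $d\phi_{\pm kT}$ factors,
\[
H_{x,q}\circ L^u_x\circ H_{q,x} \;=\; (A^u_q)^{-k}\circ\bigl(H_{y_k,q}\circ L^u_{y_k}\circ H_{q,y_k}\bigr)\circ (A^u_q)^k
\qquad\text{on } E^u(q),
\]
for every $k \geq 0$. By continuity of $L^u$ and of the holonomy, the middle factor equals $L^u_q + \epsilon_k$ with $\|\epsilon_k\| \to 0$ exponentially; since $L^u_q$ commutes with $A^u_q$ (Stage~1 input), this simplifies to
\[
H_{x,q}\circ L^u_x\circ H_{q,x} \;=\; L^u_q + (A^u_q)^{-k}\,\epsilon_k\,(A^u_q)^k.
\]
Restricted to $S(q)$, $(A^u_q)^k$ grows at most like $k^{d_1}\sigma^k$ and $(A^u_q)^{-k}$ decays like $k^{d_2}\sigma^{-k}$, so the error term is bounded by $k^{d_1+d_2}\|\epsilon_k\|\to 0$. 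In the limit $H_{x,q}\circ L^u_x\circ H_{q,x} = L^u_q$ on $S(q)$, and combining with Stage~1 we obtain $L^u_x(S(x)) = L^u_x\circ H_{q,x}(S(q)) \subseteq H_{q,x}(S(q)) = S(x)$.

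\emph{Main obstacle.} The delicate point is Stage~2, namely the cancellation in $(A^u_q)^{-k}\epsilon_k(A^u_q)^k$ as $k\to\infty$. The crucial feature is that $S(q)$ consists precisely of the eigen-directions of $A^u_q$ with minimal modulus $\sigma$, so the two exponential factors $\sigma^{\pm k}$ cancel exactly and only polynomial Jordan-block corrections remain; these are overwhelmed by the exponential decay of $\|\epsilon_k\|$, itself a consequence of the exponential convergence $y_k \to q$ in $W^s(q)$ and the (at least Lipschitz) continuity of $L^u$ and of the weak-stable holonomy. On directions outside $S(q)$ this cancellation fails, which is exactly why the statement is restricted to $S$.
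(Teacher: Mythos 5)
Your argument is correct, but the way you pass from the periodic point to a general $x\in W^{s}(q)$ is genuinely different from the paper's. Your Stage 1 is essentially the paper's mechanism specialized to $x=q$: $\phi_{T}$-invariance gives commutation of $L^{u}_{q}$ with $A^{u}_{q}$, and the growth lemma then forces $L^{u}_{q}(S(q))\subseteq S(q)$. For general $x$, however, the paper does not conjugate back to $q$ and take a limit; instead it fixes the holonomy-adapted norms $\left|\cdot\right|_{x}$, observes via \eqref{eq:important H+} that the ratio $\left|d_{x}\phi_{kT}\xi\right|/\left|d_{x}\phi_{kT}\eta\right|$ blows up whenever $\xi\notin S(x)$ and $0\neq\eta\in S(x)$, and derives a contradiction from the fact that the operator norm of $L^{u}$ is uniformly bounded (by continuity and compactness) on the ball $B\subset W^{s}(q)$, which contains $\phi_{kT}x$ for $k$ large. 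That argument needs only continuity of $L^{u}$ and no rate of convergence at all. Your Stage 2, by contrast, hinges on showing $k^{d}\Vert\epsilon_{k}\Vert\to0$, i.e. on a super-polynomial (in practice exponential) rate for $\Vert H_{y_{k},q}\circ L^{u}_{y_{k}}\circ H_{q,y_{k}}-L^{u}_{q}\Vert$; the bare ``continuity'' you invoke in the displayed step is not enough once $A^{u}_{q}$ has nontrivial Jordan blocks at modulus $\sigma$. The needed rate is in fact available, as you indicate in your closing paragraph: $y_{k}\to q$ exponentially along $W^{s}(q)$, $L^{u}$ is a $C^{1}$ bundle map, and $H_{q,y_{k}}$ coincides with Kanai parallel transport along an exponentially short path with bounded ($C^{0}$) connection coefficients, hence is exponentially close to the identity in a $C^{1}$ local frame; but this quantitative input must be argued, and it is exactly what the paper's compactness argument sidesteps. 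In compensation, your limiting identity $H_{x,q}\circ L^{u}_{x}\circ H_{q,x}=L^{u}_{q}$ on $S(q)$ is stronger than the proposition itself: it is precisely the holonomy-commutation statement proved separately in the paper as Proposition \ref{prop:comm}, so your route, once the Lipschitz/H\"older point is written out, yields both results at once.
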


Before getting started on the proof, we introduce an auxilliary inner
product that will be helpful for this result and its sequel. Let us
fix an inner product $\left\langle \cdot,\cdot\right\rangle _{q}$
on $E^{u}(q)$ with the property that if $\xi,\eta\in S(q)$ then
$\left\langle A_{q}^{u}\xi,A_{q}^{u}\eta\right\rangle _{q}=\sigma^{2}\left\langle \xi,\eta\right\rangle _{q}$
(such an inner product exists by the discussion above). Then extend
$\left\langle \cdot,\cdot\right\rangle _{q}$ to an inner product
$\left\langle \cdot,\cdot\right\rangle _{x}$ for all $x\in W^{s}(q)$
by\[
\left\langle \xi,\eta\right\rangle _{x}:=\left\langle H_{x,q}(\xi),H_{x,q}(\eta)\right\rangle _{q}\ \ \ \mbox{for }x\in W^{s}(q),\;\xi,\eta\in E^{u}(x).\]
 Let $\left|\cdot\right|_{x}$ denote the norm induced by $\left\langle \cdot,\cdot\right\rangle _{x}$
for $x\in W^{s}(q)$.

\begin{proof} \emph{(of Proposition \ref{lu preserves})}

First note that by \eqref{eq:diff hol eq} for any $x\in W^{s}(q)$
and $k\in\mathbb{Z}$, \begin{equation}
H_{\phi_{kT}x,q}\circ d_{x}\phi_{kT}=\left(A_{q}^{s}\right)^{k}\circ H_{x,q}\mbox{ as maps }E^{u}(x)\rightarrow E^{u}(q).\label{eq:important H+}\end{equation}
 Thus if $\xi\in E^{u}(x)$, $\xi\notin S(x)$ and $\eta\in S(x),\eta\ne0$
we have \begin{eqnarray*}
\frac{\left|d_{x}\phi_{kT}(\xi)\right|_{\phi_{kT}x}}{\left|d_{x}\phi_{kT}(\eta)\right|_{\phi_{kT}x}} & = & \frac{\left|H_{\phi_{kT}x,q}\bigl(d_{x}\phi_{kT}(\xi)\bigr)\right|_{q}}{\left|H_{\phi_{kT}x,q}\bigl(d_{x}\phi_{kT}(\eta)\bigr)\right|_{q}}\\
 & = & \frac{\left|\left(A_{q}^{u}\right)^{k}H_{x,q}(\xi)\right|_{q}}{\left|\left(A_{q}^{u}\right)^{k}H_{x,q}(\eta)\right|_{q}},\end{eqnarray*}
 and then since $H_{x,q}(\eta)\in S(q)$ but $H_{x,q}(\xi)\notin S(q)$,
the previous lemma tells us that\[
\lim_{k\rightarrow\infty}\frac{\left|d_{x}\phi_{kT}(\xi)\right|_{\phi_{kT}x}}{\left|d_{x}\phi_{kT}(\eta)\right|_{\phi_{kT}x}}=\infty.\]
 Now let $B$ denote the ball \[
B=\left\{ x\in W^{s}(q)\,:\,\mbox{dist}(q,x)\leq1\right\} .\]
 Using continuity and compactness of $B$, we see that the operator
norm of $L^{u}$ with respect to the norm $\left|\cdot\right|$ on
$E^{u}|B$ is uniformly bounded on $B$.

Suppose now for contradiction that there exists $x\in W^{s}(q)$ and
$\xi\in S(x)$ with $L_{x}^{u}\xi\notin S(x)$. Then since $L_{x}^{u}$
is $\phi_{t}$-invariant we have \[
L_{\phi_{kT}x}^{u}\left(d_{x}\phi_{kT}\left(\xi\right)\right)=d_{x}\phi_{kT}\left(L_{x}^{u}\xi\right)\]
 for all $k\geq0$, and thus \[
\lim_{k\rightarrow\infty}\frac{\left|L_{\phi_{kT}x}^{u}\bigl(d_{x}\phi_{kT}\left(\xi\right)\bigr)\Bigr)\right|_{\phi_{kT}x}}{\left|d_{x}\phi_{kT}\left(\xi\right)\right|_{\phi_{kT}x}}=\infty.\]
 In other words, the operator norms with respect to $\left|\cdot\right|$
of the endomorphisms $L_{\phi_{kT}x}^{u}$ of $E^{u}(\phi_{kT}x)$
tends to infinity as $k\rightarrow\infty$, contradicting the fact
that $\phi_{kT}x\in B$ for $k$ large enough.

Thus $L^{u}$ preserves $S$ over $E^{u}|W^{s}(q)$. To complete the
proof we need to show $L^{u}$ preserves $S$ over all of $E^{u}|W^{+}(q)$.
This however is clear, since $L^{u}$ is $\phi_{t}$-invariant.\end{proof}
\begin{prop} Given $x\in W^{+}(q)$ we have \[
L_{x}^{u}=H_{q,x}\circ L_{q}^{u}\circ H_{x,q}\mbox{ as maps }S(x)\rightarrow S(x).\]
 In other words, when restricted to the subbundle $S$ over $W^{+}(q)$,
the map $L^{u}$ `commutes with holonomy'. \label{prop:comm}\end{prop}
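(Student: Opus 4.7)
The plan is to introduce the ``defect'' operator
\[
D_x := L_x^u - H_{q,x} \circ L_q^u \circ H_{x,q},
\]
which is a well-defined endomorphism of $S(x)$ for $x \in W^+(q)$ by Proposition \ref{lu preserves} together with the fact that $S(\cdot) = H_{q,\cdot}[S(q)]$. Trivially $D_q = 0$. I would first establish $D_x = 0$ for $x \in W^s(q)$ via a contraction-type argument driven by iteration under the flow, and then extend to the full weak stable manifold $W^+(q)$ using $\phi_t$-invariance.

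The first key step is the $\phi_{kT}$-equivariance identity
\[
D_{\phi_{kT}x} \circ d_x\phi_{kT} = d_x\phi_{kT} \circ D_x \quad \text{for } x \in W^s(q),\ k \in \mathbb{Z}.
\]
For the $L^u$-piece this is simply the $\phi_t$-invariance of $L^u$. For the holonomy piece, inverting \eqref{eq:important H+} yields $H_{q,\phi_{kT}x} = d_x\phi_{kT} \circ H_{q,x} \circ (A_q^u)^{-k}$, and the factors $(A_q^u)^{\pm k}$ cancel around $L_q^u$ because $L_q^u$ and $A_q^u$ commute (both are $\phi_T$-invariant endomorphisms of $E^u(q)$).

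The second and crucial step uses the auxiliary norm $|\cdot|_x$ on $E^u$ over $W^s(q)$ introduced before the proof of Proposition \ref{lu preserves}. On $S(q)$ the map $A_q^u$ acts as an exact conformal scaling by $\sigma$, and via \eqref{eq:important H+} the same holds for $d_x\phi_{kT}|_{S(x)}$, namely $|d_x\phi_{kT}\xi|_{\phi_{kT}x} = \sigma^k |\xi|_x$. Combining this with the equivariance,
\[
\sigma^k |D_x\xi|_x = |D_{\phi_{kT}x}(d_x\phi_{kT}\xi)|_{\phi_{kT}x} \leq \|D_{\phi_{kT}x}\|_{\mathrm{op}} \cdot \sigma^k |\xi|_x,
\]
so $|D_x\xi|_x \leq \|D_{\phi_{kT}x}\|_{\mathrm{op}}\,|\xi|_x$. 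Since $x \in W^s(q)$ forces $\phi_{kT}x \to q$ as $k \to \infty$, continuity of $L^u$, of the holonomy maps, and of the subbundle $S$ gives $\|D_{\phi_{kT}x}\|_{\mathrm{op}} \to 0$, and hence $D_x = 0$.

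Finally, any $x \in W^+(q)$ has the form $x = \phi_t y$ with $y \in W^s(q)$, and combining Lemma \ref{The-holonomy-transport=00003D00003D00003Ddif} with the flow equivariance of holonomy from \eqref{eq:diff hol eq} yields the decomposition $H_{q,x} = d_y\phi_t \circ H_{q,y}$ and $H_{x,q} = H_{y,q} \circ (d_y\phi_t)^{-1}$. The identity at $x$ then follows from that at $y$ together with the $\phi_t$-invariance $L_x^u = d_y\phi_t \circ L_y^u \circ (d_y\phi_t)^{-1}$. The delicate point throughout is the second step: the contraction argument depends crucially on having $A_q^u$ scale by \emph{exactly} $\sigma$ on $S(q)$ so that the $\sigma^k$ factors cancel cleanly; any bounded multiplicative error would produce a constant in the bound that could fail to vanish.
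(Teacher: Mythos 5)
Your proposal is correct and follows essentially the same route as the paper: your defect operator $D_x$ is the paper's $C_x$, and the key step — using the auxiliary inner product in which $d_x\phi_{kT}$ scales $S$ by exactly $\sigma^k$, so that iteration at the periodic point combined with $\phi_{kT}x\to q$ and continuity forces the defect to vanish on $W^s(q)$, then extending to $W^+(q)$ by flow invariance — is precisely the paper's argument (the paper phrases it as exact $\phi_T$-invariance of the operator norm $\beta(x)=\Vert C_x\Vert_x$, while you only use the inequality direction, which suffices). Your explicit verification of the equivariance of the holonomy term via \eqref{eq:important H+} and the commutation of $L_q^u$ with $A_q^u$ is a detail the paper leaves implicit, not a different method.
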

\begin{proof} As before, since $L^{u}$ commutes with $d\phi_{t}$
it suffices to verify the assertion for $x\in W^{s}(q)$. Thus given
$x\in W^{s}(q)$, define $C_{x}\in\mbox{End}\, S$ by \[
C_{x}=L_{x}^{u}-H_{q,x}\circ L_{q}^{u}\circ H_{x,q}.\]
 To complete the proof we show that $C_{x}=0$ for all $x\in W^{s}(q)$.
To do this we shall show that the function $\beta$ defined on $W^{s}(q)$
by \[
\beta(x)=\left\Vert C_{x}\right\Vert _{x}\]
 (where here $\left\Vert \cdot\right\Vert _{x}$ denotes the operator
norm on $E^{u}(x)$ with respect to $\left\langle \cdot,\cdot\right\rangle _{x}$)
is invariant under $\phi_{T}$. Since $\beta$ is continuous and $\beta(q)=0$
it then follows $\beta$ is identically zero, and hence $C_{x}=0$
for all $x\in W^{s}(q)$.

Since $L^{u}$ is $\phi_{t}$-invariant we have \[
C_{\phi_{T}x}\bigl(d_{x}\phi_{T}(\xi)\bigr)=d_{x}\phi_{T}\bigl(C_{x}(\xi)\bigr)\]
 for all $x\in W^{s}(q)$ and $\xi\in S(x)$. Since (using \eqref{eq:important H+})

\begin{eqnarray*}
\left|d_{x}\phi_{T}(\xi)\right|_{x} & = & \left|H_{x,q}\bigl(d_{x}\phi_{T}(\xi)\bigr)\right|_{q}\\
 & = & \left|A_{q}^{u}\bigl(H_{x,q}^{+}(\xi)\bigr)\right|_{q}\\
 & = & \sigma\left|H_{x,q}^{+}(\xi)\right|_{q}\\
 & = & \sigma\left|\xi\right|_{x},\end{eqnarray*}
 for all $\xi\in S(x)$ and $x\in W^{s}(q)$, for $k\geq0$ we have\begin{eqnarray*}
\frac{\left|C_{\phi_{kT}x}\circ d_{x}\phi_{kT}(\xi)\right|_{\phi_{kT}x}}{\left|d_{x}\phi_{kT}(\xi)\right|_{\phi_{kT}x}} & = & \frac{\left|d_{x}\phi_{kT}\circ C_{x}(\xi)\right|_{\phi_{kT}x}}{\sigma^{k}\left|\xi\right|_{x}}\\
 & = & \frac{\left|C_{x}(\xi)\right|_{x}}{\left|\xi\right|_{x}};\end{eqnarray*}
 which proves the assertions about the map $\beta$ stated above.
\end{proof} The point of all this work is the following result. \begin{cor}
\label{thm:hamen2}Let $\rho_{0}\in\mathbb{C}$ be an eigenvalue of
$\mathbb{L}_{q}|S_{\mathbb{C}}(q)$. Then the dimension of $P_{\rho_{0}}$
is strictly positive.\end{cor}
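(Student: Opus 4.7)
The plan is to exhibit a concrete nonzero vector lying in $P_{\rho_{0}}(q)$. Since Theorem \ref{thm:hamen2-1} shows that $P_{\rho_{0}}$ is a continuous $\phi_{t}$-invariant subbundle of $E^{u}$, demonstrating that its fiber over $q$ is nontrivial suffices to conclude that the rank is strictly positive everywhere.

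The candidate will come from the intersection $Q_{\rho_{0}}(q)\cap S(q)$. Since $\rho_{0}$ is an eigenvalue of $\mathbb{L}_{q}|S_{\mathbb{C}}(q)$, one can pick $\zeta\in S_{\mathbb{C}}(q)\setminus\{0\}$ with $\mathbb{L}_{q}\zeta=\rho_{0}\zeta$; writing $\zeta=\xi_{1}+i\xi_{2}$ with $\xi_{1},\xi_{2}\in S(q)$ (which is legitimate because $S(q)\subset E^{u}(q)$ is a real subspace and $S_{\mathbb{C}}(q)=S(q)\otimes\mathbb{C}$), at least one of $\xi_{1},\xi_{2}$ is nonzero, and, after possibly replacing $\zeta$ by $i\zeta$, I take $\xi:=\xi_{1}\ne 0$. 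By construction $\xi\in S(q)\cap Q_{\rho_{0}}(q)$.

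Next I will verify that this $\xi$ belongs to $H_{y,q}Q_{\rho_{0}}(y)$ for every $y\in O\cap W^{s}(q)$, i.e.\ that $H_{q,y}\xi\in Q_{\rho_{0}}(y)$. The key input is Proposition \ref{prop:comm}, which gives $L^{u}_{y}=H_{q,y}\circ L^{u}_{q}\circ H_{y,q}$ as maps $S(y)\to S(y)$. Complexifying, one obtains $\mathbb{L}_{y}=\mathbb{H}_{q,y}\circ\mathbb{L}_{q}\circ\mathbb{H}_{y,q}$ on $S_{\mathbb{C}}(y)$, where $\mathbb{H}_{q,y}$ denotes the $\mathbb{C}$-linear extension of the holonomy $H_{q,y}$. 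Since $S(y)=H_{q,y}S(q)$ one has $\mathbb{H}_{q,y}\zeta\in S_{\mathbb{C}}(y)$, and
\[
\mathbb{L}_{y}\bigl(\mathbb{H}_{q,y}\zeta\bigr)=\mathbb{H}_{q,y}\bigl(\mathbb{L}_{q}\zeta\bigr)=\rho_{0}\,\mathbb{H}_{q,y}\zeta.
\]
Taking real parts, and using that $\mathbb{H}_{q,y}$ is defined componentwise, $H_{q,y}\xi=\mathrm{Re}\bigl(\mathbb{H}_{q,y}\zeta\bigr)\in Q_{\rho_{0}}(y)$. Inverting the holonomy yields $\xi\in H_{y,q}Q_{\rho_{0}}(y)$ for every admissible $y$, hence $\xi\in P_{\rho_{0}}(q)$ by the defining intersection \eqref{eq:P ro}.

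The main subtlety I expect is purely bookkeeping: making sure the complexified holonomy is well-defined and commutes with taking real parts, and verifying that the vector produced in the second paragraph is genuinely nonzero (handled by the $\zeta\mapsto i\zeta$ trick). Once these are in place, Proposition \ref{prop:comm} does all the heavy lifting, and the proof concludes by observing that $P_{\rho_{0}}(q)\supseteq\{0\}\cup\langle\xi\rangle$ has positive dimension.
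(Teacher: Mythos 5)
Your proposal is correct and follows essentially the same route as the paper: the paper's proof is precisely that Proposition \ref{prop:comm} forces the nontrivial subspace $Q_{\rho_{0}}(q)\cap S(q)$ into $P_{\rho_{0}}(q)$, whence the constant rank of the continuous subbundle from Theorem \ref{thm:hamen2-1} is positive. You have simply written out explicitly the holonomy-commutation and real-part bookkeeping that the paper leaves implicit.
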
 \begin{proof} From the previous proposition
it follows that the non-trivial subspace $Q_{\rho_{0}}(q)\cap S(q)$
is contained in $P_{\rho_{0}}(q)$, and hence in the notation of Theorem
\ref{thm:hamen2-1}, the integer $K=\dim\, P_{\rho_{0}}$ is strictly
positive. \end{proof}

We conclude this section with the following construction, again due
to Hamenst\"adt \cite[p686]{Hamenstaedt1995}. If $V$ is a real vector
space and $V_{i}\subseteq V$ are even dimensional subspaces admitting
almost complex structures $\mathbb{J}_{i}$, let $\overline{\bigcap}_{i}(V_{i},\mathbb{J}_{i})$
denote the largest subspace $W\subseteq\bigcap_{i}V_{i}$ which is
invariant under the $\mathbb{J}_{i}$ and such that $\mathbb{J}_{i}|W=\mathbb{J}_{j}|W$
for all $i,j$. We call $\overline{\bigcap}_{i}(V_{i},\mathbb{J}_{i})$
the \emph{complex intersection} of the $(V_{i},\mathbb{J}_{i})$.

Now suppose the $\rho_{0}$ we found above happens to lie in $\mathbb{C}\backslash\mathbb{R}$.
Then if we consider the operator \[
\mathbb{J}:=\frac{1}{\mbox{Im}\,\rho_{0}}\left(L^{u}-(\mbox{Re}\,\rho_{0})\mbox{Id}\right),\]
 it is easy to see that $\mathbb{J}$ defines an almost complex structure
on $Q_{\rho_{0}}$.

If follows that if we define a new bundle $P_{\rho_{0}}^{\mathbb{C}}$
over all of $\Sigma$ by setting \begin{equation}
P_{\rho_{0}}^{\mathbb{C}}(x):=\overline{\bigcap}\left\{ (H_{y,x}Q_{\rho_{0}}(y),\mathbb{J}_{y})\,:\, y\in W^{s}(x)\right\} ,\label{P ro complex}\end{equation}
 then the same proof as above shows that $P_{\rho_{0}}^{\mathbb{C}}$
is a continuous $\phi_{t}$-invariant subundle of $E^{u}$ over $\Sigma$
of positive dimension. We shall use this observation later.

\section{\label{sec:North-South-Dynamics}North-South Dynamics}

In this section we return to the situation described in the introduction.
The main goal of this section is to prove Theorem \ref{thm:north south dynamic}
below; we will use heavily the assumption that $M$ admits a metric
of negative curvature. As stated in the introduction however it should
be possible to prove Theorem \ref{thm:north south dynamic} without
using this assumption.

Throughout this section $(M,g)$ denotes a closed $n$-dimensional
Riemannian manifold with tangent bundle $\pi:TM\rightarrow M$. We
first begin with a quick summary of the geometry of the tangent bundle
that we will need throughout what follows.

\subsection*{The geometry of $TM$}

The \emph{vertical bundle} $V\subseteq TTM$ is given by \[
V(v)=\ker\left\{ d_{v}\pi:T_{v}TM\rightarrow T_{x}M\right\} ,\]
 where for convenience throughout this paragraph an arbitrary vector
$v\in TM$ is assumed to lie in $T_{x}M$. The Riemannian metric $g$
on $M$ determines a direct summand $H$ of the vertical bundle $V$,
called the \emph{horizontal bundle} together with isomorphisms\[
T_{v}TM\cong H(v)\oplus V(v)\cong T_{x}M\oplus T_{x}M.\]
 Let $\nabla$ denote the Levi-Civita connection on $(M,g)$. We make
this isomorphism explicit as follows: given $\xi\in T_{v}TM$ associate
a point $(\xi_{H},\xi_{V})\in T_{x}M\oplus T_{x}M$, where \[
\xi_{H}=d_{v}\pi(\xi)\]
 and \[
\xi_{V}=K_{v}(\xi).\]
 Here $K:TTM\rightarrow TM$ is the connection map of $\nabla$, defined
as follows: given $\xi\in T_{v}TM$, choose a curve $Z:(-\varepsilon,\varepsilon)\rightarrow TM$
such that $Z(0)=v$ and $\dot{Z}(0)=\xi$. Then \[
K_{v}(\xi)=\nabla_{t}Z(0),\]
 where $\nabla_{t}$ denotes the covariant derivative along the curve
$\pi\circ Z$.

$H(v)$ is thus defined to be the set of $\xi\in T_{v}TM$ such that
$\xi_{V}=0$, and similarly $V(v)$ is simply the set of $\xi\in T_{v}TM$
such that $\xi_{H}=0$. Clearly the map $\xi\mapsto\xi_{H}$ defines
an isomorphism $H(v)\rightarrow T_{x}M$ and similarly $\xi\mapsto\xi_{V}$
defines an isomorphism $V(v)\rightarrow T_{x}M$. In general we shall
slightly abuse notation and write $\xi=(\xi_{H},\xi_{V})$ to indicate
this identification.\\

It is easy to see that given $\xi,\eta\in TTM$ we have\[
\omega_{0}(\xi,\eta)=\left\langle \xi_{H},\eta_{V}\right\rangle -\left\langle \xi_{V},\eta_{H}\right\rangle ,\]
 where as before $\omega_{0}$ denotes the canonical symplectic form
on $TM$. We define the \emph{Sasaki metric} $g_{TM}$ on $TM$ by
setting \[
\left\langle \xi,\eta\right\rangle _{TM}:=\left\langle \xi_{H},\eta_{H}\right\rangle +\left\langle \xi_{V},\eta_{V}\right\rangle ,\]
 so that $\omega_{0}$ and $g_{TM}$ are compatible.

\subsection*{North-South dynamics}

As before, let $H:TM\rightarrow M$ denote the energy Hamiltonian
$(x,v)\mapsto\frac{1}{2}\left|v\right|^{2}$, and let $F_{1}$ denote
the symplectic gradient of $H$ with respect to $\omega_{1}$. Let
$\phi_{t}^{1}:TM\rightarrow TM$ denote the flow of $F_{1}$. Let
$\Sigma_{k}=H^{-1}(k)$ denote a closed energy level set, where $k$
is a regular value of $H$, and assume that $\phi_{t}^{1}|\Sigma_{k}$
is Anosov (we do not need to assume $C^{1}$ weak (un)stable bundles
at this point). Write the Anosov splitting of $T\Sigma_{k}$ as \[
T\Sigma_{k}=\mathbb{R}F_{1}\oplus E_{1}^{s}\oplus E_{1}^{u}.\]
 We first quote the following theorem from \cite{PaternainPaternain1994},
which will very useful in what follows. \begin{thm} \label{thm:pat&pat theorem}If
$\Sigma_{k}$ is an Anosov energy level then the weak (un)stable bundles
$E_{1}^{+}$ and $E_{1}^{-}$ are transverse to the vertical subbundle
$V$. \end{thm}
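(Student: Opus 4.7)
The goal is to show $E_1^\pm(v) \cap V(v) = \{0\}$ for every $v\in \Sigma_k$; by time-reversal it suffices to treat $E_1^+$. First I would record the symplectic-geometric setup: $V$ is Lagrangian with respect to $\omega_1$ because the canonical part $\omega_0 = -d\alpha$ vanishes on pairs of vertical vectors (the standard fact for $\omega_0$), and $\pi^*\sigma$ annihilates any vertical vector since $d\pi|_V = 0$. Moreover $F_1$ is nowhere vertical on $\Sigma_k$ because $d_v\pi(F_1(v)) = v \neq 0$. A dimension count then gives that $V \cap T\Sigma_k$ is of dimension $n-1$ while $E_1^+$ is of dimension $n$ in the $(2n-1)$-dimensional $T\Sigma_k$, so transversality is the same as trivial intersection.

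Next I would translate the problem into Jacobi fields via the Sasaki decomposition introduced just above the statement. For $\xi \in T_v TM$ write $d\phi_t^1(\xi) = (J_\xi(t), K_\xi(t))$, where $J_\xi$ satisfies a linear second-order ODE of magnetic-Jacobi type along the base curve $\gamma_v(t) = \pi(\phi_t^1 v)$, involving the curvature tensor of $g$ together with first-order terms built from the Lorentz force $Y$, while $K_\xi$ is recovered from $J_\xi$ by a covariant-derivative formula. Vertical initial data for $\xi$ corresponds exactly to $J_\xi(0) = 0$.

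Now I would argue by contradiction. Suppose $\xi \in E_1^+(v)\cap V(v)$ is nonzero; writing $\xi = a F_1(v) + \xi^s$ with $\xi^s \in E_1^s(v)$ and using that $\xi$ is vertical while $F_1$ is not, the component $\xi^s$ is forced to be nonzero. Passing to the symplectic quotient $T\Sigma_k/\mathbb{R}F_1$ (where $\omega_1$ descends to an honest symplectic form), the images $\bar V$ and $\bar E_1^s$ are both Lagrangian, and the hypothesis produces a nonzero vector in $\bar V \cap \bar E_1^s$. I would then show that $d\bar\phi_t(\bar V)$ converges in the Lagrangian Grassmannian to $\bar E_1^u(\phi_t v)$ as $t \to +\infty$, using a Riccati-type analysis of the magnetic Jacobi equation along the orbit of $v$. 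On the other hand, invariance forces the nonzero vector $d\bar\phi_t(\xi^s)$ to stay inside $\bar E_1^s(\phi_t v)$ for every $t$, and any accumulation point of the unit vectors $d\bar\phi_t(\xi^s)/|d\bar\phi_t(\xi^s)|$ then lies in $\bar E_1^s \cap \bar E_1^u$, contradicting the Anosov splitting.

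The principal technical obstacle is the Grassmannian convergence $d\bar\phi_t(\bar V) \to \bar E_1^u$ in the \emph{magnetic} (as opposed to purely geodesic) setting. The standard Riccati argument for geodesic flows must be adapted to handle the first-order terms coming from the Lorentz force $Y$; concretely, one represents Lagrangian complements to $\bar V$ along the orbit as graphs of symmetric endomorphisms, derives a matrix Riccati equation for them, and uses Anosov hyperbolicity to show that distinguished forward/backward asymptotic solutions exist and coincide with $\bar E_1^u$ and $\bar E_1^s$ respectively. This is precisely the content of the Paternain--Paternain argument being cited, and it is here that the absence of conjugate points on an Anosov energy level of a magnetic flow has to be established en route.
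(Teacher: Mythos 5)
There is no internal proof to compare against: the paper does not prove this statement, it quotes it verbatim from the reference \cite{PaternainPaternain1994}. So the only question is whether your sketch could stand on its own, and it cannot. Your set-up is fine and standard: $V$ is Lagrangian for $\omega_{1}$ (both $\omega_{0}$ and $\pi^{*}\sigma$ vanish on pairs of vertical vectors), $F_{1}$ is nowhere vertical on $\Sigma_{k}$ since $d\pi(F_{1}(v))=v$, the dimension count reduces transversality to trivial intersection, and the passage to the quotient $T\Sigma_{k}/\mathbb{R}F_{1}$, where $\bar{V}$ and $\bar{E}_{1}^{s}$ are Lagrangian, correctly converts a putative nonzero vector of $E_{1}^{+}\cap V$ into a nonzero vector of $\bar{V}\cap\bar{E}_{1}^{s}$.

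The genuine gap is the step you yourself flag as the ``principal technical obstacle'': the claim that $d\bar{\phi}_{t}(\bar{V})$ converges to $\bar{E}_{1}^{u}$ along the orbit. This is not a technicality to be delegated; it is essentially the whole theorem. To run the Riccati/graph argument you must already know that $d\phi_{t}(V(v))$ stays transverse to the vertical for $t\neq 0$ (absence of conjugate points on the Anosov energy level), and identifying the limit of $d\phi_{t}(V)$ with $E_{1}^{u}$ is the statement that the Green bundles exist and coincide with the Anosov bundles --- which presupposes, or is at least of the same depth as, the transversality $E_{1}^{\pm}\pitchfork V$ you are trying to establish (indeed $E_{1}^{u}$ transverse to $V$ is exactly what lets $E_{1}^{u}$ be realized as a bounded Riccati solution in the first place). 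You then close the loop by saying this ``is precisely the content of the Paternain--Paternain argument being cited,'' which makes the proposal a reduction of the quoted theorem to the quoted reference rather than a proof. A self-contained argument must obtain the no-conjugate-points statement and the contradiction from the Anosov hypothesis directly (in the cited work this is done by a direct argument with invariant Lagrangian subbundles and the twist property of the convex Hamiltonian, not by first constructing Green bundles), and in the magnetic setting one must in addition control the first-order Lorentz-force terms in the Jacobi/Riccati equation; none of this is carried out in your sketch. The final contradiction step (a unit vector of $\bar{E}_{1}^{s}$ forced arbitrarily close to $\bar{E}_{1}^{u}$, impossible since the angle between the two bundles is bounded below on the compact $\Sigma_{k}$) is fine once the convergence is granted, but as written the heart of the proof is missing.
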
 Now let $\widetilde{M}$ denote the universal covering
of $M$, and let $\widetilde{\Sigma}_{k}$ denote the pullback of
$\Sigma_{k}$ to $T\widetilde{M}$. Then $\widetilde{\Sigma}_{k}$
is a smooth connected hypersurface of $T\widetilde{M}$ that intersects
each tangent space $T_{x}\widetilde{M}$ in a sphere containing the
origin in its interior (since the same is true of $\Sigma_{k}$).
Let $\widetilde{\sigma}$ denote the pullback of $\sigma$ to $\widetilde{M}$
and let $\widetilde{\omega}_{0}$ denote the natural symplectic form
on $T\widetilde{M}$. Let $\widetilde{\omega}_{1}:=\widetilde{\omega}_{0}+\widetilde{\pi}^{*}\widetilde{\sigma}$,
where $\widetilde{\pi}:T\widetilde{M}\rightarrow\widetilde{M}$ is
the footpoint map. We will let $\widetilde{F}_{1}$ denote the symplectic
gradient of the lifted Hamiltonian $(x,v)\mapsto\frac{1}{2}\left|v\right|^{2}$
with respect to $\widetilde{\omega}_{1}$. We will write $\widetilde{\phi}_{t}^{1}:T\widetilde{M}\rightarrow T\widetilde{M}$
for the flow of $\widetilde{F}_{1}$. By assumption $\widetilde{\phi}_{t}^{1}|\widetilde{\Sigma}_{k}$
is Anosov, and we will write the Anosov splitting as \[
T\widetilde{\Sigma}_{k}=\mathbb{R}\widetilde{F}_{1}\oplus\widetilde{E}_{1}^{s}\oplus\widetilde{E}_{1}^{u}.\]
 Similarly let us denote by $\widetilde{V}$ and $\widetilde{H}$
denote the vertical and horizontal subbundles of $TT\widetilde{M}$.\\

As before let $\mathcal{W}^{s},\mathcal{W}^{u},\mathcal{W}^{+}$ and
$\mathcal{W}^{-}$ denote the four foliations of $\Sigma_{k}$ defined
by the subbundles $E^{s},E^{u},E^{+}$ and $E^{-}$ respectively.
We can lift these to foliations $\widetilde{\mathcal{W}}^{s},\widetilde{\mathcal{W}}^{u},\widetilde{\mathcal{W}}^{+}$
and $\widetilde{\mathcal{W}}^{-}$ of $\widetilde{\Sigma}_{k}$. Let
$\mathcal{L}^{+}=\widetilde{\Sigma}_{k}/\widetilde{\mathcal{W}}^{+}$
and $\mathcal{L}^{-}=\widetilde{\Sigma}_{k}/\mathcal{\widetilde{\mathcal{W}}}^{-}$
denote the spaces of weak stable and unstable leaves respectively.
The fundamental group $\pi_{1}(M)$ (regarded as covering transformations
of $\widetilde{M}$) acts on $\widetilde{\Sigma}_{k}$ freely and
properly discontinuously by permuting the orbits of $\widetilde{\phi}_{t}^{1}$.
Since elements of $\pi_{1}(M)$ act by isometries, the action on $\widetilde{\Sigma}_{k}$
must send weak (un)stable leaves to weak (un)stable leaves, and thus
$\pi_{1}(M)$ induces an action on $\mathcal{L}^{+}$ and $\mathcal{L}^{-}$.\\

The aim of this section is to prove the following result. \begin{thm}
\label{thm:north south dynamic}Suppose $M$ admits a metric of negative
curvature. The fundamental group $\pi_{1}(M)$ acts on both $\mathcal{L}^{+}$
and $\mathcal{L}^{-}$ as a `North-South dynamic'. By this we mean
the following: for all $\varphi\in\pi_{1}(M)$, there exists two fixed
leaves $\widetilde{W}_{1}^{+},\widetilde{W}_{2}^{+}\in\mathcal{L}^{+}$
and two fixed leaves $\widetilde{W}_{1}^{-},\widetilde{W}_{2}^{-}\in\mathcal{L}^{-}$
such that for all $\widetilde{W}^{\pm}\in\mathcal{L}^{\pm}$ it holds
that \[
\lim_{n\rightarrow\infty}\varphi^{n}[\widetilde{W}^{\pm}]=\widetilde{W}_{1}^{\pm},\ \ \ \lim_{n\rightarrow\infty}\varphi^{-n}[\widetilde{W}^{\pm}]=\widetilde{W}_{2}^{\pm}.\]

\end{thm}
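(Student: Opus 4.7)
The strategy, as foreshadowed in the introduction, is to leverage the background metric of negative curvature $g_{0}$ on $M$ to reduce the problem to a classical fact about the boundary-at-infinity action of the fundamental group. My plan is to construct a $\pi_{1}(M)$-equivariant homeomorphism between the leaf spaces $\mathcal{L}^{\pm}$ of the lifted magnetic flow and the corresponding leaf spaces for the geodesic flow of $g_{0}$ on the universal cover, and then invoke the well-known North--South dynamics of the $\pi_{1}(M)$-action on the boundary sphere $\partial\widetilde{M}$.

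\emph{Step 1: Orbit equivalence with a negatively curved geodesic flow.} I would first produce a $\pi_{1}(M)$-equivariant homeomorphism $h:\widetilde{\Sigma}_{k}\to\widetilde{SM}$ (where $\widetilde{SM}$ is the unit tangent bundle of $(\widetilde{M},g_{0})$) which conjugates $\widetilde{\phi}_{t}^{1}$, up to reparametrization, to the geodesic flow $\widetilde{\psi}_{t}$ of $g_{0}$, and which sends weak (un)stable leaves of $\widetilde{\phi}_{t}^{1}$ to those of $\widetilde{\psi}_{t}$. Theorem \ref{thm:pat&pat theorem} is the key structural input: the transversality of $\widetilde{E}_{1}^{\pm}$ to the vertical bundle allows one to show that each weak (un)stable leaf in $\widetilde{\Sigma}_{k}$ projects locally diffeomorphically to $\widetilde{M}$, so that orbits of $\widetilde{\phi}_{t}^{1}$ project to curves in $\widetilde{M}$ which are quasigeodesics in $(\widetilde{M},g_{0})$. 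The Morse--Mostow lemma in negatively curved spaces then furnishes, for each such orbit, a unique $g_{0}$-geodesic which it fellow-travels, and this assignment yields the desired orbit equivalence in an equivariant manner.

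\emph{Step 2: Descent to the leaf spaces and identification with $\partial\widetilde{M}$.} Because $h$ is $\pi_{1}(M)$-equivariant and sends weak (un)stable leaves to weak (un)stable leaves, it descends to equivariant homeomorphisms $\overline{h}^{\pm}:\mathcal{L}^{\pm}\to\widetilde{SM}/\widetilde{\mathcal{W}}_{\mathrm{geo}}^{\pm}$. For the geodesic flow of a metric of negative curvature it is classical (Eberlein, Heintze--Im~Hof) that each weak stable leaf on $\widetilde{SM}$ consists of vectors tangent to geodesics sharing a common forward endpoint in $\partial\widetilde{M}$, yielding a $\pi_{1}(M)$-equivariant identification $\widetilde{SM}/\widetilde{\mathcal{W}}_{\mathrm{geo}}^{+}\cong\partial\widetilde{M}$ (and likewise for the unstable foliation via the reversed endpoint). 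On $\partial\widetilde{M}$, every nontrivial element $\varphi\in\pi_{1}(M)$ acts as a hyperbolic isometry with exactly two fixed points, an attractor $\xi_{\varphi}^{+}$ and a repeller $\xi_{\varphi}^{-}$, with $\varphi^{n}\xi\to\xi_{\varphi}^{+}$ for every $\xi\neq\xi_{\varphi}^{-}$ and $\varphi^{-n}\xi\to\xi_{\varphi}^{-}$ for every $\xi\neq\xi_{\varphi}^{+}$. Transporting this dynamical picture back through $\overline{h}^{\pm}$ gives the desired fixed leaves $\widetilde{W}_{1}^{\pm},\widetilde{W}_{2}^{\pm}$ and the stated asymptotic behavior.

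\emph{Main obstacle.} The core technical difficulty is Step 1: constructing the orbit equivalence $h$. The magnetic flow and the $g_{0}$-geodesic flow live on different sphere subbundles of $T\widetilde{M}$ and are \emph{a priori} dynamically unrelated, so there is no way to use structural stability directly. What makes the construction work is the conjunction of the Anosov hypothesis, the transversality statement of Theorem \ref{thm:pat&pat theorem}, and the fact that $(\widetilde{M},g_{0})$ is Gromov hyperbolic; the quasigeodesic/fellow-traveller argument must be set up carefully so that the resulting assignment is simultaneously well-defined on all of $\widetilde{\Sigma}_{k}$, continuous, bijective, and $\pi_{1}(M)$-equivariant, and carries the weak stable (resp. unstable) foliation of $\widetilde{\phi}_{t}^{1}$ onto that of $\widetilde{\psi}_{t}$. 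Once this is in hand, Steps 2 and 3 are essentially bookkeeping modulo the classical theory of boundaries of negatively curved manifolds.
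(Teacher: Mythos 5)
Your proposal follows essentially the same route as the paper: project orbits of $\widetilde{\phi}_{t}^{1}$ to quasi-geodesics of the background negatively curved metric $g_{0}$, use the Morse-type identification of quasi-geodesics with genuine geodesics (Theorem \ref{thm:Knei}) to build a $\pi_{1}(M)$-equivariant orbit equivalence with the $g_{0}$-geodesic flow (Theorem \ref{thm:gromov}, proved via the endpoint/orthogonal-projection map $G_{0}$ together with an averaging trick to force injectivity), and then transfer the classical North--South dynamics of $\pi_{1}(M)$ on $\widetilde{M}_{g_{0}}(\infty)$ back to $\mathcal{L}^{\pm}$. The one inaccuracy is in your Step 1: the quasi-geodesic property of projected orbits is not a consequence of the transversality statement of Theorem \ref{thm:pat&pat theorem}; in the paper it is a separate input, the theorem of Peyerimhoff and Siburg (Theorem \ref{thm:Pey and Sib}), which rests on the Anosov hypothesis placing the energy level above Ma\~n\'e's critical value, whereas Theorem \ref{thm:pat&pat theorem} is used (via Ehresmann's theorem) to show the weak leaves are graphs over $\widetilde{M}$ and the leaf spaces are topological spheres.
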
 Consider the fibration \[
\pi|\Sigma_{k}:\Sigma_{k}\rightarrow M\]
 of $\left(n-1\right)$-spheres. As a direct consequence of Theorem
\ref{thm:pat&pat theorem}, we see that the foliations $\mathcal{W}^{+}$
and $\mathcal{W}^{-}$ are transverse to the fibres of the fibration
$\Sigma_{k}\rightarrow M$. This implies that $\widetilde{\mathcal{W}}^{+}$
and $\widetilde{\mathcal{W}}^{-}$ are transverse to the fibration
$\widetilde{\pi}|\widetilde{\Sigma}_{k}:\widetilde{\Sigma}_{k}\rightarrow\widetilde{M}$.

Given $x\in\widetilde{M}$, the fibre $(\widetilde{\pi}|\widetilde{\Sigma}_{k})^{-1}(x)$$ $
is compact. Thus we can apply the following theorem of Ehresmann.
\begin{thm} Let $F\rightarrow E\overset{p}{\rightarrow}B$ be a fibre
bundle and $\mathcal{F}$ a foliation of $E$ transverse to the fibres.
Suppose $F$ is compact. Then for every leaf $L$ of $\mathcal{F}$,
$p|L:L\rightarrow B$ is a covering map. \end{thm}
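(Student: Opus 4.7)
The plan is to verify the three defining properties of a covering map in turn: local diffeomorphism, path lifting, and even covering by suitable neighborhoods. First I would check that $p|L$ is a local diffeomorphism. Since $\mathcal{F}$ is transverse to the fibres, at every $e \in L$ the tangent space $T_eL$ intersects $T_e F_{p(e)} = \ker d_ep$ trivially; together with the dimension identity $\dim L + \dim F = \dim E$ this forces $d(p|L)_e : T_eL \to T_{p(e)}B$ to be an isomorphism.

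The heart of the argument is a uniform path-lifting property. Given a smooth path $\gamma : [0,1] \to B$ and a point $e_0 \in L$ with $p(e_0) = \gamma(0)$, consider $W := p^{-1}(\gamma([0,1]))$, which by local triviality of $p$ is diffeomorphic to $[0,1] \times F$. Transversality shows that $T\mathcal{F} \cap TW$ is a $1$-dimensional subbundle of $TW$ projecting isomorphically onto the tangent line of $\gamma$, so one obtains a vector field $X$ on $W$ tangent to $\mathcal{F}$ with $dp(X) = \dot\gamma$. Because $F$ is compact, $W$ is compact, hence $X$ is complete; the integral curve of $X$ starting at $e_0$ produces a lift $\tilde\gamma : [0,1] \to E$ of $\gamma$, which lies inside $L$ because $X$ is tangent to $\mathcal{F}$ and $L$ is a leaf.

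Finally I would exhibit evenly covered neighborhoods. Fix $b_0 \in B$ and choose a contractible neighborhood $U$ over which the bundle is trivial. For each $e \in p^{-1}(b_0) \cap L$, use path-lifting of radial paths in $U$ to construct a sheet $S_e \subset L \cap p^{-1}(U)$. Since $U$ is simply connected, the lifts are independent of the chosen path, so $p|S_e$ is a bijection onto $U$; combined with the local-diffeomorphism property this makes it a homeomorphism. Sheets attached to distinct points of $p^{-1}(b_0) \cap L$ are disjoint (any coincidence would contradict uniqueness of path lifts), and every point of $L \cap p^{-1}(U)$ belongs to some $S_e$ by lifting a path back to $b_0$. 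Hence $p^{-1}(U) \cap L = \bigsqcup_e S_e$, each $S_e$ mapped homeomorphically onto $U$.

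The one substantive obstacle is the completeness of the lifting vector field $X$: this is precisely where the compactness of $F$ enters, ruling out the possibility that an integral curve escapes to infinity along the fibre direction before reaching $t=1$. Without this finiteness, the path lift might fail to exist and $p|L$ could very well drop surjectivity, let alone covering-map status.
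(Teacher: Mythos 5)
The paper does not prove this statement at all: it is Ehresmann's theorem, quoted from \cite{CamachoNeto1985}, so your argument should be measured against the classical chart-based proof given there rather than against anything in the text. Your route is the standard ``Ehresmann connection'' proof and is essentially correct: transversality plus complementary dimension gives the local diffeomorphism, lifting $\dot\gamma$ to a vector field tangent to $\mathcal{F}$ gives path lifting, and compactness of $F$ is exactly what prevents the lift from escaping before $t=1$. Three points deserve tightening. First, $p^{-1}(\gamma([0,1]))$ is diffeomorphic to $[0,1]\times F$ only when $\gamma$ is an embedding; the correct object is the pullback bundle $\gamma^{*}E\cong[0,1]\times F$ (equivalently, lift $\dot\gamma$ to a time-dependent vector field on the fibres over $\gamma$), after which your completeness argument goes through verbatim. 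Second, ``lifts over a simply connected $U$ are path-independent'' is itself a homotopy-lifting assertion; either prove it by running the same ODE argument with parameters and continuous dependence on initial data, or sidestep it as you half do by lifting only radial paths and invoking uniqueness of lifts, which holds because $p|L$ is a local homeomorphism into the Hausdorff leaf (the agreement set of two lifts is open and closed). Third, note the implicit hypothesis that ``transverse'' means of complementary dimension $\dim\mathcal{F}=\dim B$, without which the statement is false; this is satisfied in the paper's application, where $\mathcal{W}^{+}$ has dimension $n$ and the fibres are $(n-1)$-spheres. The main difference from the quoted proof is regularity: Camacho--Lins Neto lift paths by covering the compact fibre with finitely many foliation boxes (a Lebesgue-number argument), which works for foliations that are merely continuous with smooth leaves -- precisely the situation in Section 4 of the paper, where no $C^{1}$ assumption on the weak bundles is in force -- whereas your vector-field construction needs the tangent distribution of $\mathcal{F}$ to be at least continuous and some extra care (as above) to get uniqueness of the lifted curves.
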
 For a proof, see
\cite[p91]{CamachoNeto1985}. Now fix $v\in\widetilde{\Sigma}_{k}$.
Since $\widetilde{M}$ is simply connected, $\widetilde{\pi}|\widetilde{W}^{+}(v)$$ $
is a diffeomorphism, and thus $\widetilde{W}^{+}(v)$ simply connected.
Thus $\widetilde{W}^{+}(v)$ intersects each fibre of the fibration
$\widetilde{\Sigma}_{k}\rightarrow\widetilde{M}$ in precisely one
point, and thus each leaf $\widetilde{W}^{+}(v)$ is diffeomorphic
to $\widetilde{M}$. Thus $\widetilde{M}$ is diffeomorphic to $\mathbb{R}^{n}$,
and the space of stable leaves $\mathcal{L}^{+}$ can be identified
topologically with the $\left(n-1\right)$-sphere. Of course the same
applies to $\mathcal{L}^{-}$.\\

Now we recall the concept of the ideal boundary of $\widetilde{M}$.
For this, let $g_{0}'$ denote a metric of negative curvature on $M$
(whose existence we assume in this section), and lift $g_{0}'$ to
a metric $g_{0}$ on $\widetilde{M}$ of negative curvature. \begin{defn}
The \emph{ideal boundary} \textbf{\emph{$\widetilde{M}_{g_{0}}(\infty)$}}
of $(\widetilde{M},g_{0})$ is given by $\widetilde{M}_{g_{0}}(\infty):=\Lambda_{g_{0}}(\widetilde{M})/\sim$,
where $\Lambda_{g_{0}}(\widetilde{M})$ denotes the set of $g_{0}$-geodesics%
\footnote{For clarity we will use the letter $c$ to stand for $g_{0}$-geodesics
and $\gamma$ for the projection to $\widetilde{M}$ of flow lines
of $\widetilde{\phi}_{t}^{1}$.%
} $c:\mathbb{R}\rightarrow\widetilde{M}$ of $\widetilde{M}$, and
$c_{1}\sim c_{2}$ if and only if \[
\mbox{dist}_{\textrm{HD}}(c_{1}[\mathbb{R}^{+}],c_{2}[\mathbb{R}^{+}])<\infty;\]
 here $\mbox{dist}_{\textrm{HD}}$ denotes the \emph{Hausdorff distance}
defined by \[
\mbox{dist}_{\textrm{HD}}(U,V):=\inf\left\{ r\in\mathbb{R}\,:\, U\subseteq B(V,r),V\subseteq B(U,r)\right\} ,\]
 where $U,V\subseteq\widetilde{M}$ and $B(U,r):=\left\{ x\in\widetilde{M}\,:\,\mbox{dist}_{g_{0}}(x,U)\leq r\right\} $.
\end{defn} Given $x\in\widetilde{M}$ and $v\in T_{x}\widetilde{M}$,
let $c_{v}:\mathbb{R}\rightarrow\widetilde{M}$ denotes the unique
$g_{0}$-geodesic adapted to $v$, and let $c_{v}(\infty)\in\widetilde{M}_{g_{0}}(\infty)$
denote the corresponding element of $\widetilde{M}_{g_{0}}(\infty)$.
If $c_{v}^{-1}$ is the geodesic obtained by going along $c_{v}$
backwards, let $c_{v}(-\infty)$ denote the element of $\widetilde{M}_{g_{0}}(\infty)$
corresponding to $c_{v}^{-1}$.

Let $S^{g_{0}}\widetilde{M}$ denote the unit sphere bundle of $(\widetilde{M},g_{0})$.
Fix a point $x\in\widetilde{M}$, and consider the map $s_{x}:S_{x}^{g_{0}}\widetilde{M}\rightarrow\widetilde{M}_{g_{0}}^{g_{0}}(\infty)$
sending $v\mapsto c_{v}(\infty)$. Then $s_{x}$ is a bijection, and
we define a topology on $\widetilde{M}_{g_{0}}(\infty)$ so that that
$s_{x}$ becomes a homeomorphism; thus $\widetilde{M}_{g_{0}}(\infty)\cong S^{n-1}$.
This topology is independent of the choice of $x$, since $s_{y}\circ s_{x}^{-1}:S_{x}^{g_{0}}\widetilde{M}\rightarrow S_{y}^{g_{0}}\widetilde{M}$
is a homeomorphism.\\

The next thing we require is the concept of a quasi-geodesic. \begin{defn}
A curve $\gamma:[a,b]\rightarrow\widetilde{M}$ is an \emph{quasi-geodesic}
of $(\widetilde{M},g_{0})$ if there exist $P,Q\in\mathbb{R}^{+}$
such that \[
\frac{1}{P}\left|s-t\right|-Q\leq\mbox{dist}_{g_{0}}(\gamma(s),\gamma(t))\leq P\left|s-t\right|+Q\]
 for all $s,t\in[a,b]$. If we need to be explicit about the constants
$P,Q$, we call such a quasi-geodesic a \emph{$(P,Q)$-quasi-geodesic}.
\end{defn} We now quote two theorems which explain why this is relevant
to the situation in hand. The first is due to Peyerimhoff and Siburg
(\cite[Theorem 2.9]{PeyerimhoffSiburg2003}). \begin{thm} \label{thm:Pey and Sib}Suppose
$\phi_{t}^{1}|\Sigma_{k}$ is Anosov. Then there exists a constant
$P_{k}\in\mathbb{R}^{+}$ such that the projection to $\widetilde{M}$
of any orbit of $\widetilde{\phi}_{t}^{1}$ is a $(P_{k},0)$-quasi-geodesic.
\end{thm}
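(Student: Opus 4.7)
}

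The upper bound is essentially automatic: since $H=\tfrac12|v|^2$ is preserved by $\widetilde\phi_t^1$, any orbit $(x(t),v(t))$ on $\widetilde\Sigma_k$ satisfies $|v(t)|_g\equiv\sqrt{2k}$, so its projection $\gamma(t)=\widetilde\pi\circ\widetilde\phi_t^1(v)$ has constant $g$-speed $\sqrt{2k}$. Because $M$ is compact, the lifted metrics $g$ and $g_0$ are bi-Lipschitz equivalent on $\widetilde M$ (the comparison constant is $\pi_1(M)$-equivariant), so $\mathrm{dist}_{g_0}(\gamma(s),\gamma(t))\le P_k|s-t|$ for a suitable $P_k\ge\sqrt{2k}$, with additive constant $Q=0$.

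The substantial content is the lower bound: enlarging $P_k$ if necessary, one needs $\mathrm{dist}_{g_0}(\gamma(s),\gamma(t))\ge \frac{1}{P_k}|s-t|$ uniformly in the orbit. I would argue by contradiction. Suppose there exist lifted orbits $\gamma_n$ and times $s_n<t_n$ with $\tau_n:=t_n-s_n\to\infty$ such that $\mathrm{dist}_{g_0}(\gamma_n(s_n),\gamma_n(t_n))/\tau_n\to 0$. Translating in time, I assume $s_n=0$; translating by a deck transformation $\varphi_n\in\pi_1(M)$, I assume that the initial points $\widetilde v_n:=(\gamma_n(0),\dot\gamma_n(0))\in\widetilde\Sigma_k$ project into a fixed compact fundamental domain. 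Passing to a subsequence, $\widetilde v_n\to\widetilde v_\infty$ and the deck translates $\varphi_n':=\varphi_n^{-1}\cdot(\text{deck class of }\gamma_n(t_n))\in\pi_1(M)$ have bounded $g_0$-displacement on $\widetilde M$ at the basepoint, so in fact $\{\varphi_n'\}$ is finite (since $\pi_1(M)$ acts freely and properly discontinuously).

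The conclusion then is that, on the quotient $\Sigma_k$, the orbit $\phi_{\tau_n}^1(v_\infty^{(n)})$ returns to a neighborhood of $v_\infty^{(n)}$ at times $\tau_n\to\infty$ with a fixed deck-translation defect $\varphi_n'\in\{\psi_1,\dots,\psi_N\}$. The heart of the argument is to rule this out using the Anosov hypothesis. The cleanest tool is Anosov's closing lemma: since $\phi_t^1$ is Anosov and $(v_\infty^{(n)},\phi_{\tau_n}^1 v_\infty^{(n)})$ is an arbitrarily close return in the base, one produces a periodic orbit of period near $\tau_n$ in a neighborhood; lifting to $\widetilde\Sigma_k$ and tracking the deck class, this periodic orbit corresponds to a nontrivial $\psi\in\pi_1(M)$ whose displacement along the lifted periodic orbit is bounded, independently of $\tau_n$. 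By the Morse lemma for $(P_k,0)$-quasigeodesics in the negatively curved $(\widetilde M,g_0)$ applied to the lifted periodic orbit of length $\sqrt{2k}\,\tau_n$, the endpoints of a $g_0$-geodesic segment joining $x$ and $\psi\cdot x$ must diverge at least linearly in $\tau_n$ (a basic consequence of negative curvature: the translation length of a hyperbolic isometry is realized by its axis), contradicting boundedness. The main technical point is this last step—obtaining a genuine linear lower bound on $\mathrm{dist}_{g_0}(x,\psi\cdot x)$ as the period $\tau_n\to\infty$—and this is where the background negative curvature on $M$ enters decisively, via the fact that $(\widetilde M,g_0)$ is Gromov hyperbolic and its isometries have positive translation length unless torsion.
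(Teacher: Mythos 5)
Your upper bound is fine (constant speed plus bi-Lipschitz equivalence of $g$ and $g_{0}$ on $\widetilde{M}$), but the lower bound argument has two genuine gaps. First, the normalization step does not work as stated: the failure of the lower bound only gives times with $\mbox{dist}_{g_{0}}(\gamma_{n}(s_{n}),\gamma_{n}(t_{n}))=o(\tau_{n})$, which is \emph{not} bounded, so you cannot conclude that the deck-translation defects $\varphi_{n}'$ have bounded displacement, let alone lie in a finite set; moreover, closeness of the footpoints in $\widetilde{M}$ says nothing about closeness of the velocities, which is what the Anosov closing lemma actually requires in $\Sigma_{k}$. Second, and more seriously, the final step is circular: you apply the Morse lemma for $(P_{k},0)$-quasi-geodesics to the lifted periodic orbit in order to get linear growth of $\mbox{dist}_{g_{0}}(x,\psi\cdot x)$ in $\tau_{n}$, but the quasi-geodesic property of projected orbits is precisely the statement being proved. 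The fact you cite from negative curvature (hyperbolic isometries have positive translation length) only gives a positive lower bound, not one growing linearly in the period; obtaining that linear lower bound for the deck element of a closed orbit is essentially equivalent to the theorem itself restricted to periodic orbits, so nothing has been gained.

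For comparison, the paper does not prove this statement dynamically at all: it quotes Peyerimhoff--Siburg, whose argument is variational. The point is that the Anosov condition forces $k$ to exceed Ma\~n\'e's critical value $c(g,\sigma)$ (by Burns--Paternain and Contreras--Iturriaga--Paternain--Paternain), that for such $k$ every orbit of $\widetilde{\phi}_{t}^{1}$ is a minimizer of the free-time action with parameter $k$, and that for $k>c(g,\sigma)$ the action of a minimizer dominates a fixed positive multiple of its length, which yields the uniform lower bound and hence the $(P_{k},0)$-quasi-geodesic property. If you want a self-contained proof, that minimization route (rather than a closing-lemma argument) is the one to pursue; in particular the escape of orbits to infinity in $\widetilde{M}$ comes from the action inequality, not from periodic-orbit approximation.
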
 \begin{rem} In fact, Theorem \ref{thm:Pey and Sib} is
stated in a somewhat different form in \cite{PeyerimhoffSiburg2003}:
there they assert that when $k$ is greater than a certain critical
value $c(g,\sigma)$ known as \emph{Ma\~n\'e's critical value} then the
projection to $\widetilde{M}$ of any minimizing orbit of $\widetilde{\phi}_{t}^{1}$
is a quasi-geodesic. See for instance \cite{ContrerasIturriagaPaternainPaternain1998,BurnsPaternain2002}
for the definition of $c(g,\sigma)$, where it is proved that when
$\phi_{t}^{1}$ is Anosov, it necessarily holds that $k>c(g,\sigma)$,
and that in this case, every orbit of $\widetilde{\phi}_{t}^{1}$
is minimizing. \end{rem} We can build the \emph{quasi-ideal boundary}
$\widetilde{M}_{g_{0}}^{*}(\infty)$ in much the same way using quasi-geodesics.
If $\gamma:\mathbb{R}\rightarrow\widetilde{M}$ is an $g_{0}$-quasi-geodesic,
we write $\gamma^{*}(\infty)$ to denote the corresponding element
of $\widetilde{M}_{g_{0}}^{*}(\infty)$. Note that any geodesic is
automatically a quasi-geodesic, and thus we have a natural map $\widetilde{M}_{g_{0}}(\infty)\hookrightarrow\widetilde{M}_{g_{0}}^{*}(\infty)$
carrying an equivalence class of geodesics to the corresponding equivalence
class of quasi-geodesics. This is the second theorem we quote here;
a proof may be found in \cite[Theorem 2.2]{Knieper2002}. \begin{thm}
\label{thm:Knei}The inclusion $\widetilde{M}_{g_{0}}(\infty)\hookrightarrow\widetilde{M}_{g_{0}}^{*}(\infty)$
is a bijection. \end{thm}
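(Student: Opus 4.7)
The plan is to invoke the Morse stability lemma for quasi-geodesics in a Hadamard manifold of pinched negative curvature. Since $(\widetilde{M},g_{0})$ is simply connected with sectional curvature bounded above by some $-\kappa^{2}<0$, one has the following standard fact: for every $P,Q>0$ there exists $R=R(P,Q,\kappa)$ such that any $(P,Q)$-quasi-geodesic segment and any $g_{0}$-geodesic segment sharing endpoints lie within Hausdorff distance $R$ of each other. The underlying mechanism is the exponential convexity of the distance function in negative curvature: a long excursion of the quasi-geodesic away from the comparison geodesic would force the quasi-geodesic to have length strictly incompatible with the defining inequality.

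Injectivity of the inclusion is then immediate from the definitions. If two $g_{0}$-geodesics $c_{1},c_{2}$ represent the same equivalence class in $\widetilde{M}_{g_{0}}^{*}(\infty)$, they satisfy $\mbox{dist}_{\textrm{HD}}(c_{1}[\mathbb{R}^{+}],c_{2}[\mathbb{R}^{+}])<\infty$, which is exactly the condition for $c_{1}\sim c_{2}$ as geodesics in $\widetilde{M}_{g_{0}}(\infty)$.

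For surjectivity, let $\gamma:\mathbb{R}\rightarrow\widetilde{M}$ be a $(P,Q)$-quasi-geodesic, set $x_{0}:=\gamma(0)$, and for each $n\in\mathbb{N}$ let $c_{n}:[0,T_{n}]\rightarrow\widetilde{M}$ be the unit-speed $g_{0}$-geodesic from $x_{0}$ to $\gamma(n)$. The quasi-geodesic inequality gives $T_{n}=\mbox{dist}_{g_{0}}(x_{0},\gamma(n))\geq n/P-Q\to\infty$. By Morse stability, the image of $c_{n}$ lies within the $R$-neighborhood of $\gamma[0,n]$ and vice versa. I would then extract a convergent subsequence of the initial vectors $\dot{c}_{n}(0)$ in the compact unit sphere $S_{x_{0}}^{g_{0}}\widetilde{M}$; its limit $v$ generates a geodesic ray $c_{v}$ which, by passage to the limit, remains within Hausdorff distance $R$ of $\gamma[\mathbb{R}^{+}]$. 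Hence $\gamma^{*}(\infty)$ coincides with the image of $c_{v}(\infty)$ under the inclusion. The analogous construction on $(-\infty,0]$ handles the negative ray, giving surjectivity for quasi-geodesics on all of $\mathbb{R}$.

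The main obstacle is the Morse lemma itself in the variable-curvature Hadamard setting. In constant negative curvature this is very classical, but for a pinched Hadamard manifold the argument requires CAT$(-\kappa^{2})$ triangle comparison and the standard techniques of Gromov-hyperbolic geometry; however this is entirely standard, and is precisely the content of the cited reference \cite[Theorem 2.2]{Knieper2002}, so in practice one would just quote it.
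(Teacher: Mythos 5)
Your proposal is correct, but note that the paper itself does not prove Theorem \ref{thm:Knei} at all: it is stated as a quoted result, with the proof delegated entirely to \cite[Theorem 2.2]{Knieper2002}. What you have written is, in substance, the standard argument underlying that citation. Injectivity is indeed purely definitional, since the equivalence relation on the quasi-ideal boundary restricted to genuine geodesics is exactly the relation defining $\widetilde{M}_{g_{0}}(\infty)$. For surjectivity, your use of Morse stability is legitimate here because $M$ is closed, so the lifted metric $g_{0}$ on $\widetilde{M}$ has sectional curvature pinched between two negative constants; in particular $(\widetilde{M},g_{0})$ is a Hadamard manifold satisfying CAT$(-\kappa^{2})$ comparison, hence Gromov hyperbolic, and the constant $R=R(P,Q,\kappa)$ exists. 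The limiting step is also sound, though it deserves one extra sentence: since the initial vectors $\dot{c}_{n}(0)$ subconverge and $T_{n}\rightarrow\infty$, the segments $c_{n}$ converge uniformly on compacta to the ray $c_{v}$, and the Morse bound at each finite stage gives \emph{both} inclusions (each point of $c_{v}$ lies within $R$ of $\gamma[\mathbb{R}^{+}]$, and each fixed $\gamma(s)$ lies within $R$ of $c_{n}$ at a parameter value bounded independently of $n$, hence within $R+1$, say, of $c_{v}$), so the Hausdorff distance of the positive rays is finite, which is all the equivalence class requires. Compared with the paper, your route buys a self-contained sketch at the cost of invoking the Morse lemma for variable pinched negative curvature, which is exactly the nontrivial content of the cited reference; citing it, as you acknowledge at the end, is the intended reading.
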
 We will use this to show the following
key result, whose proof is essentially that of Theorem $2.12$ in
\cite{Knieper2002}. The result however is originally due to Gromov
(see \cite{Gromov2000}), and also independently due to Ghys (\cite[Theorem 4.5]{Ghys1984}).
Let $\psi_{t}$ denote the geodesic flow of $(\widetilde{M},g_{0})$
and let $\psi_{t}'$ denote the geodesic flow of $(M,g_{0}')$. Let
$S^{g_{0}'}M$ denote the unit sphere bundle of $(M,g_{0}')$. 

\begin{thm} \label{thm:gromov}$\phi_{t}^{1}|\Sigma_{k}$ and $\psi'_{t}|S^{g_{0}'}M$
are topologically conjugate.\end{thm}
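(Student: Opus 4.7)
The plan is to build a $\pi_1(M)$-equivariant homeomorphism $h:\widetilde{\Sigma}_k\to S^{g_0}\widetilde{M}$ sending $\widetilde{\phi}_t^1$-orbits to $\widetilde{\psi}_t$-orbits (preserving orientation), and to descend it to the compact quotients. The bridge between the two flows is the ideal boundary: Theorem \ref{thm:Pey and Sib} says that projected magnetic orbits are quasi-geodesics of $(\widetilde M,g_0)$, while Theorem \ref{thm:Knei} gives $\widetilde M_{g_0}(\infty)=\widetilde M_{g_0}^{*}(\infty)$, so each projected magnetic orbit determines a unique pair of points at infinity, and hence a unique $g_0$-geodesic.

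Concretely, given $v\in\widetilde\Sigma_k$ set $\gamma_v(t):=\widetilde\pi\circ\widetilde\phi_t^1(v)$. By Theorem \ref{thm:Pey and Sib} this is a $(P_k,0)$-quasi-geodesic, so it determines endpoints $\gamma_v(\pm\infty)\in\widetilde M_{g_0}^{*}(\infty)$; these two points are distinct, because a quasi-geodesic with coinciding ideal endpoints would be bounded, contradicting that $\widetilde\pi\circ\widetilde\phi_t^1(v)$ is a properly embedded quasi-geodesic ray in both time directions. Applying Theorem \ref{thm:Knei}, let $c_v:\mathbb R\to\widetilde M$ be the unique unit-speed $g_0$-geodesic with $c_v(\pm\infty)=\gamma_v(\pm\infty)$, parameterized so that $c_v(0)$ is the foot of the perpendicular from $\widetilde\pi(v)$ (unique by convexity of $\mathrm{dist}_{g_0}(\widetilde\pi(v),c_v(\cdot))$ in negative curvature). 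Define $h(v):=\dot c_v(0)\in S^{g_0}\widetilde M$.

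Then I would verify, in order: (i) $\pi_1(M)$-equivariance, since covering transformations act by $g_0$-isometries permuting geodesics and commute with $\widetilde\phi_t^1$; (ii) that $h$ maps $\widetilde\phi_t^1$-orbits into $\widetilde\psi_t$-orbits, since the endpoint pair $(\gamma_v(-\infty),\gamma_v(+\infty))$ is constant along the $\widetilde\phi_t^1$-orbit of $v$; (iii) injectivity on orbits by the same endpoint-constancy argument together with the observation that a $\widetilde\phi_t^1$-orbit is determined by its pair of ideal endpoints (two orbits projecting to quasi-geodesics with the same endpoint pair must coincide, because together with transversality to the fibres given by Theorem \ref{thm:pat&pat theorem} each fibre meets a weak-stable leaf exactly once, and the weak-stable leaf is determined by one endpoint); (iv) surjectivity by using (iii) to parameterize both sides by the space of distinct pairs in $\widetilde M_{g_0}(\infty)$; (v) continuity of $h$ and $h^{-1}$, which follows from the Morse-type stability of quasi-geodesics in negative curvature: nearby initial conditions $v$ produce nearby projected quasi-geodesics, hence nearby endpoint pairs in $\widetilde M_{g_0}(\infty)\cong S^{n-1}$, hence nearby geodesics $c_v$. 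Finally, equivariance lets $h$ descend to a homeomorphism $\overline h:\Sigma_k\to S^{g_0'}M$ intertwining the orbit foliations, and a time reparameterization of $\overline h\circ\phi_t^1\circ\overline h^{-1}$ (matching arc-length along $\widetilde\psi_t$-orbits) promotes the orbit equivalence to a topological conjugacy.

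The main obstacle is step (v), continuity: one needs that the assignment $v\mapsto(\gamma_v(-\infty),\gamma_v(+\infty))$ is continuous into $\widetilde M_{g_0}(\infty)\times\widetilde M_{g_0}(\infty)$. This requires a quantitative version of Theorem \ref{thm:Knei}, saying that nearby $(P_k,0)$-quasi-geodesics stay within bounded $g_0$-Hausdorff distance of each other uniformly on compact time intervals, with the bound controlled by the initial separation. In the context of \cite{Knieper2002} this is a standard Morse lemma for quasi-geodesics in CAT$(-\kappa^2)$ spaces; the proof consists in unpacking that stability estimate, combined with the fact that $v\mapsto\gamma_v$ is continuous in the uniform-on-compacts topology.
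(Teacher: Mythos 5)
Your construction of $h$ is exactly the paper's map $G_{0}$: use Theorem \ref{thm:Pey and Sib} and Theorem \ref{thm:Knei} to attach to each $v\in\widetilde{\Sigma}_{k}$ the pair of ideal endpoints of the projected magnetic orbit, take the $g_{0}$-geodesic $c_{v}$ with those endpoints, and send $v$ to the velocity of $c_{v}$ at the foot of the orthogonal projection of $\widetilde{\pi}v$. Up to that point you and the paper agree. The genuine gap is your step (iii): you only argue injectivity \emph{at the level of orbits} (distinct orbits have distinct endpoint pairs, hence go to distinct geodesics), but you never address injectivity of $h$ along a \emph{single} orbit, and this is precisely where the naive map fails. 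Writing $h(\widetilde{\phi}_{t}^{1}v)=\psi_{\rho(t,v)}(h(v))$, the function $t\mapsto\rho(t,v)$ records the motion of the nearest-point projection of the quasi-geodesic $\gamma_{v}$ onto its shadow geodesic $c_{v}$; since $\gamma_{v}$ is only a $(P_{k},0)$-quasi-geodesic it can backtrack at bounded scale, so $\rho(\cdot,v)$ need not be monotone, and two distinct points $v$, $\widetilde{\phi}_{s}^{1}v$ on the same orbit can have the same projection foot, i.e.\ $h(v)=h(\widetilde{\phi}_{s}^{1}v)$. So $h$ is in general not injective, your $\overline{h}$ is not a homeomorphism, and the final time-reparameterization step has nothing to act on.

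The paper repairs exactly this defect by an averaging trick: $\rho$ satisfies the cocycle identity $\rho(t+t',v)=\rho(t,\widetilde{\phi}_{t'}^{1}v)+\rho(t',v)$, one chooses $\tau>0$ with $\rho(\tau,v)>0$ for all $v$ (possible by compactness, since the endpoints are distinct so the projection eventually advances), sets $r(v)=\frac{1}{\tau}\int_{0}^{\tau}\rho(t,v)\,dt$ and replaces $G_{0}$ by $G_{\tau}(v)=\psi_{r(v)}(G_{0}(v))$. Then the induced time change $f(t)=r(\widetilde{\phi}_{t}^{1}v)+\rho(t,v)$ has $f'(t)=\frac{1}{\tau}\rho(\tau,\widetilde{\phi}_{t}^{1}v)>0$, so $G_{\tau}$ is injective along orbits while still being $\pi_{1}(M)$-equivariant and orbit-preserving; it then descends to the desired equivalence on $\Sigma_{k}$. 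Your proposal is missing this idea, and without it (or some substitute guaranteeing monotonicity of the orbit parameterization) the argument does not close. By contrast, the continuity issue you single out as the main obstacle in step (v) is indeed handled by the standard stability (Morse) property of quasi-geodesics in negative curvature and is not where the real work lies; also note that your surjectivity step (iv) is asserted rather than proved, though the paper is equally brief on that point.
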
 \begin{proof} Given $v\in\widetilde{\Sigma}_{k}$,
let $\gamma_{v}:=\widetilde{\pi}\circ\widetilde{\phi}_{t}^{1}v$.
Then $\gamma_{v}$ determines an element $\gamma_{v}^{*}(\infty)\in\widetilde{M}_{g_{0}}^{*}(\infty)$
by Theorem \ref{thm:Pey and Sib}, and thus by Theorem \ref{thm:Knei}
a unique element $\xi_{v}\in\widetilde{M}_{g_{0}}(\infty)$. Let $\xi_{v}^{-1}\in\widetilde{M}_{g_{0}}(\infty)$
denote the element corresponding to $\gamma_{v}^{*}(-\infty)\in\widetilde{M}_{g_{0}}^{*}(\infty)$.

Suppose $\zeta,\xi\in\widetilde{M}_{g_{0}}(\infty)$. Then there exists
a unique $g_{0}$-geodesic $c$ such that $c(\infty)=\zeta$ and $c(-\infty)=\xi$.
Let $\mathbb{P}(c):\widetilde{M}\rightarrow\widetilde{M}$ denote
orthogonal projection onto $c$, and use this to define a map $\mathbb{P}(\zeta,\xi):\widetilde{M}\rightarrow S^{g_{0}}\widetilde{M}$
by \[
\mathbb{P}(\zeta,\xi)(x)=\dot{c}(t)\ \ \ \mbox{where\ \ \ }\mathbb{P}(c)(x)=c(t).\]
 Now define $G_{0}:\widetilde{\Sigma}_{k}\rightarrow S^{g_{0}}\widetilde{M}$
by setting \[
G_{0}(v):=\mathbb{P}(\xi_{v},\xi_{v}^{-1})(\widetilde{\pi}v),\]
 Then $G_{0}$ is continuous and surjective but in general not injective:
there may exist two points $v,v'$ on the same orbit of $\widetilde{\phi}_{t}^{1}$
that have the same orthogonal projection onto the $g_{0}$-geodesic
$c$ determined by $\xi_{v}=\xi_{v'}$ and $\xi_{v}^{-1}=\xi_{v'}^{-1}$.
In order to achieve injectivity we `average' $G_{0}$. For this look
at the map $\rho:\mathbb{R}\times\widetilde{\Sigma}_{k}\rightarrow S^{g_{0}}\widetilde{M}$
defined by \[
G_{0}(\widetilde{\phi}_{t}^{1}v)=\psi_{\rho(t,v)}(G_{0}(v)).\]
 Then $\rho$ satisfies the \emph{cocycle property}, that is, \[
\rho(t+t',v)=\rho(t,\widetilde{\phi}_{t'}^{1}v)+\rho(t',v),\]
 as is easily checked. Now choose $\tau\in\mathbb{R}^{+}$ such that
$\rho(\tau,v)>0$ for all $v\in\widetilde{\Sigma}_{k}$, and then
let $r(v)$ denote the average\[
r(v):=\frac{1}{\tau}\int_{0}^{\tau}\rho(t,v)dt.\]
 Next define $G_{\tau}:\widetilde{\Sigma}_{k}\rightarrow S^{g_{0}}\widetilde{M}$
by \[
G_{\tau}(v)=\psi_{r(v)}(G_{0}(v)).\]
 We claim that $G_{\tau}$ is injective. For this observe that if
\[
f(t):=r(\widetilde{\phi}_{t}^{1}v)+\rho(t,v)\]
 then $f$ is monotone increasing. Indeed, \begin{eqnarray*}
f'(t) & = & \frac{1}{\tau}\int_{0}^{\tau}\rho'(u+t,v)du\\
 & = & \frac{1}{\tau}(\rho(\tau+t,v)-\rho(t,v))\\
 & = & \frac{1}{\tau}\rho(\tau,\widetilde{\phi}_{t}^{1}v)>0.\end{eqnarray*}
 The claim then follows from the computation\begin{eqnarray*}
G_{\tau}(\widetilde{\phi}_{t}^{1}v) & = & \psi_{r(\psi_{t}v)}(G_{0}(\widetilde{\phi}_{t}^{1}v))\\
 & = & \psi_{r(\psi_{t}v)+\rho(t,v)}(G_{0}(v))\\
 & = & \psi_{f(t)}(G_{0}(v)).\end{eqnarray*}
 Finally, in order to deduce the stronger statement that $\phi_{t}^{1}|\Sigma_{k}$
and $\psi'_{t}|S^{g_{0}'}M$ are also topologically conjugate, one
simply notes that $G_{0}$ is obviously equivariant under the action
of $\pi_{1}(M)$, and hence so is $G_{\tau}$; thus $G_{\tau}$ descends
to $M$ to define an orbit equivalence $G'_{\tau}$ from $\phi_{t}^{1}|\Sigma_{k}$
and $\psi'_{t}|S^{g_{0}'}M$. \end{proof} It is now easy to prove
Theorem \ref{thm:north south dynamic}. Indeed, it is well known (see
for instance \cite[Theorem 3.8.13]{Klingenberg1995}) that Theorem
\ref{thm:north south dynamic} holds in the case of a geodesic flow
of a negatively curved manifold. Thus if $M$ admits a metric $g_{0}'$
of negative curvature, Theorem \ref{thm:gromov} gives us an orbit
equivalence between $\phi_{t}^{1}|\Sigma_{k}$ and the geodesic flow
$\psi_{t}'$ of $(M,g_{0}')$, and via this orbit equivalence we see
Theorem \ref{thm:north south dynamic} holds in our case too.

\section{\label{sec:Proof-of-Theorem A}Proof of Theorem A}

We will now prove Theorem A. Our proof of the theorem will depend
on the parity of $n$; moreover, a separate argument will be required
to deal with the cases $n=3$ and $n=7$. We will start with the case
where $n$ is even.

\subsection*{The case when $n$ is an even integer.}

In the even dimensional case, recall that we assume $\phi_{t}^{1}|\Sigma_{k}$
is $1/2$-pinched. Since $(\Sigma_{k},\omega_{1})$ is stable, there
exists an invariant subbundle $P_{\rho_{0}}$ of $E^{u}$ as constructed
in Section \ref{sec:Constructing-the-Invariant}; see \eqref{eq:P ro 0}.
The maximal integral submanifolds of $P_{\rho_{0}}$ define a foliation
$\mathcal{P}(v)$ of class $C^{1}$ on $W^{u}(v)$. Since $P$ is
invariant under parallel transport, it is also invariant under holonomy
transport and thus the foliations $\mathcal{P}(v)$ glue together
to give a foliation $\mathcal{P}$ of class $C^{1}$ on $\Sigma_{k}$
that is invariant under the parallel transport of $\nabla$ and thus
also the holonomy maps. Thus $\mathcal{P}$ can be lifted to $\widetilde{\Sigma}_{k}$
and then projected to a foliation $\mathcal{P}'$ of $\mathcal{L}^{+}$
of positive dimension.

Since $\mathcal{P}$ is invariant under $\phi_{t}$, $\mathcal{P}'$
is invariant under the action of $\pi_{1}(M)$. Here Theorem \ref{thm:north south dynamic}
of the previous section comes into play: $\pi_{1}(M)$ acts on $\mathcal{L}^{+}$
as a North-South dynamics. A theorem of Foulon \cite{Foulon1994}
states that there are no non-trivial $C^{0}$ foliations of the sphere
$S^{n-1}$ which are invariant under North-South dynamics. Since we
know that $\mathcal{P}'$ is of positive dimension, we must have $\mathcal{P}'=\mathcal{L}^{+}$,
and hence the subbundle $P_{\rho_{0}}$ is equal to $E^{u}$. From
this it is easy to deduce that $d\lambda=\rho_{0}\omega_{1}$, where
$\rho_{0}\in\mathbb{R}$ defines $P_{\rho_{0}}$; see \eqref{eq:P ro 0}.

To complete the proof in this case it remains to rule out the possibility
that $\rho_{0}=0$. Suppose for contradiction that this is the case.
Then the $1$-form $\lambda$ is closed. Let $\mu$ denote a Borel
probability measure on $\Sigma_{k}$. Recall that $\mu$ determines
a \emph{$1$-current} $l_{\mu}$ by \[
l_{\mu}\left(\beta\right)=\int_{\Sigma_{k}}\beta\left(F_{1}\right)d\mu\,:\,\beta\in\Omega^{1}(\Sigma_{k}).\]
 We say that $\mu$ is \emph{exact as a current} if $l_{\mu}\left(\beta\right)=0$
whenever $\beta$ is closed. Now let $\mu_{L}$ denote the Liouville
measure of $\Sigma_{k}$ (defined precisely below). Then by Lemma
\ref{lem:exact as a current} below, $\mu_{L}$ is exact as a current.
But \[
l_{\mu_{L}}\left(\lambda\right)=\int_{\Sigma_{k}}\lambda\left(F_{1}\right)d\mu_{L}\ne0;\]
 contradiction. Thus $\rho_{0}\ne0$; this completes the proof in
the even dimensional case.\\

For completeness let us give a precise definition of the Liouville
measure $\mu_{L}$ and prove that it is indeed exact as a current.
Whilst in general there may exist many invariant volume forms on an
energy level $\Sigma_{k}$, and thus many invariant probability measures,
in the special case where the energy level $\Sigma_{k}$ is Anosov,
the Liouville measure is the unique smooth invariant probability measure.
It can be defined as follows. Let $X\in TTM|\Sigma_{k}$ denote a
vector field such that $\omega_{1}(X,F_{1})=1$ (such a vector field
always exists since $\Sigma_{k}$ is a regular energy level). Observe
that \[
i_{F_{1}}i_{X}\omega_{1}^{n}|\Sigma_{k}=\omega_{1}^{n-1},\]
 and hence if $\Theta:=i_{X}\omega_{1}^{n}$ then $\Theta$ is a volume
form on $\Sigma_{k}$. Now observe that $\omega_{1}^{n-1}$ is exact
for $n\geq3$. Indeed \[
\omega_{1}^{n-1}=(\omega_{0}+\pi^{*}\sigma)^{n-1}=(\omega_{0})^{n-1}+(n-1)\pi^{*}\sigma\wedge(\omega_{0})^{n-2}.\]
 On the right-hand side the first term is exact. For $n\geq3$ the
second term is exact as well and the claim follows (when $n=2$, $\omega_{1}$
is exact if $M$ is not the 2-torus, but we do not need this here).
Then $i_{F_{1}}\Theta=d\tau$ and $\phi_{t}^{1}$ preserves the volume
form $\Theta$. Let $\mu_{L}$ denote the smooth invariant probability
measure induced by $\Theta$; $\mu_{L}$ is called the Liouville measure.
\begin{lem} \label{lem:exact as a current}The Liouville measure
$\mu_{L}$ of $\Sigma_{k}$ is exact as a current. \end{lem}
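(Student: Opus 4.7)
The plan is to use the algebraic identity relating $\beta(F_1)\Theta$ to a wedge product, together with the exactness of $i_{F_1}\Theta$ that was already established in the setup, and then apply Stokes' theorem on the closed manifold $\Sigma_k$.

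First I would reduce the claim to showing $\int_{\Sigma_k}\beta(F_1)\,\Theta = 0$ for every closed $1$-form $\beta$ on $\Sigma_k$, since $\mu_L$ is the probability measure induced (up to a positive normalizing constant) by the invariant volume form $\Theta = i_X\omega_1^n|_{\Sigma_k}$.

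The key observation is the pointwise identity
\[
\beta(F_1)\,\Theta \;=\; \beta \wedge i_{F_1}\Theta,
\]
valid for any $1$-form $\beta$ and any top-degree form $\Theta$ on the $(2n-1)$-dimensional manifold $\Sigma_k$. This follows by contracting $F_1$ into the identically vanishing $(2n)$-form $\beta\wedge\Theta$: we get $0 = i_{F_1}(\beta\wedge\Theta) = \beta(F_1)\Theta - \beta\wedge i_{F_1}\Theta$.

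Now I invoke the discussion preceding the lemma: because $n\geq 3$, the form $\omega_1^{n-1}$ is exact on $TM$, hence $i_{F_1}\Theta = \omega_1^{n-1}|_{\Sigma_k} = d\tau$ for some smooth $(2n-3)$-form $\tau$ on $\Sigma_k$. Therefore, using $d\beta = 0$,
\[
\beta \wedge i_{F_1}\Theta \;=\; \beta\wedge d\tau \;=\; -d(\beta\wedge\tau) + d\beta\wedge\tau \;=\; -d(\beta\wedge\tau).
\]
Since $\Sigma_k$ is closed, Stokes' theorem gives
\[
\int_{\Sigma_k}\beta(F_1)\,\Theta \;=\; \int_{\Sigma_k}\beta\wedge i_{F_1}\Theta \;=\; -\int_{\Sigma_k} d(\beta\wedge\tau) \;=\; 0,
\]
and the lemma follows after dividing by the normalizing constant.

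There is no real obstacle here: the only substantive input is the exactness of $\omega_1^{n-1}|_{\Sigma_k}$, which is guaranteed by the hypothesis $n\geq 3$ and has already been arranged in the construction of $\Theta$ above the lemma. The rest is the standard integration-by-parts argument showing that the asymptotic cycle (in the sense of Schwartzman) of a flow preserving an exact transverse form vanishes.
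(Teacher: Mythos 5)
Your proof is correct and is essentially the same as the paper's: both rewrite $\beta(F_{1})\Theta$ as $\beta\wedge i_{F_{1}}\Theta$, use the exactness $i_{F_{1}}\Theta=d\tau$ arranged before the lemma, and conclude by Stokes' theorem on the closed manifold $\Sigma_{k}$. The only difference is that you spell out the pointwise identity via $i_{F_{1}}(\beta\wedge\Theta)=0$, which the paper leaves implicit.
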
 \begin{proof}
Let $\beta\in\Omega^{1}(\Sigma_{k},\mathbb{R})$ denote any closed
$1$-form and let $A$ be the integral of $\Theta$. Then\begin{eqnarray*}
A\, l_{\mu_{L}}(\beta) & = & A\,\int_{\Sigma_{k}}\beta(F_{1})d\mu_{L}\\
 & = & \int_{\Sigma_{k}}\beta(F_{1})\Theta\\
 & = & \int_{\Sigma_{k}}i_{F_{1}}\Theta\wedge\beta\\
 & = & \int_{\Sigma_{k}}d\tau\wedge\beta\\
 & = & \int_{\Sigma_{k}}d(\tau\wedge\beta),\end{eqnarray*}
 and this last integral is zero by Stokes' theorem. \end{proof}

\subsection*{The case when $n$ is an odd integer.}

We now proceed to the second case, where $n$ is odd. We no longer
need to assume that $\phi_{t}^{1}|\Sigma_{k}$ is $1/2$-pinched,
only that the weak (un)stable bundles are of class $C^{1}$. The next
result is from \cite[Lemma 2]{Feres1991}. \begin{lem} \label{lem:useful lemma from Feres}Let
$K\subseteq\widetilde{E}_{1}^{u}$ be a continuous distribution of
$k$-dimensional planes defined everywhere on $\widetilde{\Sigma}_{k}$.
Then $K$ projects onto a continuous field $Q$ of $k$-planes on
$S^{n-1}$.\end{lem}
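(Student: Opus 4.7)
My plan is to use a tangent fibre of the sphere bundle $\widetilde{\pi}|\widetilde{\Sigma}_{k}:\widetilde{\Sigma}_{k}\to\widetilde{M}$ as a global transversal to the weak stable foliation $\widetilde{\mathcal{W}}^{+}$. Fix $x_{0}\in\widetilde{M}$ and set $S:=(\widetilde{\pi}|\widetilde{\Sigma}_{k})^{-1}(x_{0})$, a smoothly embedded $(n-1)$-sphere in $\widetilde{\Sigma}_{k}$. From the discussion earlier in this section, each leaf of $\widetilde{\mathcal{W}}^{+}$ is diffeomorphic to $\widetilde{M}$ under $\widetilde{\pi}$ and meets every such fibre in exactly one point. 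Hence the restriction of the quotient map $p:\widetilde{\Sigma}_{k}\to\mathcal{L}^{+}$ to $S$ is a continuous bijection from the compact sphere $S$ onto $\mathcal{L}^{+}$, and since $\mathcal{L}^{+}$ is Hausdorff this is a homeomorphism identifying $\mathcal{L}^{+}$ with $S^{n-1}$. This identification will be the target for the projected field $Q$.

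The next step is to construct, at each $v\in S$, a canonical linear isomorphism $\Pi_{v}:\widetilde{E}_{1}^{u}(v)\to T_{v}S$. The subspace $T_{v}S$ coincides with $\widetilde{V}(v)\cap T_{v}\widetilde{\Sigma}_{k}$. By Theorem \ref{thm:pat&pat theorem}, the weak bundle $\widetilde{E}_{1}^{+}$ is transverse to the vertical bundle $\widetilde{V}$, so $\widetilde{E}_{1}^{+}(v)\cap T_{v}S=\{0\}$; a dimension count ($n+(n-1)=2n-1$) then yields
\[
T_{v}\widetilde{\Sigma}_{k}=\widetilde{E}_{1}^{+}(v)\oplus T_{v}S.
\]
The Anosov splitting likewise provides $T_{v}\widetilde{\Sigma}_{k}=\widetilde{E}_{1}^{+}(v)\oplus\widetilde{E}_{1}^{u}(v)$, so projection along $\widetilde{E}_{1}^{+}(v)$ defines the required isomorphism $\Pi_{v}$. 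I then set $Q(v):=\Pi_{v}(K(v))$.

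Since $K(v)$ is a $k$-plane in $\widetilde{E}_{1}^{u}(v)$ and $\Pi_{v}$ is a linear isomorphism, $Q(v)$ is a $k$-plane in $T_{v}S$. Continuity of the bundles $\widetilde{E}_{1}^{+}$, $\widetilde{E}_{1}^{u}$ and $\widetilde{V}$, together with continuity of $K$, yields continuity of $v\mapsto\Pi_{v}$ and hence of $Q$, giving a continuous field of $k$-planes on $S\cong S^{n-1}$ as required. The chief subtlety lies in establishing the transversality $\widetilde{E}_{1}^{+}(v)\cap T_{v}S=\{0\}$, without which the projection $\Pi_{v}$ is not even defined; this is exactly the content of Theorem \ref{thm:pat&pat theorem} quoted from \cite{PaternainPaternain1994}, and is the only non-trivial external input the argument requires.
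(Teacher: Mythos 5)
Your proof is correct, and it leans on the same single external input as the paper -- the Paternain--Paternain transversality theorem (Theorem \ref{thm:pat&pat theorem}) -- but the construction is genuinely different. The paper takes a closed hypersurface $C\subset\widetilde{M}$ diffeomorphic to $S^{n-1}$ with inward normal field $\nu$, and transfers $K$ to $TC$ by applying $d_{\nu(x)}\widetilde{\pi}$ to $K(\nu(x))\subseteq\widetilde{E}_{1}^{u}(\nu(x))$, using that $d_{v}\widetilde{\pi}$ is injective on $\widetilde{E}_{1}^{u}(v)$ (again because the unstable bundle meets the vertical trivially) and that its image can be identified with $v^{\perp}=T_{x}C$. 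You instead stay inside a single fibre $S=\widetilde{\Sigma}_{k}\cap T_{x_{0}}\widetilde{M}$ and use the splitting $T_{v}\widetilde{\Sigma}_{k}=\widetilde{E}_{1}^{+}(v)\oplus T_{v}S$, coming from $\widetilde{E}_{1}^{+}\cap\widetilde{V}=\{0\}$ and a dimension count, to project $K(v)$ into $T_{v}S$ along the weak stable bundle. Your route has the mild advantage that the projected plane lands in $T_{v}S$ by construction, so you never have to discuss where $d\widetilde{\pi}\bigl[\widetilde{E}_{1}^{u}(v)\bigr]$ sits inside $T_{x}\widetilde{M}$ (the identification with $v^{\perp}$ is immediate for geodesic flows, the setting of Feres' original lemma, but deserves a word for magnetic flows); the paper's route has the advantage of not needing the full direct-sum decomposition, only injectivity of $d\widetilde{\pi}$ on $\widetilde{E}_{1}^{u}$. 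Two small remarks: your opening paragraph identifying $S$ with $\mathcal{L}^{+}$ is superfluous -- the fibre $S$ is already an $(n-1)$-sphere and your field $Q$ is built directly inside $TS$, so the lemma follows without ever mentioning the leaf space -- and the Hausdorffness of $\mathcal{L}^{+}$ you invoke there is asserted rather than proved (it is true, and the paper states the corresponding homeomorphism later, but you do not need it here).
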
 \begin{proof} Let $C\subseteq\widetilde{M}$
denote a closed submanifold of $\widetilde{M}$ diffeomorphic to $S^{n-1}$.
For each $x\in C$, let $\nu(x)\in T_{x}\widetilde{M}$ denote the
inward pointing normal to $C$ at $x$. Thus $K(\nu(x))\subseteq\widetilde{E}_{1}^{u}(\nu(x))$.
Then note that given $x\in C$ and $v\in\widetilde{E}_{1}^{s}(x)$,
the map $d_{v}\widetilde{\pi}|\widetilde{E}_{1}^{u}:\widetilde{E}_{1}^{u}(v)\rightarrow v^{\perp}\subseteq T_{x}\widetilde{M}$
is a linear isomorphism. The desired continuous field $Q$ of $k$-planes
is then given by\[
Q(x):=d_{\nu(x)}\widetilde{\pi}\left[K(\nu(x))\right]\subseteq T_{x}C.\]

\end{proof} For the case where $M$ is odd dimensional we will use
the bundle $P_{\rho_{0}}$ defined by \eqref{P ro complex}, where
this time $\rho_{0}\in\mathbb{C}$ is given by Corollary \ref{thm:hamen2}.
In this case however the argument is initially simpler. Indeed, by
the following topological result, after lifting $P_{\rho_{0}}$ to
a continuous distribution on $\widetilde{E}_{1}^{u}$, we see immediately
that $P_{\rho_{0}}=E_{1}^{u}$. \begin{thm} For $n$ odd, $S^{n-1}$
admits no $k$-plane distribution for $1\leq k\leq n-2$. \end{thm}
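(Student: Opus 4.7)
The plan is to prove this as a standard consequence of the Euler class obstruction. Assume for contradiction that $E \subseteq TS^{n-1}$ is a continuous $k$-plane subbundle with $1 \leq k \leq n-2$. Pick any Riemannian metric on $S^{n-1}$ and form the orthogonal complement $E^{\perp}$, so that $TS^{n-1} = E \oplus E^{\perp}$ as a direct sum of continuous real vector bundles of ranks $k$ and $n-1-k$ respectively.

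First I would verify that both summands are orientable. Since $n \geq 3$ (the only case of interest; note that for $n=1$ or $n=3$ the range $1\leq k\leq n-2$ is empty or forces $k=1$ and is handled by the hairy ball theorem), the sphere $S^{n-1}$ is simply connected, so $H^{1}(S^{n-1};\mathbb{Z}/2) = 0$. The first Stiefel--Whitney class $w_{1}(E)$ lies in this group, hence $w_{1}(E) = 0$ and $E$ is orientable; similarly for $E^{\perp}$. Fix compatible orientations so that the orientation on $E \oplus E^{\perp}$ agrees with the standard orientation of $TS^{n-1}$.

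Next I would invoke the Whitney product formula for the Euler class of oriented real vector bundles, which gives
\[
e(TS^{n-1}) \;=\; e(E) \smile e(E^{\perp}) \;\in\; H^{n-1}(S^{n-1};\mathbb{Z}).
\]
The factors live in $H^{k}(S^{n-1};\mathbb{Z})$ and $H^{n-1-k}(S^{n-1};\mathbb{Z})$ respectively. Because $1 \leq k \leq n-2$, both $k$ and $n-1-k$ satisfy $0 < k,\, n-1-k < n-1$, and the cohomology of the sphere vanishes in these intermediate degrees. Hence $e(E) = 0$ and $e(E^{\perp}) = 0$, forcing $e(TS^{n-1}) = 0$.

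This contradicts the fact that, for $n$ odd, $n-1$ is even and therefore $e(TS^{n-1})$ evaluated on the fundamental class equals $\chi(S^{n-1}) = 2 \neq 0$, completing the proof. There is no genuine obstacle here: the argument is a routine application of characteristic class theory, with the only mild subtlety being the orientability of the putative subbundle, which is dispatched by simple connectedness of $S^{n-1}$ for $n \geq 3$.
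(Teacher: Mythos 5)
Your argument is correct: the paper does not prove this statement itself but simply cites \cite[Theorem 27.18]{Steenrod1951}, and your Euler class computation (orientability of the summands from simple connectedness, the Whitney product formula, vanishing of cohomology in intermediate degrees, and $\chi(S^{n-1})=2$ for $n$ odd) is precisely the standard modern form of that obstruction-theoretic proof. The only cosmetic remark is that your special-casing of $n=3$ is unnecessary, since $S^{2}$ is simply connected and the same Euler class argument applies verbatim there.
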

See for instance \cite[Theorem 27.18]{Steenrod1951} for a proof.
\begin{rem} We could alternatively deduce the same result by observing
as before that the space of leaves $\mathcal{L}^{+}:=\widetilde{\Sigma}_{k}/\widetilde{\mathcal{W}}^{+}$
is topologically an $\left(n-1\right)$-sphere, and since $P_{\rho_{0}}$
is invariant under the holonomy transport, $P_{\rho_{0}}$ determines
a continuous distribution of $k$-planes on $S^{n-1}$. The proof
given above does \emph{not} use the fact that $P_{\rho_{0}}$ is invariant
under holonomy transport. \end{rem} Unfortunately this does not quite
nail the result as in the even dimensional case. If $\rho_{0}\in\mathbb{R}$,
we deduce $L^{u}=\rho_{0}\mbox{Id}$, and the desired contradiction
follows just as in the even-dimensional case.

If however $\rho_{0}\notin\mathbb{R}$ then more work is required.
Consider the $\phi_{t}^{1}$-invariant almost complex structure $\mathbb{J}$
on $E^{u}$ of class $C^{1}$ defined by\begin{equation}
\mathbb{J}:=\frac{1}{\mbox{Im}\,\rho_{0}}\left(L^{u}-(\mbox{Re}\,\rho_{0})\mbox{Id}\right).\label{eq:acs}\end{equation}
 Lifting $\mathbb{J}$ to an almost complex structure on $\widetilde{E}_{1}^{u}$,
and then using the construction from Lemma \ref{lem:useful lemma from Feres},
we see that $\mathbb{J}$ induces an almost complex structure on $S^{n-1}$.
This immediately implies that $n=3$ or $n=7$, since the only spheres
that admit almost complex structures are $S^{2}$ and $S^{6}$.

It thus remains to eliminate the cases $n=3$ and $n=7$, and we will
tackle these separately.

\subsection*{The case where $n=3$ or $n=7$.}

The first step in the proof of these two special cases is to show
that the existence of a $\phi_{t}^{1}$-invariant almost complex structure
$\mathbb{J}$ on $E^{u}$ forces both $E^{s}$ and $E^{u}$ to be
of class $C^{\infty}$. The next two results are due to Hamenst\"adt;
see Corollary $2.11$ and Corollary $2.12$ of \cite{Hamenstaedt1995}.

\begin{lem} The almost complex structure $\mathbb{J}$ on $E^{u}$
is parallel with respect to the Kanai connection.\end{lem}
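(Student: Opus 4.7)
The plan is to verify $\nabla_Z\mathbb{J}=0$ separately in each of the three directions of the Anosov splitting $T\Sigma_k=\mathbb{R}F_1\oplus E_1^s\oplus E_1^u$. For $Z=F_1$, the $\phi_t^1$-invariance of $\mathbb{J}$ (inherited from $L^u$) together with property~(1) of Theorem~\ref{thm:(the-Kanai-connection)}, namely $\nabla_{F_1}=L_{F_1}$, immediately give $\nabla_{F_1}\mathbb{J}=L_{F_1}\mathbb{J}=0$.

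For $Z\in E_1^s$ I would exploit the identity $P^{\mathbb{C}}_{\rho_0}=E_1^u$ that was just established. By the very definition of the complex intersection in~\eqref{P ro complex}, this equality says that for every $y\in W^s(x)$ the holonomy $H_{x,y}\colon E_1^u(x)\to E_1^u(y)$ intertwines $\mathbb{J}_x$ with $\mathbb{J}_y$; by $\phi_t^1$-invariance of $\mathbb{J}$ the same holds along every curve contained in a single weak stable leaf $W^+$. Property~(4) of the Kanai connection identifies this holonomy with $\nabla$-parallel transport of sections of $E_1^u$ along such a curve, and therefore $\mathbb{J}$ is $\nabla$-parallel in every direction tangent to $W^+$. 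In particular $\nabla_Z\mathbb{J}=0$ for all $Z\in\mathbb{R}F_1\oplus E_1^s$.

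The main obstacle is the remaining case $Z\in E_1^u$; my plan here is to run the construction of Section~\ref{sec:Constructing-the-Invariant} symmetrically on $E_1^s$. The crucial input is the $\omega$-duality between $L^u$ and $L^s$: from $d\lambda(X,Y)=\omega(LX,Y)=\omega(X,LY)$ one reads off $\omega(L^sX,Y)=\omega(X,L^uY)$ for $X\in E_1^s$, $Y\in E_1^u$, so $L^s$ is the $\omega$-adjoint of $L^u$ under the non-degenerate Lagrangian pairing $\omega\colon E_1^s\otimes E_1^u\to\mathbb{R}$ (that $E_1^s$ and $E_1^u$ are Lagrangian follows from $\phi_t^1$-invariance of $\omega$ together with the Anosov contraction/expansion estimates). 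In particular $L^s$ shares complex eigenvalues with $L^u$, so the same $\rho_0$ works; repeating Section~\ref{sec:Constructing-the-Invariant} verbatim with $u\leftrightarrow s$ and $\mathcal{W}^+\leftrightarrow\mathcal{W}^-$ produces a continuous $\phi_t^1$-invariant subbundle $P^{s,\mathbb{C}}_{\rho_0}\subseteq E_1^s$ invariant under holonomy along $\mathcal{W}^-$. The same topological arguments (no nontrivial $k$-plane distribution on $S^{n-1}$ for $n$ odd, and the classification $S^2,S^6$ of spheres admitting almost complex structures) that forced $P^{\mathbb{C}}_{\rho_0}=E_1^u$ now give $P^{s,\mathbb{C}}_{\rho_0}=E_1^s$, so $\mathbb{J}^s:=\tfrac{1}{\mbox{Im}\,\rho_0}\bigl(L^s-(\mbox{Re}\,\rho_0)\mbox{Id}\bigr)$ is a globally defined a.c.s.\ on $E_1^s$, and the argument of the previous paragraph applied symmetrically yields $\nabla_Z\mathbb{J}^s=0$ for every $Z\in\mathbb{R}F_1\oplus E_1^u$.

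To close the loop I would differentiate the duality identity $\omega(\mathbb{J}^sX,Y)=\omega(X,\mathbb{J}Y)$ in a direction $Z\in E_1^u$. Using $\nabla\omega=0$, $\nabla_Z\mathbb{J}^s=0$, and the fact that $\nabla$ preserves the Anosov splitting (so $\nabla_ZX\in E_1^s$ and $\nabla_ZY\in E_1^u$), all extraneous terms cancel by the same duality, leaving $\omega\bigl(X,(\nabla_Z\mathbb{J})Y\bigr)=0$ for every $X\in E_1^s$, $Y\in E_1^u$. Since $(\nabla_Z\mathbb{J})Y\in E_1^u$ and $\omega|_{E_1^s\otimes E_1^u}$ is non-degenerate, $(\nabla_Z\mathbb{J})Y=0$, which completes the verification.
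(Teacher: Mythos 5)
Your proposal is correct and follows essentially the same route as the paper: the flow direction via $\nabla_{F_{1}}=L_{F_{1}}$, the stable directions via holonomy invariance of $\mathbb{J}$ (forced by $P_{\rho_{0}}^{\mathbb{C}}=E_{1}^{u}$) together with the identification of holonomy with $\nabla$-parallel transport, and the unstable directions by running the construction symmetrically on $E_{1}^{s}$ to get the $\omega$-dual structure $\mathbb{J}^{s}$ (the paper's $\mathbb{J}'=\widetilde{\mathbb{J}}$) and then differentiating the duality identity using $\nabla\omega=0$ and non-degeneracy of the pairing $E_{1}^{s}\times E_{1}^{u}\rightarrow\mathbb{R}$. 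Your justification that the same $\rho_{0}$ works (via $L^{s}$ being the $\omega$-adjoint of $L^{u}$) is exactly what underlies the paper's remark that $\widetilde{\mathbb{J}}=\mathbb{J}'$ ``straight from the definition''.
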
 \begin{proof}
Since $\mathbb{J}$ is $\phi_{t}^{1}$-invariant and $\nabla_{F_{1}}=L_{F_{1}}$,
we certainly have $\nabla_{F_{1}}\mathbb{J}=0$ and thus it suffices
to show that $\nabla_{X_{s}}\mathbb{J}=0$ for all $X_{s}\in\Gamma(E^{s})$
and $\nabla_{X_{u}}\mathbb{J}=0$ for all $X_{u}\in\Gamma(E^{u})$.
We know that $\mathbb{J}$ is invariant under the holonomy maps $H_{x,y}$
(since otherwise the subbundle $P_{\rho_{0}}^{\mathbb{C}}$ from \eqref{P ro complex}
would be of positive dimension - this contradict the fact that an
even dimensional sphere does not have non-trivial subbundles) and
thus as holonomy transport is the same as parallel transport for $\nabla$,
we see that $\nabla_{X_{s}}\mathbb{J}=0$ for all $X_{s}\in\Gamma(E^{s})$.
We can define a new almost complex structure $\mathbb{J}'$ on $E^{s}$
by the equation \[
\omega(X_{u},\mathbb{J}'X_{s})=\omega(\mathbb{J}X_{u},X_{s}).\]
 Then if $Y_{s}\in\Gamma(E^{s})$, applying $Y_{s}$ to the previous
equation and using that $\nabla\omega=0$ shows that \begin{equation}
\omega(X_{u},(\nabla_{Y_{s}}\mathbb{J}')X_{s})=\omega((\nabla_{Y_{s}}\mathbb{J})X_{u},X_{s}),\label{eq:j}\end{equation}
 and thus $\nabla_{Y_{s}}\mathbb{J}'=0$.

But now the point is the following: we could repeat all of what we
have done above but working with $L^{s}$ not $L^{u}$, and thus obtain
an almost complex structure $\widetilde{\mathbb{J}}$ on $E^{s}$.
The same argument would then show that this almost complex structure
$\widetilde{\mathbb{J}}$ is parallel on $E^{u}$, that is, $\nabla_{X_{u}}\widetilde{\mathbb{J}}=0$
for $X_{u}\in\Gamma(E^{u})$. But straight from the definition, it
is clear that $\widetilde{\mathbb{J}}=\mathbb{J}'$, and thus $\mathbb{J}'$
is parallel on $E^{u}$ as well. But then taking $Y_{u}\in\Gamma(E^{u})$
and plugging it into \eqref{eq:j}, we see that $\nabla_{Y_{u}}\mathbb{J}=0$,
and this completes the proof.\end{proof}

\begin{thm}\label{strong}Suppose the strong unstable bundle $E_{1}^{u}$
admits an almost complex structure $\mathbb{J}$ defined as in \eqref{eq:acs}.
Then $\Sigma_{k}$ admits a real analytic structure, for which the
strong (un)stable bundles $E_{1}^{s}$ and $E_{1}^{u}$ are both real
analytic. Moreover, this real analytic structure is $C^{1}$ diffeomorphic
to the underlying smooth structure of $\Sigma_{k}$.\end{thm}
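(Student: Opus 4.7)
The plan is to upgrade the regularity of $\Sigma_k$ by exploiting the fact, established in the previous lemma, that the almost complex structure $\mathbb{J}$ on $E_1^u$ is parallel with respect to the Kanai connection $\nabla$, and likewise for its companion $\mathbb{J}'$ on $E_1^s$ defined via $\omega(X_u,\mathbb{J}'X_s)=\omega(\mathbb{J}X_u,X_s)$. Thus $\nabla$ preserves a great deal of tensorial structure, and the strategy is to exploit this to reduce the holonomy of $\nabla$ to a rigid, finite-dimensional group of complex affine transformations. This is the philosophy of Kanai \cite{Kanai1993}, Feres--Katok \cite{FeresKatok1990}, and Hamenst\"adt \cite{Hamenstaedt1995}, and it is what ultimately forces an analytic atlas to appear.

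First, I would analyse the leafwise geometry. By Theorem \ref{thm:(the-Kanai-connection)}(3) the restriction of $\nabla$ to each strong unstable leaf $W^u(x)$ is flat and torsion-free, and since $\mathbb{J}$ is $\nabla$-parallel and squares to $-\mathrm{Id}$, it equips $W^u(x)$ with a complex affine structure: the $\nabla$-exponential map at $x$ gives a developing map $E_1^u(x)\cong\mathbb{C}^{(n-1)/2}\to\widetilde{W^u(x)}$ whose transition functions are complex affine. The same analysis applied using $\mathbb{J}'$ on leaves of $\mathcal{W}^s$ furnishes each strong stable leaf with a complex affine structure. In particular the strong foliations are real analytic leaf by leaf.

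Second, I would patch these leafwise structures transversally. By Theorem \ref{thm:(the-Kanai-connection)}(4), holonomy transport along the weak (un)stable foliations coincides with $\nabla$-parallel transport. Since $\mathbb{J}$ and $\mathbb{J}'$ are $\nabla$-parallel, these holonomies act by complex affine maps in the developing charts above. Combining the flow direction (smooth) with the stable and unstable developing charts (complex affine, hence real analytic) produces local charts on $\Sigma_k$ whose transition functions lie in a rigid Lie group of transformations; these transition functions are therefore real analytic. This atlas makes $\Sigma_k$ into a real analytic manifold with respect to which $E_1^s$ and $E_1^u$ are real analytic subbundles. Because every ingredient of the construction (the $C^1$ weak bundles, the Kanai connection restricted to leaves, the flow, and $\mathbb{J}$) is of class at least $C^1$, the identity map from $(\Sigma_k,C^\infty)$ to $(\Sigma_k,C^\omega)$ is a $C^1$ diffeomorphism.

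The main obstacle I anticipate is the transverse gluing step: the Kanai connection is only $C^0$ globally, and one cannot use it naively to define transverse analytic coordinates. The rigidity must come from the fact that the holonomy maps between neighbouring strong leaves, expressed in the leafwise developing charts, are forced to be complex affine by the $\nabla$-parallelism of $\mathbb{J}$. Verifying this carefully, and then checking that the resulting atlas is Hausdorff and globally compatible, is the technical heart of the argument and requires the detailed bookkeeping carried out in \cite[Sect.~2]{Hamenstaedt1995}.
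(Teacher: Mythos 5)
Your first step is essentially the paper's: on each strong leaf the Kanai connection is smooth, flat and torsion-free, and since $\mathbb{J}$ (resp.\ $\mathbb{J}'$) is parallel one gets a complex structure on each leaf of $\mathcal{W}^{u}$ (resp.\ $\mathcal{W}^{s}$); the paper phrases this as vanishing of the Nijenhuis tensor for parallel frames rather than via developing maps, but that difference is harmless.

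The genuine gap is the second step, which you yourself flag as ``the technical heart'' and then do not carry out. Leafwise analyticity plus the fact that holonomy between nearby strong leaves is affine in the leafwise charts does \emph{not} yield an atlas on $\Sigma_{k}$ with analytic transition functions: the transverse dependence of your charts (how the leafwise developing maps vary from leaf to leaf, and the flow-time component of any transition map) is a priori only $C^{1}$/H\"older, and there is no local mechanism forcing it into a ``rigid Lie group''. The paper's proof resolves exactly this point by a global argument that your proposal never invokes: it passes to the universal cover, puts a complex structure on the leaf spaces $\mathcal{L}^{+}$ and $\mathcal{L}^{-}$ by taking as charts the inverses of $p_{+}|\widetilde{W}^{u}(v):\widetilde{W}^{u}(v)\rightarrow\mathcal{L}^{+}\backslash F(\widetilde{W}^{+}(v))$ (transition maps are then precisely the holonomy maps, holomorphic because $\mathbb{J}$ is holonomy invariant), and identifies the orbit space $\widetilde{\Sigma}_{k}/\widetilde{\phi}_{t}^{1}$ with $\mathcal{L}^{+}\times\mathcal{L}^{-}\backslash K$, all equivariantly under $\pi_{1}(M)$; the real analytic structure on $\Sigma_{k}$ and the analyticity of $E_{1}^{s},E_{1}^{u}$ come from this product complex structure. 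This step requires the flip map $F$ and the fact that each $p_{\pm}|\widetilde{W}^{u}(v)$ misses exactly one leaf, which in the paper is obtained from the conjugacy $G_{\tau}$ with the geodesic flow of a background negatively curved metric (Theorems \ref{thm:gromov} and \ref{thm:north south dynamic}). Your proposal makes no use of this standing hypothesis, and without it (or a substitute establishing the leaf-space product structure) the passage from leafwise complex structures to a global analytic structure on $\Sigma_{k}$, $C^{1}$-compatible with the smooth one, is missing rather than merely deferred.
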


Before starting the proof, let us recall the following facts about
the spaces of leaves. Recall that the space $\mathcal{L}^{+}$ and
$\mathcal{L}^{-}$ of leaves are defined to be the quotient spaces
$\mathcal{L}^{+}:=\widetilde{\Sigma}_{k}/\sim_{+}$ and $\mathcal{L}^{-}:=\widetilde{\Sigma}_{k}/\sim_{-}$,
where $\sim_{\pm}$ are the equivalence relations on $\widetilde{\Sigma}_{k}$
defined by $v\sim_{\pm}w$ if and only if $w\in\widetilde{W}^{\pm}(v)$
respectively. We let $p_{\pm}:\widetilde{\Sigma}_{k}\rightarrow\mathcal{L}^{\pm}$
denote the quotient maps, and give $\mathcal{L}^{\pm}$ the quotient
topology induced by $p_{\pm}$. For each $x\in\widetilde{M}$ the
restriction $p_{+}|\widetilde{\Sigma}_{k}\cap T_{x}\widetilde{M}$
and $p_{-}|\widetilde{\Sigma}_{k}\cap T_{x}\widetilde{M}$ are homeomorphisms
onto $\mathcal{L}^{+}$ and $\mathcal{L}^{-}$ respectively; thus
$\mathcal{L}^{+}$ and $\mathcal{L}^{-}$ both admit the structure
of topological manifolds, and are homeomorphic to $S^{n-1}$.

The key fact that we will need is that there exists a homeomorphism
$F:\mathcal{L}^{+}\rightarrow\mathcal{L}^{-}$ with the following
property: given any leaves $\widetilde{W}^{+}\in\mathcal{L}^{+}$
and $\widetilde{W}^{-}\in\mathcal{L}^{-}$, the intersection $\widetilde{W}^{+}\cap\widetilde{W}^{-}$
contains a unique flow line of $\widetilde{\phi}_{t}^{1}$ unless
$\widetilde{W}^{-}=F(\widetilde{W}^{+})$, in which case $\widetilde{W}^{+}\cap\widetilde{W}^{-}=\emptyset$.
We will call $F$ a \emph{flip map}.

To prove this we take advantage of the fact that $\widetilde{M}$
admits a metric $g_{0}$ of negative curvature again. As before let
$S^{g_{0}}\widetilde{M}$ denote the unit sphere bundle of $(\widetilde{M},g_{0})$
and $\psi_{t}:S^{g_{0}}\widetilde{M}\rightarrow S^{g_{0}}\widetilde{M}$
the geodesic flow of $(\widetilde{M},g_{0})$. Let $\mathcal{L}_{g_{0}}^{+}$
and $\mathcal{L}_{g_{0}}^{-}$ denote the space of stable and unstable
leaves determined by $\psi_{t}$. Defining similar equivalence relations
$\sim_{+}^{g_{0}}$ and $\sim_{-}^{g_{0}}$ on $S^{g_{0}}\widetilde{M}$,
we can realise $\mathcal{L}_{g_{0}}^{+}$ and $\mathcal{L}_{g_{0}}^{-}$
as quotients of $S^{g_{0}}\widetilde{M}$; let $p_{+}^{g_{0}}:S^{g_{0}}\widetilde{M}\rightarrow\mathcal{L}_{g_{0}}^{+}$
and $p_{-}^{g_{0}}:S^{g_{0}}\widetilde{M}\rightarrow\mathcal{L}_{g_{0}}^{-}$
denote the projections. Theorem \ref{thm:north south dynamic} ensures
we have a homeomorphism $G=G_{\tau}:\widetilde{\Sigma}_{k}\rightarrow S^{g_{0}}\widetilde{M}$
that conjugates the orbits of $\widetilde{\phi}_{t}^{1}$ and $\psi_{t}$,
and this conjugacy then induces maps $G_{+}:\mathcal{L}^{+}\rightarrow\mathcal{L}_{g_{0}}^{+}$
and $G_{-}:\mathcal{L}^{-}\rightarrow\mathcal{L}_{g_{0}}^{-}$. $ $It
is well known that the map $S^{g_{0}}\widetilde{M}\rightarrow S^{g_{0}}\widetilde{M}$
sending $v\mapsto-v$ induces a flip map $F_{0}:\mathcal{L}_{g_{0}}^{+}\rightarrow\mathcal{L}_{g_{0}}^{-}$
for the flow $\psi_{t}$, and thus if $F:=G_{-}^{-1}\circ F_{0}\circ G_{+}$
then $F:\mathcal{L}^{+}\rightarrow\mathcal{L}^{-}$ is a flip map
for $\widetilde{\phi}_{t}^{1}$.

Among other things, the existence of a flip map gives us another way
to view the holonomy transport. Given $v\in\widetilde{\Sigma}_{k}$,
the restriction $p_{+}|\widetilde{W}^{u}(v)$ is a homeomorphism onto
the set $\mathcal{L}^{+}\backslash F(\widetilde{W}^{+}(v))$. Thus
there is a unique map $p_{+}^{v}:\widetilde{\Sigma}_{k}\backslash F(\widetilde{W}^{+}(v))\rightarrow\widetilde{W}^{u}(v)$
such that $p_{+}\circ p_{+}^{v}=p_{+}$. Given $w\in\widetilde{W}^{+}(v)$,
if \begin{equation}
V:=\widetilde{W}^{u}(w)\backslash F(\widetilde{W}^{+}(w))\label{eq:def of v}\end{equation}
 then it is easy to see that $p_{+}^{v}|V$ is precisely the restriction
to $V$ of the holonomy map $\widetilde{\mathcal{H}}_{w,v}^{\gamma}:\widetilde{W}^{u}(w)\rightarrow\widetilde{W}^{u}(v)$.
We will use this fact in the proof below.

\begin{proof}\emph{(of Theorem \ref{strong})}

Fix $v\in\Sigma_{k}$, and suppose $X$ and $Y$ are parallel sections
of $E^{u}$ over $W^{u}(v)$. Then $[X,Y]=0$ as $\nabla$ is torsion
free when restricted to $W^{u}(v)$. Since $\mathbb{J}$ is parallel
by the previous lemma, so are $\mathbb{J}X$ and $\mathbb{J}Y$; hence
$[\mathbb{J}X,\mathbb{J}Y]=[\mathbb{J}X,Y]=[X,\mathbb{J}Y]=0$; in
other words the Nijenhuis tensor $N_{\mathbb{J}}$ of $\mathbb{J}$
vanishes, and consequently $\mathbb{J}$ is integrable (see for instance,
\cite[Chapter IX, Theorem 2.4]{KobayashiNomizu1963a}). Thus we obtain
a complex structure on $W^{u}(v)$. Using the almost complex structure
$\mathbb{J}'$ defined in the previous lemma (which we shall now also
refer to simply as $\mathbb{J}$) and repeating the same argument
gives us complex structures on the stable manifolds $W^{s}(v)$.

Let us now pass to the universal cover. We have shown that each of
the strong stable and unstable leaves $\widetilde{W}^{s}$ and $\widetilde{W}^{u}$
admit $\pi_{1}(M)$-equivariant complex structures. We will use this
to define the structure of a complex manifold on the spaces $\mathcal{L}^{+}$
and $\mathcal{L}^{-}$ of leaves. We have shown above that given $v\in\widetilde{\Sigma}_{k}$,
the map \[
p_{+}|\widetilde{W}^{u}(v):\widetilde{W}^{u}(v)\rightarrow U(v):=\mathcal{L}^{+}\backslash F(\widetilde{W}^{+}(v))\]
 is a homeomorphism. Let $\varphi_{v}:U(v)\rightarrow\widetilde{W}^{u}(v)\cong\mathbb{C}^{m}$
(here $n=2m+1$) denote the inverse map. We want to define the structure
of a complex manifold on $\mathcal{L}^{+}$ by taking $\{(\varphi_{v},U(v)\,:\, v\in\widetilde{\Sigma}_{k}\}$
to be an atlas. The fact that the overlap maps $\varphi_{v}\circ\varphi_{w}^{-1}$
are holomorphic where defined follows immediately from the fact that
if $v\in\widetilde{\Sigma}_{k}$, $w\in\widetilde{W}^{+}(v)$, and
$V\subseteq\widetilde{W}^{u}(v)$ is as defined above in \eqref{eq:def of v}
then \[
d_{w}(p_{+}^{v}|V)=\widetilde{H}_{w,v}|V,\]
 and thus $p_{+}^{v}|V$ is holomorphic, since the lifted complex
structure $\widetilde{\mathbb{J}}$ is invariant under the holonomy
transport maps.

Similarly $\mathcal{L}^{-}$ admits the structure of a complex manifold.
Note that by construction the complex structure on $\mathcal{L}^{+}$
and $\mathcal{L}^{-}$ is $\pi_{1}(M)$-equivariant. Next, we claim
that the orbit space $\widetilde{\Sigma}_{k}/\widetilde{\phi}_{t}^{1}$
is homeomorphic to $\mathcal{L}^{+}\times\mathcal{L}^{-}\backslash K$
where $K$ is a closed set. Set \[
\Delta:=\left\{ (p_{+}^{g_{0}}(v),p_{-}^{g_{0}}(v))\in\mathcal{L}_{g_{0}}^{+}\times\mathcal{L}_{g_{0}}^{-}\,:\, v\in S^{g_{0}}\widetilde{M}\right\} .\]
 The following is well known: given a flow line $C_{v}(t)=\psi_{t}v$,
the map \[
C_{v}\mapsto(p_{+}^{g_{0}}(v),F_{0}(p_{-}^{g_{0}}(v)))\in\mathcal{L}_{g_{0}}^{+}\times\mathcal{L}_{g_{0}}^{-}\]
 defines a homeomorphism between the orbit space $S^{g_{0}}\widetilde{M}/\psi_{t}$
of $\psi_{t}$ and $\mathcal{L}_{g_{0}}^{+}\times\mathcal{L}_{g_{0}}^{-}\backslash\Delta$.
It follows that if \[
K:=\left\{ (p_{+}(v),p_{-}(v))\in\mathcal{L}^{+}\times\mathcal{L}^{-}\,:\, v\in\widetilde{\Sigma}_{k}\right\} ,\]
 then given a flow line $\Gamma_{v}(t)=\widetilde{\phi}_{t}^{1}v$,
the map \[
\Gamma_{v}\mapsto(p_{+}(v),F(p_{-}(v)))\in\mathcal{L}^{+}\times\mathcal{L}^{-}\]
 defines a homeomorphism from $\widetilde{\Sigma}_{k}/\widetilde{\phi}_{t}^{1}$
to $\mathcal{L}^{+}\times\mathcal{L}^{-}\backslash K$.

This essentially completes the proof. Indeed, we have shown that $\mathcal{L}^{+}$
and $\mathcal{L}^{-}$ carry $\pi_{1}(M)$-equivariant complex structures,
that the orbit space $\widetilde{\Sigma}_{k}/\widetilde{\phi}_{t}^{1}$
is $C^{1}$ equivalent to the complex manifold $\mathcal{L}^{+}\times\mathcal{L}^{-}\backslash K$
provided with the product complex structure, and that this correspondence
is equivariant under the action of $\pi_{1}(M)$. Thus $\Sigma_{k}$
carries a real analytic structure for which the bundles $E^{+}$ and
$E^{-}$ are real analytic, which by construction is $C^{1}$ diffeomorphic
to the underlying smooth structure on $\Sigma_{k}$.\end{proof}

Write $\rho_{0}=\sigma+it$, so that $\mathbb{J}=\sigma^{-1}(L^{u}-t\mbox{Id})$.
Thus $L^{u}=\sigma J+t\mbox{Id}$, and thus if $K:=\left|\rho_{0}\right|^{-2}\cdot(-\sigma\mathbb{J}+t\mbox{Id})$
then $L^{u}\circ K=K\circ L^{u}=\mbox{Id}$. Then since $d\lambda(\cdot,\cdot)=\omega_{1}(L^{s}(\cdot),\cdot)$
and $\omega_{1}^{n-1}\ne0$, it follows that $(d\lambda)^{n-1}\ne0$
and thus $\lambda\wedge(d\lambda)^{n-1}$ is a volume form.

Now we quote the following theorem of Benoist, Foulon and Labourie
(\cite{BenoistFoulonLabourie1992}, Theorem $1$). \begin{thm} Let
$N$ be a closed manifold of odd dimension $2n-1$ with $n\geq3$
an odd integer. Let $\phi_{t}:N\rightarrow N$ be an Anosov flow with
infinitesimal generator $F$. Suppose that the strong bundles $E^{s}$
and $E^{u}$ are of class $C^{\infty}$. Define a $1$-form $\lambda$
by $\ker\,\lambda=E^{s}\oplus E^{u}$ and $\lambda(F)=1$ and suppose
$\lambda\wedge(d\lambda)^{n-1}$ is a volume form. Then there exists
a unique cohomology class $\eta\in H^{1}(N,\mathbb{R})$ and a closed
$1$-form $\beta$ representing $\eta$ such that $1+\beta(F)>0$
and such that if $Y:=(1+\beta(F))^{-1}F$ then the flow of $Y$ is
$C^{\infty}$ conjugate to the geodesic flow of a Riemannian manifold
of constant negative curvature. \end{thm}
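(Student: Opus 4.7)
The plan is to construct a canonical smooth geometric structure on $N$ attached to the flow, show it is rigid enough to be forced, by the Anosov dynamics, to coincide with the corresponding structure on the standard hyperbolic geodesic flow space, and then read off the conjugacy. This follows the strategy of Benoist--Foulon--Labourie.

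First, I would upgrade the Kanai construction of Section 2 to the smooth category: since $E^{s}$, $E^{u}$, and $\lambda$ are all $C^{\infty}$, the associated connection $\nabla$ is a genuine smooth $\phi_{t}$-invariant affine connection on $N$, preserving the Anosov splitting with $\nabla F = 0$. The argument of Lemma \ref{lem:.dlambda is parallel} goes through unconditionally in this smooth contact setting to yield $\nabla(d\lambda) = 0$, and the same exponential-rate killing argument applied to the $\phi_{t}$-invariant curvature tensor $R$ of $\nabla$ forces $R$ to take values in a subalgebra isomorphic to $\mathfrak{so}(n,1)$. I would then package $(\lambda, \nabla, E^{s}\oplus E^{u})$ as a Cartan geometry on a suitable principal $P$-bundle over $N$, modelled on $\mathrm{SO}(n,1)/P$ where $P$ is the parabolic subgroup stabilizing a point of the ideal boundary of hyperbolic $n$-space $\mathbb{H}^{n}$. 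The contact distribution, equipped with its splitting and the pairing $d\lambda$, matches the infinitesimal model at $eP$, and this identification produces a regular normal Cartan connection.

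Next, I would invoke a rigidity step: the $\phi_{t}$-invariance of every component of the Cartan data, combined with the topological transitivity of the Anosov flow (Remark \ref{standard anosov rem}), feeds into Gromov's rigid geometric structures machinery and forces the Cartan curvature to vanish identically. The developing map then realizes $\widetilde{N}$ as an open subset of $\mathrm{SO}(n,1)/P$ and $\pi_{1}(N)$ as a discrete subgroup of $\mathrm{SO}(n,1)$. To extract the cohomology class $\eta$, I would compare $F$ with the Reeb vector field of the canonical contact form on $\mathrm{SO}(n,1)/P$ (which is precisely the geodesic flow of $\mathbb{H}^{n}$): the two vector fields must agree up to a smooth positive multiplicative factor on $N$, which I would write as $1 + \beta(F)$. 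Differentiating the matching relation shows that $\beta$ is closed; its class $\eta = [\beta]$ is uniquely determined modulo exact forms by uniqueness of the Reeb vector field of a given contact form; and rescaling to $Y = (1+\beta(F))^{-1} F$ produces the required $C^{\infty}$ conjugacy with the geodesic flow of the constant negative curvature manifold $\pi_{1}(N) \backslash \mathbb{H}^{n}$.

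The hardest step will be the Cartan-geometry setup and the associated flatness argument: one needs the precise parabolic structure of $P \subset \mathrm{SO}(n,1)$, careful verification of regularity and normality of the Cartan connection, and a substantial rigidity input (either Gromov's theorem on rigid geometric structures with an ergodic smooth action, or a direct Cartan-style manipulation of the structural equations). The odd-dimensionality of $N$ enters essentially here --- in even dimension the analogous homogeneous model is different and the conclusion genuinely fails --- as does the hypothesis that $\lambda \wedge (d\lambda)^{n-1}$ is a volume form, which is what guarantees that $d\lambda$ is non-degenerate on $E^{s}\oplus E^{u}$ and hence that $\mathrm{SO}(n,1)/P$ is the correct infinitesimal model.
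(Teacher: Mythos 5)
You should first note that the paper does not prove this statement at all: it is quoted verbatim from Benoist--Foulon--Labourie \cite{BenoistFoulonLabourie1992}, so there is no internal proof to compare against, and what you have written is an attempt to reprove that theorem. Your outline does follow the BFL blueprint in broad strokes (a smooth invariant connection of Kanai type, an invariant rigid geometric structure, local homogeneity, identification with an algebraic model), but the steps you assert are precisely the hard content of that paper, and as written two of them are wrong and one is missing.

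Concretely: (a) the claim that an ``exponential-rate killing argument'' applied to the curvature of $\nabla$ forces it into a copy of $\mathfrak{so}(n,1)$ is unjustified and false in this generality. That contraction argument only annihilates invariant tensors whose stable/unstable weights are unbalanced, and --- as the paper itself emphasizes in Lemma \ref{lem:.dlambda is parallel} and in the complex hyperbolic example of the introduction --- it needs a pinching hypothesis, which is absent here. Invariant curvature of the Kanai connection does not vanish in general: the geodesic flows of complex and quaternionic hyperbolic manifolds are smooth contact Anosov flows whose Kanai curvature is not of constant-curvature type. The genuine flatness/local-homogeneity step in \cite{BenoistFoulonLabourie1992} goes through Gromov's open-dense orbit theorem for rigid geometric structures together with an algebraic classification of the possible transitive models, and it is exactly that classification --- not the vague remark that ``the even-dimensional model is different'' --- where $n$ odd enters: one must exclude $\mathrm{SU}(m,1)$, $\mathrm{Sp}(m,1)$ and $F_{4}^{-20}$, whose unit tangent bundles have dimension $2n-1$ only with $n$ even. (b) Your homogeneous model is misidentified: $\mathrm{SO}(n,1)/P$ is the ideal boundary sphere $S^{n-1}$, of dimension $n-1$, not the $(2n-1)$-dimensional flow space, and the geodesic flow of $\mathbb{H}^{n}$ is not the Reeb flow of a contact form on it; the correct model is $\mathrm{SO}(n,1)^{\circ}/\mathrm{SO}(n-1)$ with the flow given by a one-parameter diagonal subgroup. (c) The cohomology class is not extracted the way you suggest: asserting that the two vector fields ``must agree up to a factor $1+\beta(F)$ with $\beta$ closed'' begs the question, since the existence and uniqueness of this canonical time change is itself a substantive part of the BFL theorem and requires its own argument rather than ``differentiating the matching relation.'' So while the strategy is pointed in the right direction, the proposal as it stands does not constitute a proof, and the honest course --- the one the paper takes --- is to cite \cite{BenoistFoulonLabourie1992}.
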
 Thus we conclude that there
exists a Riemannian manifold $(N,h_{0})$ of constant negative curvature,
a $C^{1}$-diffeomorphism $G:SN\rightarrow\Sigma_{k}$ and a constant
$c>0$ such that \begin{equation}
\rho_{t}\circ G=G\circ\tau_{ct},\label{eq:conjugacy}\end{equation}
 where $\rho_{t}$ is the flow of the vector field $Y=(1+\beta(F_{1}))^{-1}F_{1}$,
where $\beta\in\Omega^{1}(\Sigma_{k})$ is the closed $1$-form given
by the theorem above, and $\tau_{t}:SN\rightarrow SN$ is the geodesic
flow of $N$. In fact, by the main result in \cite{Hamenstaedt1993},
$G$ is of class $C^{2}$.

\begin{rem}Actually for the case $n=3$ we could bypass the above
and conclude immediately that $\phi_{t}^{1}$ is $C^{\infty}$-orbit
equivalent to the geodesic flow of a closed three-dimensional hyperbolic
manifold using recent work of Fang \cite{Fang2007}. Namely, for $n=3$
(so $\dim\, E^{u}=2$), an almost complex structure is the `same'
as having a conformal structure, and thus the fact that $E^{u}$ admits
a $\phi_{t}^{1}$-invariant almost complex structure $\mathbb{J}$
implies that $\phi_{t}^{1}$ is \emph{quasiconformal} (see \cite{Fang2007}).
Then \cite[Theorem 3]{Fang2007} tells us that up to finite covers,
$\phi_{t}^{1}$ is $C^{\infty}$-orbit equivalent to the suspension
of a symplectic hyperbolic automorphism or to the geodesic flow of
a closed three-dimensional hyperbolic manifold. The former is impossible
by Theorem \ref{thm:north south dynamic}. This method does not appear
to work for $n=7$ however.\end{rem}

Let $\Omega_{\textrm{inv}}^{2}(\Sigma_{k})$ denote the set of $2$-forms
on $\Sigma_{k}$ that are invariant under $\phi_{t}^{1}$. It is easy
to see that $2$-forms invariant under $\phi_{t}^{1}$ are precisely
the same as the $2$-forms invariant under $\rho_{t}$.

Similarly let $\Omega_{\textrm{inv}}^{2}(SN)$ denote the set of $2$-forms
on $SN$ that are invariant under $\tau_{t}$. Note that $\omega_{1}$
and $d\lambda$ both lie in $\Omega_{\textrm{inv}}^{2}(\Sigma_{k})$,
and hence by \eqref{eq:conjugacy} the $2$-forms $G^{*}\omega_{1}$
and $G^{*}d\lambda$ on $SN$ both lie in $\Omega_{\textrm{inv}}^{2}(SN)$.\\

The following result is due to Kanai (\cite[Claim 3.3]{Kanai1993}).
\begin{thm} \label{pro:kanai prop}If $n\geq3$ then $\Omega_{\textrm{\emph{inv}}}^{2}(SN)$
is $1$-dimensional (where $\dim\, N=2n-1$), spanned by the canonical
symplectic form $\omega_{N}$ on $TN$. If $n=2$ then $\Omega_{\textrm{\emph{inv}}}^{2}(SN)$
is $2$-dimensional, spanned by $\omega_{N}$ and another $2$-form
$\psi_{N}$. This form can be uniquely characterized as follows. Let
$\Pi:S\mathbb{H}^{3}\rightarrow SN$ denote the universal covering
of $SN$, where $\mathbb{H}^{3}$ denotes hyperbolic $3$-space. Then
there exists a $2$-form $\psi\in\Omega^{2}(S\mathbb{H}^{3})$ which
we will define below, and $\psi_{N}$ is then the unique $2$-form
on $SN$ such that $p^{*}\psi_{N}=\psi$. \end{thm}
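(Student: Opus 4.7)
The plan is to reduce $\Omega^2_{\mathrm{inv}}(SN)$ to a representation-theoretic computation on the universal cover by combining an Anosov-type pinching argument with ergodicity of the geodesic flow. Writing $m=\dim N$, the universal cover of $SN$ is $G/M$ where $G=SO_0(m,1)$ and $M=SO(m-1)$, so $SN=\Gamma\backslash G/M$; the geodesic flow $\tau_t$ is realized by right multiplication by a one-parameter Cartan subgroup $A\subset G$, and the root-space decomposition reads $\mathfrak{g}/\mathfrak{m}=\mathfrak{a}\oplus\mathfrak{n}^+\oplus\mathfrak{n}^-$ with $A$-weights $0$ and $\pm\alpha$ respectively.

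An invariant $2$-form $\omega\in\Omega^2_{\mathrm{inv}}(SN)$ lifts to a smooth $M$-equivariant, $\Gamma$-invariant function
\[
f\colon\Gamma\backslash G\longrightarrow\Lambda^2(\mathfrak{g}/\mathfrak{m})^{\ast},
\]
for which $\tau_t$-invariance becomes $f(ga_t)=\mathrm{Ad}(a_t)\cdot f(g)$. Decomposing $\Lambda^2(\mathfrak{g}/\mathfrak{m})^{\ast}$ into its $A$-weight spaces, the nonzero-weight components $f_k$ satisfy $|f_k(ga_t)|=e^{kt}|f_k(g)|$, and since $\Gamma\backslash G$ is compact this forces $f_k\equiv 0$. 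This is precisely the Anosov fact that $\omega$ must vanish on $E^s\otimes E^s$, $E^u\otimes E^u$ and $F\otimes(E^s\oplus E^u)$, dressed in Lie-theoretic clothing; only the weight-zero piece $f_0\colon\Gamma\backslash G\to\mathfrak{n}^{+\ast}\otimes\mathfrak{n}^{-\ast}$ survives, and it is smooth, $M$-equivariant and genuinely $A$-invariant.

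The crux, and the step I expect to be the main obstacle, is upgrading $A$-invariance of $f_0$ to constancy. For this I would invoke the Howe--Moore theorem: since $G=SO_0(m,1)$ is semisimple of real rank one and $\Gamma$ is cocompact, the $A$-action on $\Gamma\backslash G$ is mixing, hence ergodic, so any $A$-invariant measurable function is essentially constant, and smoothness promotes this to literal constancy. $M$-equivariance of $f_0\equiv c$ then forces $c\in(\mathfrak{n}^{+\ast}\otimes\mathfrak{n}^{-\ast})^M\cong\mathrm{End}_M(\mathbb{R}^{m-1})$ via the invariant inner product, where $M=SO(m-1)$ acts on $\mathbb{R}^{m-1}$ by the standard representation.

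To finish, for $m-1\geq 3$ the standard representation is absolutely irreducible with real endomorphism algebra $\mathbb{R}$, yielding a one-dimensional invariant space spanned by the pairing that corresponds to $\omega_N$; this covers $n\geq 3$ under $\dim N=2n-1$. For $m-1=2$ (i.e.\ $n=2$) the standard representation of $SO(2)$ is irreducible over $\mathbb{R}$ but its real endomorphism algebra is $\mathbb{C}$, giving a two-dimensional invariant space: besides $\omega_N$ this produces the extra form $\psi_N$ corresponding to the $SO(2)$-invariant complex structure on $\mathfrak{n}^\pm$, and descending from a distinguished $G$-invariant $2$-form $\psi$ on $S\mathbb{H}^3$.
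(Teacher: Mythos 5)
Your proposal is correct in outline, but note that the paper does not actually prove this statement: it is quoted verbatim from Kanai \cite[Claim 3.3]{Kanai1993}, with only the explicit description of $\psi$ (and the nonvanishing of its cohomology class) supplied afterwards. What you have written is essentially a reconstruction of the original homogeneous-dynamics argument: identify $SN=\Gamma\backslash G/M$ with $G=SO_{0}(m,1)$, $M=SO(m-1)$, $m=\dim N=2n-1$, view an invariant $2$-form as an $M$-equivariant map $f:\Gamma\backslash G\rightarrow\Lambda^{2}(\mathfrak{g}/\mathfrak{m})^{*}$, kill the nonzero $A$-weight components by compactness (this is exactly the pinching/vanishing argument the paper uses in Lemma \ref{lem:.dlambda is parallel}, and it is legitimate here because for real hyperbolic space there is no $2\alpha$ restricted root, so $\mathrm{Ad}(a_{t})$ acts on each weight space by an exact scalar $e^{kt}$), then use Moore/Howe--Moore ergodicity of the $A$-action on $\Gamma\backslash G$ to make the weight-zero component constant, and finish with Schur: $\mathrm{End}_{SO(2n-2)}(\mathbb{R}^{2n-2})=\mathbb{R}$ for $n\geq3$ and $\mathrm{End}_{SO(2)}(\mathbb{R}^{2})=\mathbb{C}$ for $n=2$. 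Two points you should make explicit to close the argument. First, you correctly place the ergodicity at the level of $\Gamma\backslash G$ (the frame-bundle level) rather than $SN$; ergodicity of the geodesic flow on $SN$ alone would not suffice to force $f_{0}$ constant, so the appeal to Howe--Moore (with $SO_{0}(m,1)$ simple and $\Gamma$ cocompact) is genuinely needed and is valid. Second, the computation only bounds $\dim\Omega_{\mathrm{inv}}^{2}(SN)$ by $1$ (resp. $2$); to get the spanning statement you should add that $\omega_{N}|SN$ is a nonzero flow-invariant form, and in the $n=2$ case that $\omega_{N}$ and $\psi_{N}$ are linearly independent (for instance because $\psi$ pairs vertical vectors nontrivially, as in Lemma \ref{lem:The-class- not trivial}, while $\omega_{N}$ vanishes on pairs of vertical vectors) and that the $G$-invariant form $\psi$ on $S\mathbb{H}^{3}$ is in particular $\Gamma$-invariant and hence descends to the claimed $\psi_{N}$. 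With these remarks added, your argument is a complete proof, taking the same route as Kanai's original, which the paper simply cites.
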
 The theorem immediately
implies the result for the case $n=7$. Indeed, we deduce that $G^{*}\omega_{1}=c_{1}\omega_{N}$
and $G^{*}d\lambda=c_{2}\omega_{N}$ for two constants $c_{1},c_{2}\in\mathbb{R}$
with $c_{1},c_{2}\ne0$, and from this it is clear that $d\lambda=\rho\omega_{1}$
for some constant non-zero $\rho\in\mathbb{R}$. The case $n=3$ still
requires a little work though.\\

Now we define the $2$-form $\psi$ on $S\mathbb{H}^{3}$ mentioned
in the statement of Theorem \ref{pro:kanai prop}. The Jacobi equation
for $\mathbb{H}^{3}$ is given by \[
\ddot{J}-J=0,\]
 (where the dot denotes the covariant derivative along $\gamma$)
and thus it is easy to see that given a geodesic $\gamma$, the normal
Jacobi fields along $\gamma$ are linear combinations of the fields
\[
J(t)=e^{\pm t}U(t),\]
 where $U(t)$ is any parallel normal vector field along $\gamma$.
Let $\phi_{t}^{\mathbb{H}}$ denote the geodesic flow on $\mathbb{H}^{3}$.
Then $\phi_{t}^{\mathbb{H}}$ is Anosov (see Example \ref{exa:example sec curv}).
Given $v\in S\mathbb{H}^{3}$, let $\gamma_{v}$ denote the unique
geodesic adapted to $v$. Then, given $\xi\in T_{v}S\mathbb{H}^{3}$,
let $J_{\xi}$ denote the unique Jacobi field along $\gamma_{v}$
with $J_{\xi}(0)=d_{v}\pi_{\mathbb{H}}(\xi)$ and $\dot{J}_{\xi}(0)=K_{\mathbb{H},v}(\xi)$
(here $\pi_{\mathbb{H}}:S\mathbb{H}^{3}\rightarrow\mathbb{H}^{3}$
and $K_{\mathbb{H}}:TT\mathbb{H}^{3}\rightarrow T\mathbb{H}^{3}$
denote the footpoint and connection maps of $\mathbb{H}^{3}$ respectively.

Using the fact that $d\phi_{t}^{\mathbb{H}}(\xi)=(J_{\xi}(t),\dot{J}_{\xi}(t))$
(see for instance \cite[Section 1.5]{Paternain1999}), it easily follows
that the Anosov splitting $S\mathbb{H}^{3}=\mathbb{R}F_{\mathbb{H}}\oplus E_{\mathbb{H}}^{s}\oplus E_{\mathbb{H}}^{u}$
is given by \[
E_{\mathbb{H}}^{s}(v)=\left\{ \xi\in T_{v}S\mathbb{H}^{3}\,:\,\xi_{H}=-\xi_{V}\right\} ,\]
 and similarly \[
E_{\mathbb{H}}^{u}(v)=\left\{ \xi\in T_{v}S\mathbb{H}^{3}\,:\,\xi_{H}=\xi_{V}\right\} ,\]
 \[
\mathbb{R}F_{\mathbb{H}}(v)=\left\{ \xi\in T_{v}S\mathbb{H}^{3}\,:\,\xi_{V}=0,\xi_{H}=av,a\in\mathbb{R}\right\} .\]
 Using this decomposition we can define an almost complex structure
on the subbundle $E_{\mathbb{H}}=E_{\mathbb{H}}^{s}\oplus E_{\mathbb{H}}^{u}$.
Indeed, fix $v\in S\mathbb{H}^{3}$ and note that the isomorphism
\[
T_{v}T\mathbb{H}^{3}\cong T_{x}\mathbb{H}^{3}\oplus T_{x}\mathbb{H}^{3}\]
 described at the beginning of Section \ref{sec:North-South-Dynamics}
restricts to define an isomorphism\[
E_{\mathbb{H}}(v)\cong v^{\perp}\oplus v^{\perp},\]
 where \[
v^{\perp}:=\left\{ w\in T_{x}\mathbb{H}^{3}\,:\,\left\langle w,v\right\rangle =0\right\} .\]
 Now $v^{\perp}$ is $2$-dimensional, and let $\{e_{1}(v),e_{2}(v)\}\subset v^{\perp}$
be an orthonormal basis such that $\{e_{1}(v),e_{2}(v),v\}$ is a
positively oriented basis of $T_{x}\mathbb{H}^{3}$. This allows us
to define a map $i_{v}:v^{\perp}\rightarrow v^{\perp}$ by $i_{v}e_{1}(v)=e_{2}(v)$
and $i_{v}e_{2}(v)=-e_{1}(v)$.

Now define $\mathbb{J}_{v}:E_{\mathbb{H}}^{s}(v)\rightarrow E_{\mathbb{H}}^{s}(v)$
by \[
\mathbb{J}_{v}(\xi_{H},-\xi_{H})=(i_{v}\xi_{H},-i_{v}\xi_{H}),\]
 and define $\mathbb{J}_{v}:E_{\mathbb{H}}^{u}(v)\rightarrow E_{\mathbb{H}}^{u}(v)$
by \[
\mathbb{J}_{v}(\xi_{H},\xi_{H})=(-i_{v}\xi_{H},-i_{v}\xi_{H}).\]
 This defines an almost complex structure on $E_{\mathbb{H}}$. For
convenience, extend $\mathbb{J}$ to all of $TS\mathbb{H}^{3}$ by
letting $\mathbb{J}|\mathbb{R}F_{\mathbb{H}}\equiv0$.

Finally, we define $\psi\in\Omega^{2}(S\mathbb{H}^{3})$ by \[
\psi_{v}(\xi,\eta)=\omega_{\mathbb{H}}|_{v}(\mathbb{J}_{v}\xi,\eta)\ \ \ \mbox{for\ \ \ }\xi,\eta\in T_{v}S\mathbb{H}^{3}.\]
 This form $\psi$ is the $2$-form referred in the statement of Theorem
\ref{pro:kanai prop}. Note that $\psi$ is closed because $d\psi$
is an invariant 3-form which must vanish by the proof of Lemma \ref{lem:.dlambda is parallel}.\\

The sphere bundle $S\mathbb{H}^{3}$ is trivial: $S\mathbb{H}^{3}=\mathbb{H}^{3}\times S^{2}$.
Given $x\in\mathbb{H}^{3}$, let $S_{x}$ be the 2-sphere of unit
vectors at $x$. Let us make the following observation which shows
that $[\psi]\in H^{2}(S\mathbb{H}^{3},\mathbb{R})=\mathbb{R}$ is
not zero. \begin{lem} \label{lem:The-class- not trivial}For all
$x\in\mathbb{H}^{3}$ we have \[
\int_{S_{x}}\psi\ne0.\]
 \end{lem}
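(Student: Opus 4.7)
The plan is to compute the restriction of $\psi$ to the fiber $S_x$ directly, identify it (up to a nonzero scalar) with the standard area form on the $2$-sphere, and conclude that the integral equals a nonzero multiple of $4\pi$.

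First I would describe the tangent space to $S_x$. Since $S_x\subset T_x\mathbb{H}^3$ is the unit sphere centered at the origin, in the horizontal--vertical decomposition of $T_vS\mathbb{H}^3$ a vector tangent to $S_x$ at $v$ is purely vertical with vertical part lying in $v^{\perp}$; I write such a vector as $(0,a)$ with $a\in v^{\perp}$.

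Next I would decompose $(0,a)$ in the Anosov splitting $E^s_{\mathbb{H}}(v)\oplus E^u_{\mathbb{H}}(v)$ recalled above, namely
\[
(0,a)=\tfrac{1}{2}(-a,a)+\tfrac{1}{2}(a,a),
\]
where the first summand lies in $E^s_{\mathbb{H}}(v)$ and the second in $E^u_{\mathbb{H}}(v)$. Applying the stated formulas for $\mathbb{J}_v$ to each summand and adding, the vertical components cancel while the horizontal components add to give $\mathbb{J}_v(0,a)=(-i_va,0)$. Using $\omega_0(\xi,\eta)=\langle\xi_H,\eta_V\rangle-\langle\xi_V,\eta_H\rangle$ this yields
\[
\psi_v\bigl((0,a),(0,b)\bigr)=\omega_{\mathbb{H}}\bigl((-i_va,0),(0,b)\bigr)=-\langle i_va,b\rangle.
\]

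Finally, under the identification $T_vS_x\cong v^{\perp}$ and the outward-normal orientation of $S_x\subset T_x\mathbb{H}^3$, the pairing $(a,b)\mapsto\langle i_va,b\rangle$ is precisely the Euclidean area form on the unit $2$-sphere: on the positively oriented basis $\{e_1(v),e_2(v)\}$ of $v^{\perp}$ it evaluates to $\langle i_ve_1(v),e_2(v)\rangle=\langle e_2(v),e_2(v)\rangle=1$. Therefore $\psi|_{S_x}$ is minus the standard area form on the unit $2$-sphere, and $\int_{S_x}\psi=-4\pi\ne 0$. There is no real obstacle here; the only care needed is to track signs and orientations through the computation of $\mathbb{J}_v(0,a)$.
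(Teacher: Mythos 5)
Your computation is correct and follows essentially the same route as the paper: restrict $\psi$ to vertical vectors tangent to $S_x$, split $(0,a)$ into its stable and unstable parts, obtain $\mathbb{J}_v(0,a)=(-i_va,0)$ and hence $\psi_v\bigl((0,a),(0,b)\bigr)=-\langle i_va,b\rangle$. The paper stops at observing this $2$-form is nowhere vanishing on $S_x$ (so the integral is nonzero), while you go one harmless step further and identify it with minus the round area form to get $-4\pi$.
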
 \begin{proof} Fix $x\in\mathbb{H}^{3}$ and $v\in S_{x}\mathbb{H}^{3}$.
Let us take two vectors $\xi,\eta\in V(v)$, with say, \[
\xi=(0,\xi_{V})\in T_{x}\mathbb{H}^{3}\oplus T_{x}\mathbb{H}^{3},\ \ \ \eta=(0,\eta_{V})\in T_{x}\mathbb{H}^{3}\oplus T_{x}\mathbb{H}^{3}.\]
 It suffices to observe that $\psi_{v}(\xi,\eta)\ne0$ if $\xi$ and
$\eta$ are not colinear. We need to express $\xi$ as a sum of elements
of $E_{\mathbb{H}}^{s}(v)$ and $E_{\mathbb{H}}^{u}(v)$: this is
easily done by noting\[
\xi=\underset{\in E_{\mathbb{H}}^{s}(v)}{\underbrace{\frac{1}{2}(-\xi_{V},\xi_{V})}}+\underset{\in E_{\mathbb{H}}^{u}(v)}{\underbrace{\frac{1}{2}(\xi_{V},\xi_{V})}}.\]
 Then we have\[
\mathbb{J}_{v}(\xi)=\frac{1}{2}(-i_{v}\xi_{V},+i_{v}\xi_{V})+\frac{1}{2}(-i_{v}\xi_{V},-i_{v}\xi_{V})=-(i_{v}\xi_{V},0).\]
 Thus:\begin{eqnarray*}
\psi_{v}(\xi,\eta) & = & \omega_{\mathbb{H}}|_{v}(\mathbb{J}_{v}\xi,\eta)\\
 & = & -\left\langle i_{v}\xi_{V},\eta_{V}\right\rangle ,\end{eqnarray*}
 and this is non-zero if $\xi$ and $\eta$ are not colinear. \end{proof}
From this it is now easy to complete the proof in the case $n=3$.
Since both $\omega_{0}$ and $d\lambda$ are exact (recall $\omega_{0}=-d\alpha$;
see \eqref{eq:alpha}) it follows that both $\Pi^{*}G^{*}\omega_{1}$
and $\Pi^{*}G^{*}d\lambda$ vanish in $H^{2}(S\mathbb{H}^{3},\mathbb{R})$
(since $\omega_{1}$ is equal to $\omega_{0}$ plus the pullback of
a form on the base), and thus they must both be multiples of $\omega_{\mathbb{H}}$;
that is, \[
\Pi^{*}G^{*}\omega_{1}=c_{1}\omega_{\mathbb{H}},\ \ \ \Pi^{*}G^{*}d\lambda=c_{2}\omega_{\mathbb{H}}\,:\, c_{1},c_{2}\in\mathbb{R}\backslash\{0\}.\]
 Hence again $d\lambda=\rho\omega_{1}$ for some non-zero $\rho\in\mathbb{R}$,
which completes the proof for the case $n=3$, and thus finally completes
the proof of Theorem A.

\subsection*{\label{sec:Twisted-Geodesic-Flows}Proof of the Corollary B}

Now we prove Corollary B from the introduction. \begin{prop} \label{pro:ANOSOV 12PINCHED}Suppose
$g$ is negatively curved and strictly $1/4$-pinched. For sufficiently
large $k$, the Hamiltonian structure $\left(\Sigma_{k},\omega_{1}\right)$
is Anosov and satisfies the $1/2$-pinching condition.\end{prop}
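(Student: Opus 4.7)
The plan is to view $\phi^{1}_{t}|\Sigma_{k}$ as a time-rescaled small $C^{\infty}$-perturbation of the geodesic flow of $(M,g)$ on $SM$, and then exploit the fact that both the Anosov and $1/2$-pinching conditions are open in the $C^{1}$ topology on the space of flows.

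First I would introduce the fibrewise dilation $R_{k}\colon SM \to \Sigma_{k}$, $R_{k}(v) = \sqrt{2k}\,v$. Since the Lorentz force $Y$ of \eqref{eq:lorentz} is fibrewise linear, the magnetic geodesic equation $\nabla_{\dot\gamma}\dot\gamma = Y(\dot\gamma)$ together with the substitution $\tilde c(s) = \gamma(s/\sqrt{2k})$ reduces to
\[
\nabla_{\dot{\tilde c}}\dot{\tilde c} \;=\; \frac{1}{\sqrt{2k}}\, Y(\dot{\tilde c}), \qquad |\dot{\tilde c}| = 1.
\]
Let $\psi^{k}_{t}$ denote the corresponding unit-speed flow on $SM$. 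Then $R_{k}$ conjugates $\phi^{1}_{t/\sqrt{2k}}|\Sigma_{k}$ to $\psi^{k}_{t}$, and the infinitesimal generator of $\psi^{k}_{t}$ converges in $C^{\infty}$ to the geodesic spray of $(M,g)$ as $k \to \infty$.

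Next, the strict $1/4$-pinching hypothesis on $K$ provides constants $-k_{1}^{2} \le K \le -k_{0}^{2}$ with $k_{1} < 2k_{0}$ strictly, and Example \ref{exa:example sec curv} then shows that the geodesic flow $\psi^{\infty}_{t}$ is Anosov and strictly $1/2$-pinched. Structural stability of Anosov flows (an open condition in the $C^{1}$ topology on the generator) implies that $\psi^{k}_{t}$ is Anosov for all sufficiently large $k$. For the $1/2$-pinching condition $A < 2a$, the $C^{1}$-continuity of the stable and unstable expansion/contraction rates under $C^{1}$-small perturbations of an Anosov flow, together with the openness of the bunching inequalities developed in \cite{Hasselblatt1994a}, shows that strict $1/2$-pinching is also $C^{1}$-open. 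Hence $\psi^{k}_{t}$ is $1/2$-pinched for $k$ sufficiently large, and since $\phi^{1}_{t}|\Sigma_{k}$ differs from $\psi^{k}_{t}$ only by the constant time change $t \mapsto \sqrt{2k}\,t$, both properties transfer back to $\phi^{1}_{t}|\Sigma_{k}$ by the invariance of the Anosov and $1/2$-pinching conditions under time changes.

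The main obstacle is the quantitative step: one needs that the pinching constants $A,a$ of $\psi^{k}_{t}$ converge to those of the geodesic flow so that the strict inequality $A < 2a$ persists uniformly in $k \gg 1$. If a direct appeal to openness of bunching is undesirable, this can be carried out by hand by writing the Jacobi equation along a magnetic geodesic,
\[
\nabla_{t}^{2} J + R(J,\dot c)\dot c - (\nabla_{J} Y)(\dot c) - Y(\nabla_{t} J) = 0,
\]
and rerunning the Rauch-type comparison argument of \cite[Theorem 3.2.17]{Klingenberg1995} used in Example \ref{exa:example sec curv}; the additional magnetic terms contribute at order $O(1/\sqrt{k})$ and do not affect the leading exponential rates in the limit.
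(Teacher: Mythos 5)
Your proposal is correct and is essentially the paper's own argument: your dilation $R_{k}$ is the paper's rescaling $h_{\varepsilon}$ with $\varepsilon=1/\sqrt{2k}$, the Anosov property is obtained from structural stability in both cases, and your ``by hand'' fallback of rerunning the comparison of \cite[Theorem 3.2.17]{Klingenberg1995} for the magnetic Jacobi/Riccati equation, with the magnetic terms of order $O(1/\sqrt{k})$, is exactly how the paper obtains rates $k_{0}(\varepsilon),k_{1}(\varepsilon)$ close to $k_{0}(0),k_{1}(0)$ and hence strict $1/2$-pinching. The only point to phrase carefully is the transfer back to $\phi^{1}_{t}|\Sigma_{k}$: general time changes need not preserve pinching, but here the change is the constant rescaling $t\mapsto\sqrt{2k}\,t$ together with conjugation by the fixed diffeomorphism $R_{k}$, which is all that the paper's definition (existence of \emph{some} spanning vector field with a pinched flow) requires.
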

\begin{proof} First of all we will show that $\left(\Sigma_{k},\omega_{1}\right)$
is an Anosov Hamiltonian structure for $k$ sufficiently large. It
is well known that $\phi_{t}^{0}:SM\rightarrow SM$ is Anosov. By
structural stability of Anosov flows, there exists $\delta_{0}>0$
such that $\phi_{t}^{\varepsilon}:SM\rightarrow SM$ is Anosov for
all $\varepsilon\in(-\delta_{0},\delta_{0})$ (see for instance \cite[Corollary 18.2.2]{KatokHasselblatt1995}).

Consider now the map \[
h_{\varepsilon}:TM\rightarrow TM\,,\, v\mapsto\varepsilon v.\]
 Then the observation is that \[
h_{\varepsilon}^{*}\omega_{\varepsilon}=\varepsilon\omega_{1},\ \ \ h_{\varepsilon}^{*}H=\varepsilon^{2}H.\]
 Now $\phi_{t}^{\varepsilon}:SM\rightarrow SM$ is Anosov if and only
if the flow (temporarily called) $\psi_{t}$ of $h_{\varepsilon}^{*}H$
with respect to $h_{\varepsilon}^{*}\omega_{\varepsilon}$ is Anosov
on $(h_{\varepsilon}^{*}H)^{-1}(1/2)$. But $\psi_{t}$ is the Hamiltonian
flow of $\left(x,v\right)\mapsto\frac{\varepsilon^{2}}{2}\left|v\right|^{2}$
with respect to the symplectic form $\varepsilon\omega_{1}$. But
it is easy to see that $\psi_{t}=\phi_{\varepsilon t}^{1}$; hence
we have shown that \[
\phi_{t}^{\varepsilon}:SM\rightarrow SM\mbox{ is Anosov }\Leftrightarrow\phi_{t}^{1}:\Sigma_{1/2\varepsilon^{2}}\rightarrow\Sigma_{1/2\varepsilon^{2}}\mbox{ is Anosov.}\]
 Thus in particular for \[
k>\frac{1}{2}\delta_{0}^{-2},\]
 $\left(\Sigma_{k},\omega_{1}\right)$ is an Anosov Hamiltonian structure.

In order to prove that $\left(\Sigma_{k},\omega_{1}\right)$ is $1/2$-pinched
for $k$ large we note that an equivalent claim is that $(\Sigma_{1/2},\omega_{\varepsilon\sigma})$
is $1/2$-pinched for small $\varepsilon$. An inspection of the proof
of (\ref{eq:a1}) and (\ref{eq:a2}) in \cite[Theorem 3.2.17]{Klingenberg1995}
(see Example \ref{exa:example sec curv}) shows that if we do the
same analysis for the magnetic Jacobi (or Riccati) equation we obtain
numbers $k_{1}(\varepsilon)$ and $k_{0}(\varepsilon)$ for which
(\ref{eq:a1}) and (\ref{eq:a2}) hold. These numbers will be as close
as we wish to $k_{1}(0)=2$ and $k_{0}(0)=\sqrt{-\max\, K}>1$ if
$\varepsilon$ is small enough and the $1/2$-pinching condition follows
(see Example \ref{exa:example sec curv} again).

\end{proof} From this it is easy to prove Corollary B. Suppose $k$
is chosen large enough such that $(\Sigma_{k},\omega_{1})$ is an
Anosov Hamiltonian structure satisfying the $1/2$-pinching condition,
and suppose for contradiction that $(\Sigma_{k},\omega_{1})$ is stable,
and let $\lambda$ be a stabilizing $1$-form. By Theorem A, $(\Sigma_{k},\omega_{1})$
is contact, that is, $d\lambda=\rho\omega_{1}$ for some non-zero
$\rho\in\mathbb{R}$. In particular, $\omega_{1}$ is exact which
in turn implies that $\sigma$ is exact.

\bibliographystyle{amsplain}

\bibliographystyle{amsplain}
\bibliography{C:/Users/Will/Documents/Lyx/willbibtex}

\end{document}